\def\CC {{\mathbb C}}     
\def\NN {{\mathbb N}}     
\def\PP {{\mathbb P}}     
\def\RR {{\mathbb R}}     
\def\ZZ {{\mathbb Z}}     
\def\lo  {\longmapsto}
\def\Lw  {\Longrightarrow}
\def\lw  {\longrightarrow}
\def\mc {\mathcal}
\def\mk {\mathfrak}
\def\ol  {\overline}
\def\rw  {\rightarrow}
\def\tst {\Longleftrightarrow}
\def\wt  {\widetilde}
\newcommand{\sco}[1]{\left(#1\right)}
\newtheorem{theorem}{Theorem}[section]
\newtheorem{lemma}[theorem]{Lemma}
\newtheorem{prop}[theorem]{Proposition}
\newtheorem{coro}[theorem]{Corollary}
\newtheorem{zab}{Remark}[section]
\newtheorem{df}{Definition}[section]
\newtheorem{ex}{Example}[section]
\begin{document}

\begin{center}

{\LARGE Embeddings of semisimple complex Lie groups and cohomological components of modules}\\

\vspace{0.5cm}
{\large\textsc{Valdemar V. Tsanov}} \footnote{\it Present address: Ruhr-Universit\"at Bochum, Fakult\"at f\"ur Mathematik, NA 4/76, Bochum 44780, Deutschland. Email: Valdemar.Tsanov@rub.de}
\vspace{1cm}

{\it Queen's University, Department of Mathematics and Statistics, Kingston, Ontatio, K7L 3N6, Canada}\\
{\it Email: valdemar.tsanov@gmail.com}\\
\end{center}

\begin{abstract}
Let $G\hookrightarrow\wt{G}$ be an embedding of semisimple complex Lie groups, $B\subset\wt{B}$ a pair of nested Borel subgroups and $G/B\hookrightarrow\wt{G}/\wt{B}$ the associated embedding of flag manifolds. Let $\wt{\mc O}(\wt{\lambda})$ be an equivariant invertible sheaf on $\wt{G}/\wt{B}$ and ${\mc O}(\lambda)$ be its restriction to $G/B$. Consider the $G$-equivariant pullback
$$
\pi^{\wt{\lambda}} : H(\wt{G}/\wt{B},\wt{\mc O}(\wt{\lambda})) \lw H(G/B,{\mc O}(\lambda)) \;.
$$
The Borel-Weil-Bott theorem and Schur's lemma imply that $\pi^{\wt{\lambda}}$ is either surjective or zero. If $\pi^{\wt{\lambda}}$ is nonzero, the image of the dual map $(\pi^{\wt{\lambda}})^*$ is a $G$-irreducible component in a $\wt{G}$-irreducible module, called a cohomological component.

We establish a necessary and sufficient condition for nonvanishing of $\pi^{\wt{\lambda}}$. Also, we prove a theorem on the structure of the set of pairs of dominant weights $(\mu,\wt{\mu})$ with $V(\mu)\subset\wt{V}(\wt{\mu})$ cohomological. Here $V(\mu)$ and $\wt{V}(\wt{\mu})$ denote the respective highest weight modules. Simplified specializations are formulated for regular and diagonal embeddings. In particular, we give an alternative proof of a recent theorem of Dimitrov and Roth. Beyond the regular and diagonal cases, we study equivariantly embedded rational curves and we also show that the generators of the algebra of ad-invariant polynomials on a semisimple Lie algebra can be obtained as cohomological components. Our methods rely on Kostant's theory of Lie algebra cohomology.
\end{abstract}

\section*{Introduction}

The classical Borel-Weil-Bott theorem provides realizations for the irreducible finite dimensional modules of a semisimple complex Lie group $G$ as cohomology spaces associated to irreducible homogeneous vector bundles on the flag manifolds of $G$. We address the ``functorial'' behavior of this construction, i.e. we consider equivariant embeddings of flag manifolds and the resulting pullbacks of cohomology. In this paper, we concentrate on the case of complete flag manifolds and cohomology of line bundles. The study of vector bundles on partial flag manifolds will be carried out in a future work.

Let us start by introducing some basic notation and then proceed to discuss the objective of this paper. We assume that $G$ is connected and simply connected. Let $B\subset G$ be a Borel subgroup and $H\subset B$ be a Cartan subgroup. Let ${\mc P}$, $\Delta$, ${\mc W}$ denote respectively the weight lattice, root system and Weyl group of $G$ with respect to $H$. Let ${\mc P}^+$, $\Delta^+$, $\Delta^-$, $\Pi$ denote respectively the sets of dominant weights, positive roots, negative roots and simple roots of $G$ with respect to $B$. Let $\rho=\frac{1}{2}\langle\Delta^+\rangle$, where $\langle\Phi\rangle$ denotes the sum of the elements of a finite subset $\Phi\subset{\mc P}$. To each element $w\in{\mc W}$ is associated its inversion set $\Phi_w=\Delta^+\cap w^{-1}(\Delta^-)$, which in fact characterizes the element. The length $l(w)$, i.e. the number of simple reflections in a minimal expression for $w$, equals the cardinality of $\Phi_w$. Besides the linear action of ${\mc W}$ on ${\mc P}$, denoted by $w(\lambda)$, we consider the affine action defined by $w\cdot\lambda=w(\lambda+\rho)-\rho$. We record for future reference the following formulae relating the two actions of ${\mc W}$ on ${\mc P}$:
\begin{gather}\label{For w^-1(w.lambda)}
w^{-1}(w\cdot\lambda)=\lambda-w^{-1}\cdot 0 \quad,\quad w^{-1}\cdot 0=-\langle\Phi_{w}\rangle \;.
\end{gather}
A weight $\lambda\in{\mc P}$ is called regular if it is congruent to a dominant weight under the affine action of ${\mc W}$. In such a case, the relevant Weyl group element, which is unique, is denoted by $w_\lambda$, so that $w_\lambda\cdot\lambda\in{\mc P}^+$. A weight which is not regular is called singular. For regular weights, we define length as $l(\lambda)=l(w_\lambda)$. For singular weights the length function is not defined; whenever we write $l(\lambda)$ we make the implicit assumption that $\lambda$ is regular. If $V$ is a $G$-module, its weight spaces are denoted by $V^\lambda$ for $\lambda\in{\mc P}$; a generic weight vector is denoted by $v^\lambda$ (weight vectors are assumed to be non-zero). For $\lambda\in{\mc P}^+$, let $V(\lambda)$ denote an irreducible $G$-module with highest weight $\lambda$; the weights of the form $w(\lambda)$, $w\in{\mc W}$ are called the extreme weights of $V(\lambda)$; the corresponding weight vectors $v^{w(\lambda)}$ are called extreme weight vectors. Dual spaces and maps are denoted by a subscript $^*$. We use the following common convention: lower case German letters denote Lie algebras of Lie groups denoted by the corresponding capital Italic letters. The universal enveloping algebra of a Lie algebra ${\mk a}$ is denoted by ${\mk U}(\mk a)$. When discussing cohomology, we denote by $H(--)$ the total cohomology group $\oplus_q H^q(--)$.

Let $X=G/B$ denote the flag manifold of $G$. The $G$-equivariant line bundles on $X$ are parametrized by the characters of $B$, which are in turn parametrized by ${\mc P}$. Let $\CC_\lambda$ denote a one dimensional $B$-module with character $\lambda\in{\mc P}$. Let ${\mc O}(\lambda)$ denote the sheaf of local holomorphic sections of the line bundle ${\mc L}_{-\lambda}=G\times_B\CC_{-\lambda}$. We can now formulate

\begin{theorem}{\rm (Borel-Weil-Bott, \cite{Bott})} Let $\lambda\in{\mc P}$. Then
$$
H^{q}(X,{\mc O}(\lambda)) \cong \begin{cases} V(w_\lambda\cdot\lambda)^* \quad &,\; if\;\; l(\lambda)=q \\
                                               0 &,\; otherwise \;.
                                \end{cases}
$$
\end{theorem}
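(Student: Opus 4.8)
The plan is to derive the theorem from two facts that fit the framework of this paper: the reciprocity law (\ref{For Bott Recip}) and Kostant's description of $\mk n$-cohomology (theorem \ref{Theo Kostant's}). First I would rewrite (\ref{For Bott Recip}) as the statement that, for each $\mu\in\mc P^{+}$,
$$
\dim{\rm Hom}_{G}\sco{V(\mu)^{*},\,H^{q}(X,\mc O(\lambda))}=\dim{\rm Hom}_{\mk h}\sco{\CC_{\lambda},\,H^{q}(\mk n,V(\mu))}\;,
$$
and then feed in Kostant's identification of the $\mk h$-module $H^{q}(\mk n,V(\mu))$ with $\bigoplus_{l(w)=q}\CC_{w\cdot\mu}$. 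The right-hand side then counts the $w\in\mc W$ with $l(w)=q$ and $w\cdot\mu=\lambda$; since the affine $\mc W$-orbit of a weight meets $\mc P^{+}$ in at most one point, this count is $1$ exactly when $\lambda$ is regular with $\mu=w_{\lambda}\cdot\lambda$ and $q=l(w_{\lambda})=l(\lambda)$, and $0$ otherwise. Because each $H^{q}(X,\mc O(\lambda))$ is a finite-dimensional $G$-module, knowing the multiplicity of every irreducible determines it, yielding $H^{q}(X,\mc O(\lambda))\cong V(w_{\lambda}\cdot\lambda)^{*}$ when $l(\lambda)=q$ and $0$ otherwise.

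For a proof that does not reach forward to theorem \ref{Theo Kostant's}, I would instead run Demazure's geometric argument. For a simple root $\alpha$, consider the $\PP^{1}$-bundle $\pi_{\alpha}:G/B\lw G/P_{\alpha}$ over the associated minimal partial flag manifold; on a fibre, $\mc O(\lambda)$ restricts to a line bundle of degree $d=\scal{\lambda,\alpha^{\vee}}$ (for the normalization making $\mc O(\lambda)$ effective when $\lambda$ is dominant). Cohomology and base change along $\pi_{\alpha}$, together with the cohomology of $\mc O_{\PP^{1}}(d)$, then give: for $d\geq0$, $R^{>0}\pi_{\alpha*}\mc O(\lambda)=0$ and $H^{q}(G/B,\mc O(\lambda))\cong H^{q}(G/P_{\alpha},\pi_{\alpha*}\mc O(\lambda))$; for $d=-1$, $R\pi_{\alpha*}\mc O(\lambda)=0$, so $H^{\bullet}(G/B,\mc O(\lambda))=0$; for $d\leq-2$, $R^{0}\pi_{\alpha*}\mc O(\lambda)=0$ and $R^{1}\pi_{\alpha*}\mc O(\lambda)\cong\pi_{\alpha*}\mc O(s_{\alpha}\cdot\lambda)$, so $H^{q}(G/B,\mc O(\lambda))\cong H^{q-1}(G/B,\mc O(s_{\alpha}\cdot\lambda))$. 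I would take Borel--Weil as the base case ($H^{0}(X,\mc O(\lambda))\cong V(\lambda)^{*}$ and no higher cohomology for $\lambda\in\mc P^{+}$, the vanishing e.g. by Kodaira after descending to the partial flag manifold on which $\lambda$ is regular) and then induct on $l(\lambda)$: a regular non-dominant $\lambda$ has, by regularity, some simple $\alpha$ with $d\leq-2$, and the third case lowers $l(\lambda)$ by one while shifting the cohomological degree by one, so after $l(\lambda)$ steps one lands on the dominant base case; a non-regular $\lambda$ should, by the analogous reductions, be driven to a fibre-degree $-1$ situation and hence have no cohomology.

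I expect the main obstacle to depend on the route. In the reciprocity route it is entirely Kostant's theorem: once that is in hand, the passage to Borel--Weil--Bott is a formal bookkeeping with the Weyl group. In the geometric route the work is the cohomology-and-base-change analysis on the $\PP^{1}$-bundles and, more delicately, verifying that the induction terminates as claimed --- in particular that a singular weight really is carried, through valid reduction steps, to a degree-$(-1)$ fibre situation.
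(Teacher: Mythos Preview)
The paper does not supply its own proof of this theorem: it is stated as a classical result with a citation to \cite{Bott}, and the surrounding text merely remarks that the Borel--Weil--Bott theorem computes the left-hand side of (\ref{For Bott Recip}) while Kostant's theorem computes the right. There is therefore no ``paper's proof'' to compare your proposal against.

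Your first route --- combine the reciprocity law (\ref{For Bott Recip}) with Kostant's theorem \ref{Theo Kostant's} --- is correct and is precisely the derivation the paper's own framework invites; the paper even says as much in the sentence just before theorem \ref{Theo Kostant's}. There is no circularity: the reciprocity (\ref{For Bott Recip}) is obtained from the identification of Dolbeault cohomology with Lie algebra cohomology, independently of Borel--Weil--Bott, and Kostant's argument for theorem \ref{Theo Kostant's} is likewise independent. One small bookkeeping remark: the $w$ you find satisfies $w\cdot\mu=\lambda$ with $\mu$ dominant, hence $w^{-1}=w_{\lambda}$ rather than $w=w_{\lambda}$; this is harmless since $l(w)=l(w^{-1})$ and your stated conclusions on $\mu$ and $q$ are unaffected.

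Your second route (Demazure's $\PP^{1}$-bundle induction) is a standard alternative and also valid; your caveat about the singular case is the right place to be careful, but the argument does go through.
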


Suppose that $G$ is embedded as a subgroup into another semisimple complex Lie group $\wt{G}$. The group $B$, being a solvable subgroup of $\wt{G}$, can be embedded into a Borel subgroup $\wt{B}\subset\wt{G}$. Put $\wt{X}=\wt{G}/\wt{B}$. For convenience, we also fix Cartan subgroups $H\subset B$ and $\wt{H}\subset\wt{B}$ with $H=B\cap\wt{H}$. Notice however, that many of the concepts introduced below depend only on the choice of Borel subgroups. Let $\iota^*:\wt{\mc P}\rw{\mc P}$ denote the restriction of weights. There is an equivariant embedding of flag manifolds
$$
\varphi: X \hookrightarrow \wt{X} \;.
$$
Given an invertible sheaf $\wt{\mc O}(\wt{\lambda})$ on $\wt{X}$, it restricts to an invertible sheaf ${\mc O}(\lambda)$ on $X$, with $\lambda=\iota^*(\wt{\lambda})$. There is a $G$-equivariant pullback map
$$
\pi^{\wt{\lambda}}: H(\wt{X},\wt{\mc O}(\wt{\lambda})) \lw H(X,{\mc O}(\lambda)) \;.
$$
By the Borel-Weil-Bott theorem, the domain space of $\pi^{\wt{\lambda}}$ is an irreducible $\wt{G}$-module, $\wt{V}(\wt{w}\cdot\wt{\lambda})^*$, and the target space is an irreducible $G$-module, $V(w\cdot\lambda)^*$. By Schur's lemma, $\pi^{\wt{\lambda}}$ is either surjective or zero. When $\pi^{\wt{\lambda}}$ is nonzero, its dual is a $G$-monomorphism $(\pi^{\wt{\lambda}})^*:V(w\cdot\lambda)\hookrightarrow\wt{V}(\wt{w}\cdot\wt{\lambda})$. This brings us to the central notion considered in this paper.

\begin{df} Given an irreducible $\wt{G}$-module $\wt{V}$, an irreducible $G$-submodule $V\subset\wt{V}$ is called a {\rm cohomological component}, if it can be realized as the image of $(\pi^{\wt{\lambda}})^*$ for an appropriate $\wt{\lambda}$. The {\rm set of cohomological pairs of dominant weights} is defined as
\begin{gather}\label{For C in intro}
{\mc C} = {\mc C}(\varphi) = \{ (\mu,\wt{\mu})\in{\mc P}^+\times\wt{\mc P}^+ \;:\; V(\mu)\subset\wt{V}(\wt{\mu}) \; {\rm cohomological} \} \;.
\end{gather}
\end{df}

The construction of cohomological components could be termed as ``geometric branching'' of representations, versus the general ``algebraic branching''. Here are three problems arising naturally after the above definition.\\

\noindent{\bf Problem I}: {\it Find a criterion for nonvanishing of the pullback $\pi^{\wt{\lambda}}$.}\\
\noindent{\bf Problem II}: {\it Characterize the cohomological components of a given irreducible $\wt{G}$-module.}\\
\noindent{\bf Problem III}: {\it Describe the set ${\mc C}$ of cohomological pairs of dominant weights.}\\

The goal of this paper is to address these problems. We present a complete solution to Problem I. The resulting criterion actually gives a solution to Problem II as well. Problem III is addressed in Section \ref{Sec PropCohCom}. A complete solution is not given, but some important structural properties of the set ${\mc C}$ are established, reducing Problem III to a saturation problem for some finitely generated submonoids of ${\mc P}^+\times\wt{\mc P}^+$.\\

The initial motivation for this study came after the work of Dimitrov and Roth, \cite{Dimitrov-Roth short}, \cite{Dimitrov-Roth long}, where the above problems were posed and solved for diagonal embeddings. It is in this work that the notion of a cohomological component was originally introduced. In the case of a diagonal embedding, $X\hookrightarrow X\times X$, K\"unneth's formula allows an interpretation of the pullback as a cup product map
$$
\pi^{(\lambda_1,\lambda_2)}: H^{q_1}(X,{\mc O}(\lambda_1)) \otimes H^{q_2}(X,{\mc O}(\lambda_2)) \lw H^{q_1+q_2}(X,{\mc O}(\lambda)) \;,
$$
where $\lambda=\lambda_1+\lambda_2$. When the dual map $(\pi^{(\lambda_1,\lambda_2)})^*$ is nonzero, its image is an irreducible component $V(w_\lambda\cdot\lambda)$ of the tensor product $V(w_{\lambda_1}\cdot\lambda_1)\otimes V(w_{\lambda_2}\cdot\lambda_2)$. This makes the diagonal case particularly interesting, as it relates to Littlewood-Richardson theory. Dimitrov and Roth found the following solution to Problem I.

\begin{theorem}{\rm (Dimitrov-Roth, \cite{Dimitrov-Roth long})}
The cup product map $\pi^{(\lambda_1,\lambda_2)}$ is nonzero if and only if
\begin{gather}\label{For Cond D-R}
\Phi_{w_{_{\lambda_1}}} \sqcup \Phi_{w_{_{\lambda_2}}} = \Phi_{w_{_\lambda}} \;.
\end{gather}
\end{theorem}

The methods employed in \cite{Dimitrov-Roth long} for the proof of the above criterion for nonvanishing are algebro-geometric. However, condition (\ref{For Cond D-R}) is Lie theoretic in nature and it seemed desirable to have a proof and ``explanation'' for this criterion in Lie theoretic terms. The necessary framework is readily available in Kostant's fundamental paper on Lie algebra cohomology, \cite{Kostant}, and turns out to be sufficiently flexible to allow treatment of the general case. Kostant's theorem (Th. 5.14 in \cite{Kostant}, presented here as Th. \ref{Theo Kostant's}) is a Lie-algebraic counterpart to the Borel-Weil-Bott theorem and has a particularly strong feature: explicit harmonic representatives.

In Theorem \ref{Theo Necessity} of the present paper we provide a complete solution of problem I. The criterion can be formulated briefly, using Kostant's harmonics, as follows:\\

{\it The pullback $\pi^{\wt{\lambda}}$ is non-zero if and only if it can be realized on harmonic representatives.}\\

Kostant's harmonics are very explicit and the above criterion can be formulated in concrete terms. This is the content of Theorem \ref{Theo Necessity}, where we present two conditions, (i) and (ii), which are necessary and, together, sufficient for the nonvanishing of $\pi^{\wt{\lambda}}$. The first one is an analogue of (\ref{For Cond D-R}) above, refers only to the Weyl group elements $\wt{w}$ and $w$, and is easy to verify for a given pair $\wt{w},w$. The second one refers to the $G$-module structure of the cohomology group $H(\wt{X},\wt{\mc O}(\wt{w}\cdot\wt{\lambda}))$, and is considerably more difficult to verify. The good news is that the first condition is quite restrictive already and, in several important cases, automatically implies the second. However, we show that this implication does not hold in general. A simple counter-example is supplied via the adjoint representation of $SL_2$ (see Theorem \ref{Theo principal}).

Condition (ii) of Theorem \ref{Theo Necessity} provides an answer to problem II: a component $V(\mu)\subset\wt{V}(\wt{\mu})$ is cohomological if and only if there exists a pair of Weyl group elements $w,\wt{w}$ satisfying (i), for which the extreme weight vector $v^{w^{-1}(\mu)}\in V(\mu)$ is not orthogonal to the extreme weight vector $\wt{v}^{\wt{w}^{-1}(\wt{\mu})}$ with respect to any $\wt{K}$-invariant Hermitian form on $\wt{V}(\wt{\mu})$. Here $\wt{K}$ denotes a maximal compact subgroup of $\wt{G}$ containing a maximal compact subgroup of $G$. An alternative formulation of (ii), with more geometric flavor, is given in Theorem \ref{Theo Necessity}$'$. Although complete, this solution to problem II is not completely satisfactory, since the relevant property can be hard to check. As already mentioned, a good characterization of cohomological components is known for special classes of embeddings. Dimitrov and Roth proved in \cite{Dimitrov-Roth long} that, in the case of diagonal embeddings, the generic cohomological components are generalized PRV components of stable multiplicity 1 and vice versa. In Theorem \ref{Theo Regular embedding} we show that, for regular embeddings, the cohomological components are those generated by common highest weight vectors for $B$ and $\wt{B}$ (i.e. the highest components). In Theorem \ref{Theo principal} we give an answer to problem II for the case of principal rational curves. It turns out that the list is very small in this case: the cohomological components are either highest components obtained via pullbacks of global sections, or trivial modules obtained in degree 1 of cohomology.

In Theorem \ref{Theo C_w,wt(w) monoid} we establish some structural properties of the set ${\mc C}$, making a first step towards a solution of problem III. For a pair of Weyl group elements $w,\wt{w}$ satisfying condition (i) of Theorem \ref{Theo Necessity}, we denote
$$
{\mc D}_{w,\wt{w}} = \{ (\mu,\wt{\mu})\in{\mc P}^+\times\wt{\mc P}^+ \;:\; \wt{w}^{-1}\cdot\wt{\mu} \stackrel{\iota^*}{\lo} w^{-1}\cdot\mu \}\;.
$$
This set is a submonoid of the dominant monoid ${\mc P}^+\times\wt{\mc P}^+$. It consists, in view of Theorem \ref{Theo Necessity}, of all pairs $(\mu,\wt{\mu})$ such that $V(\mu)$ could be a cohomological component of $\wt{V}(\wt{\mu})$ associated with the given pair $w,\wt{w}$. The set of cohomological pairs introduced above can be written as
$$
{\mc C} = \bigcup {\mc C}_{w,\wt{w}} \;, \quad {\rm where} \quad {\mc C}_{w,\wt{w}} = {\mc C} \cap {\mc D}_{w,\wt{w}}
$$
and the union is along all pairs $w,\wt{w}$ satisfying (i). Theorem \ref{Theo C_w,wt(w) monoid} asserts that $C_{w,\wt{w}}$ is a submonoid of ${\mc D}_{w,\wt{w}}$ and, furthermore, there exists a positive integer $k$ such that for any $(\mu,\wt{\mu})\in{\mc D}_{w,\wt{w}}$ we have $(k\mu,k\wt{\mu})\in{\mc C}_{w,\wt{w}}$. Various situations occur: the complement ${\mc D}_{w,\wt{w}}\setminus{\mc C}_{w,\wt{w}}$ can be finite, infinite, or empty. A variety of examples is supplied by the results of Section \ref{Sec Invariants}.

The paper consists of three parts. Part 1 summarizes the necessary background. The basic goal here is to state Kostant's theorem on Lie algebra cohomology (Theorem \ref{Theo Kostant's} in the text), and sketch its connection to the Borel-Weil-Bott theorem. Part 2 contains a detailed setting of the situation we are interested in, and the main results. In Section \ref{Sec Pullbacks and CohCom} we prove Theorem \ref{Prop Translate}, which asserts that the nonvanishing of $\pi^{\wt{\lambda}}$ can be studied in either of the three cohomology theories: sheaf cohomology, Dolbeault cohomology, and Lie algebra cohomology. Section \ref{Sec Criterion} contains the criterion for nonvanishing of $\pi^{\wt{\lambda}}$ (Theorem \ref{Theo Necessity}) and some related results and remarks. Section \ref{Sec PropCohCom} contains the results on the structure of ${\mc C}$, notably Theorem \ref{Theo C_w,wt(w) monoid}. Part 3 contains applications, specializations and examples. Sections \ref{Sec Regular} and \ref{Sec Diagonal} are devoted to two important special classes of embeddings. Section \ref{Sec Regular} deals with regular embeddings; the results here are new and although one finds nothing exotic or unexpected, this case is arguably important for the general picture. Section \ref{Sec Diagonal} deals with diagonal embeddings and contains an alternative proof of the nonvanishing criterion of Dimitrov and Roth. In Section \ref{Sec RationalCurves} we study homogeneous rational curves in flag manifolds. We give a complete description of the cohomological components or the case of curves associated with principal $\mk{sl}_2$ subalgebras. With this result and the result on regular embeddings in hand, we can access a larger class of rational curves via factorizations of embeddings. Finally, in Section \ref{Sec Invariants} we study cohomological components arising from the adjoint representation of a semisimple complex Lie algebra ${\mk g}$ and show that a full set of generators for the algebra of ad-invariant polynomials on ${\mk g}$ can be obtained as cohomological components.

\section{Preliminaries}\label{Sec Preliminar}

\subsection{Semisimple Lie algebras and Lie groups}\label{Par SSLieAlg}

Let ${\mk g}$ be a semisimple complex Lie algebra and $G$ be the associated connected, simply connected, complex Lie group. Let $\kappa$ denote the Killing form on ${\mk g}$. We will use the same notation for the restrictions of $\kappa$ to subalgebras, some extensions, and the induced bilinear forms on dual spaces. In particular, $\kappa$ extends to a nondegenerate bilinear form on the Grassmann algebra $\Lambda{\mk g}$, where the degree components are declared to be mutually orthogonal and on pure tensors we have $\kappa(x_1\wedge...\wedge x_q,y_1\wedge...\wedge y_q)=\det\kappa(x_j,y_k)$. Thus $\kappa$ yields an algebra isomorphism $\Lambda{\mk g}\cong\Lambda{\mk g}^*$.

Let $K\subset G$ be a maximal compact subgroup and ${\mk k}\subset{\mk g}$ be the corresponding compact real form, so that ${\mk g}={\mk k}\oplus i{\mk k}$. The restriction of $\kappa$ to ${\mk k}$ is negative definite and coincides with the Killing form of $\mk{k}$. Put ${\mk q}=i{\mk k}$, so that ${\mk g}={\mk q}\oplus i{\mk q}$, and $\kappa$ is positive definite on ${\mk q}$. A conjugation in ${\mk g}$ is defined by
$$
\ol{x+iy}=x-iy \quad,\quad x,y\in{\mk q} \;.
$$
In turn, this conjugation defines a positive definite Hermitian form $\{.,.\}$ on ${\mk g}$ by
$$
\{ x,y \} = \kappa(x,\ol{y}) \quad,\quad x,y\in{\mk g} \;.
$$
Since $\Lambda{\mk g}=\Lambda_{\RR}{\mk q}\oplus_{\RR} i \Lambda_{\RR}{\mk q}$, both the conjugation and the Hermitian form extend to $\Lambda{\mk g}$. The following proposition summarizes some facts from \cite{Kostant}, $\S$ 3.

\begin{prop}\label{Prop ol(a)=a*} Let ${\mk a}$ be any subalgebra of ${\mk g}$.

{\rm (i)} Then $\ol{\mk a}=\{\ol{a}\in{\mk g} : a\in{\mk a}\}$ is a subalgebra of ${\mk g}$.

{\rm (ii)} The map $\ol{\mk a}\rw{\mk a}^*$, $x\mapsto \xi_x$ defined by $\xi_x(a)=\kappa(a,x)$, for $a\in{\mk a}$, is an isomorphism of vector spaces, which extends naturally to a graded isomorphism of the Grassmann algebras $\Lambda\ol{\mk a}\cong \Lambda{\mk a}^*$.

{\rm (iii)} The pullback of $\{.,.\}$ along $\Lambda{\mk a}^* \cong \Lambda\ol{\mk a} \hookrightarrow \Lambda{\mk g}$ is a positive definite Hermitian form on $\Lambda{\mk a}^*$.
\end{prop}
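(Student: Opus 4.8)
The plan is to handle the three parts in order; the only real input needed is the behaviour of the conjugation $\ol{\cdot}$ with respect to the bracket. Since $\mk q=i\mk k$ is \emph{not} a subalgebra of $\mk g$, the conjugation $\ol{\cdot}$ is not a Lie algebra automorphism, so the first thing I would do is relate it to the conjugation $\tau$ of $\mk g$ with respect to the compact real form $\mk k$, which \emph{is} a conjugate-linear automorphism (because $\mk k$ is a real subalgebra with $\mk g=\mk k\oplus i\mk k$): one checks $\ol z=-\tau(z)$ for all $z\in\mk g$, whence
$$
\ol{[x,y]}=-\tau[x,y]=-[\tau x,\tau y]=-[\ol x,\ol y]=[\ol y,\ol x].
$$
Part (i) then follows at once: $\ol{\mk a}$ is the image of the complex subspace $\mk a$ under the conjugate-linear bijection $\ol{\cdot}$, hence a complex subspace, and for $a_1,a_2\in\mk a$ the displayed identity gives $[\ol{a_1},\ol{a_2}]=-\ol{[a_1,a_2]}\in\ol{\mk a}$, so $\ol{\mk a}$ is closed under the bracket.

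For part (ii), observe that $x\mapsto\xi_x$ is the composite of the Killing isomorphism $\mk g\to\mk g^*$, $x\mapsto\kappa(\cdot,x)$, restricted to $\ol{\mk a}$, with the restriction map $\mk g^*\to\mk a^*$; it is therefore linear, and it is an isomorphism exactly when $\kappa$ restricts to a perfect pairing $\mk a\times\ol{\mk a}\to\CC$. That it does is immediate from the identity $\kappa(a,\ol b)=\{a,b\}$ together with positive-definiteness of $\{.,.\}$ on $\mk a$: for $0\neq a\in\mk a$ one has $\xi_{\ol a}(a)=\{a,a\}>0$, and for $0\neq x=\ol b\in\ol{\mk a}$ one has $\xi_{x}(b)=\{b,b\}>0$, so the pairing is nondegenerate on both sides, hence perfect by a dimension count. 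To pass to Grassmann algebras I would pick $\kappa$-dual bases $e_1,\dots,e_n$ of $\mk a$ and $f_1,\dots,f_n$ of $\ol{\mk a}$ (so $\kappa(e_i,f_j)=\delta_{ij}$), which exist by the previous sentence; the determinant formula defining $\kappa$ on $\Lambda\mk g$ then shows $\kappa(e_{i_1}\wedge\cdots\wedge e_{i_q},\,f_{j_1}\wedge\cdots\wedge f_{j_q})=\delta_{IJ}$ for sorted multi-indices $I,J$, so $\kappa$ restricts to a perfect pairing $\Lambda^q\mk a\times\Lambda^q\ol{\mk a}\to\CC$ for each $q$. Combined with the canonical identification $\Lambda(\mk a^*)\cong(\Lambda\mk a)^*$ valid in characteristic zero — already invoked for $\mk g$ itself — this yields the asserted graded isomorphism $\Lambda\ol{\mk a}\cong\Lambda\mk a^*$, visibly the restriction of the one on $\mk g$.

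Part (iii) is then immediate: since $\ol{\mk a}$ is a complex subspace of $\mk g$, the inclusion induces an inclusion of graded subalgebras $\Lambda\ol{\mk a}\hookrightarrow\Lambda\mk g$, and the restriction of a positive definite Hermitian form to a subspace is again positive definite Hermitian; transporting along the isomorphism of (ii) produces the required form on $\Lambda\mk a^*$. The one place that genuinely needs care — and essentially the only potential pitfall in the whole argument — is the sign in the bracket identity for $\ol{\cdot}$, precisely because $\mk q=i\mk k$ is not a subalgebra; everything past that is bookkeeping. I would also record, for use later in the paper, that the same computation shows $\ol{\cdot}$ sends $\mk a$-stable subspaces of $\mk g$ to $\ol{\mk a}$-stable subspaces.
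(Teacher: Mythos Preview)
Your argument is correct. The paper itself does not supply a proof of this proposition; it simply records it as a summary of facts from \cite{Kostant}, \S3, so there is nothing in the text to compare against beyond the citation. Your identification $\ol{z}=-\tau(z)$, where $\tau$ is the Cartan conjugation fixing $\mk k$, is exactly the right device to get the bracket identity $\ol{[x,y]}=-[\ol{x},\ol{y}]$, and after that the remaining parts are, as you say, bookkeeping with the perfect pairing and the determinant formula. This is essentially the standard argument one finds in Kostant's paper.
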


Recall that in the introduction we have fixed a Cartan and a Borel subgroups of $G$. We may, and do, assume that $T=K\cap H$ is a maximal torus in $K$. We have the corresponding subalgebras ${\mk h}\subset{\mk b}\subset{\mk g}$ and a root space decomposition
$$
{\mk g}={\mk h}\oplus(\oplus_{\alpha\in\Delta}{\mk g}^\alpha)
$$
The restriction of $\kappa$ to ${\mk h}$ provides an isomorphism between ${\mk h}^*$ and ${\mk h}$, denoted by $\mu\mapsto h_\mu$. In particular, we have the coroots $h_{\alpha}$, for $\alpha\in\Delta$. We fix root vectors $e_{\alpha}\in{\mk g}^\alpha$, for $\alpha\in\Delta$, satisfying $[e_\alpha,e_{-\alpha}]=h_\alpha$ and
\begin{equation}\label{For OrthRel e_alf}
\kappa(e_\alpha,e_\beta)=\begin{cases} 1 \;\;,\; {\rm if}\;\; \alpha=-\beta\;,\\
                                 0 \;\;,\; {\rm if}\;\; \alpha\ne-\beta\;.
                   \end{cases}
\end{equation}
One verifies immediately that
\begin{gather}\label{For ol(e_alp)=e_-alp}
\ol{e_\alpha}=e_{-\alpha}\;.
\end{gather}
Denote ${\mk n}=[{\mk b},{\mk b}]$; this is the nil-radical of ${\mk b}$. Also, set ${\mk n}^-=\ol{\mk n}$. Then
$$
{\mk n}={\mk n}^{+}=\oplus_{\alpha\in\Delta^{+}}{\mk g}^{\alpha} \quad,\quad {\mk n}^{-}=\oplus_{\alpha\in\Delta^{-}}{\mk g}^{\alpha} \;.
$$

Following Dynkin, we call a subalgebra ${\mk a}$ of ${\mk g}$ regular with respect to the Cartan subalgebra ${\mk h}$, if ${\mk a}$ is also an ${\mk h}$-submodule of ${\mk g}$ with respect to the adjoint action. In such a case, we set $\Delta(\mk a)=\{a\in\Delta \vert {\mk g}^{\alpha}\subset{\mk a}\}$. A subalgebra ${\mk a}\subset{\mk g}$ ia called regular, if it is regular with respect to some Cartan subalgebra. In particular, any subalgebra of ${\mk g}$ containing ${\mk h}$ is a regular subalgebra. The following proposition is straightforward to verify.

\begin{prop}\label{Prop ol(a)=a* regular} Let ${\mk a}$ be a regular subalgebra of ${\mk g}$ with respect to ${\mk h}$.

{\rm (i)} $\Delta(\ol{\mk a})=-\Delta(\mk a)$.

{\rm (ii)} The isomorphism $\ol{\mk a}\cong{\mk a}^*$ and the monomorphism $\Lambda{\mk a}^*\hookrightarrow\Lambda{\mk g}$ defined in Proposition \ref{Prop ol(a)=a*} are morphisms of ${\mk h}$-modules.

{\rm (iii)} If ${\mk a}\cap{\mk h}=0$, then $\{ e_{\Phi}=\wedge_{\alpha\in\Phi}e_{\alpha} \; \vert \; \Phi\subset\Delta(\mk a) \}$ is a basis of weight vectors for the ${\mk h}$-module $\Lambda{\mk a}$.
\end{prop}

\subsection{Flag manifolds and homogeneous vector bundles}\label{Sec Flags and Bundles}

Recall that $X=G/B$ is the flag manifold of $G$. Put $x_o=eB\in X$. A holomorphic vector bundle ${\mc E}\rw X$ is said to be homogeneous (or more precisely $G$-homogeneous), if the total space ${\mc E}$ caries a $G$-action such that the map ${\mc E}\rw X$ is $G$-equivariant and for any $g\in G$, $x\in X$ the resulting map between the fibres $E_x\rw E_{gx}$ is linear. Let $E_o$ be the fibre over $x_o$. Then $B$ acts linearly on $E_o$ and we have
$$
{\mc E}\cong G\times_B E_o = (G\times E_o)/((gb,v)\sim(g,bv))\;.
$$

The action of the compact group $K$ on $X$ is also transitive (cf. \cite{Wallach HarmHomSp}). The intersection $T=K\cap B=K\cap H$ is a maximal abelian subgroup of $K$ and the flag manifold can be written as $X=K/T$. Let $p:K\rw X$, $g\mapsto gx_o$ be the canonical map. For $g\in K$ let $p_g^\CC:T_g^\CC K\rw T_{gx_o}^\CC X$ be the tangent map of complexified tangent spaces. By definition ${\mk g}=T_e^\CC K$. Thus
$$
{\rm Ker}(p_e^\CC)={\mk h} \quad,\quad {\rm Im}(p_e^\CC)=T_{x_o}^\CC X\cong{\mk n}^-\oplus{\mk n} \;.
$$
Let $g\in K$ and let ${\mk g}$ be identified with $T_g^\CC K$ via the tangent map of the right translation by $g$. The restriction of $p_g^\CC$ to ${\mk n}$ (respectively, ${\mk n}^-$) is a $T$-module isomorphism onto $T_{gx_o}^{0,1}X$ (respectively, $T_{gx_o}^{1,0}X$). The holomorphic and the antiholomorphic tangent bundles on $X$ are $K$-homogeneous unitary vector bundles and we have
\begin{gather}\label{For n=T(0,1)}
T^{1,0}X=K\times_{T}{\mk n}^- \quad , \quad T^{0,1}X=K\times_{T}{\mk n} \;.
\end{gather}
In what follows we will make use of the Dolbeault $\ol{\partial}$-complex on $X$, hence we are specifically interested in antiholomorphic differential forms. The second identity in (\ref{For n=T(0,1)}) implies that the antiholomorphic cotangent bundle $\Omega^{0,1}X$ can be written as $K\times_{T}{\mk n}^*$, and more generally
$$
\Omega^{0,q}X=K\times_{T} (\Lambda^q{\mk n}^*) \;.
$$
The space of differentiable sections is
$$
C^{0,1}(X)=(C^{\infty}(K)\otimes {\mk n}^*)^{T} \;,
$$
where $T$ acts on $C^{\infty}(K)$ on the right.

\subsubsection{Sheaf cohomology and Lie algebra cohomology}\label{Sec Cohomologies}

A classical theorem of Lie states that the irreducible representations of solvable Lie groups (over $\CC$) are one-dimensional. Hence the only irreducible homogeneous holomorphic vector bundles on $X$ are line bundles. A homogeneous holomorphic line bundle on $X$ corresponds to a character of $B$. Since the character group of $B$ is identified with the weight lattice ${\mc P}$, and any holomorphic line bundle on a flag manifold is linearly equivalent to a homogeneous one, we have ${\rm Pic}(X)\cong {\mc P}$. In the introduction, we gave the notation ${\mc L}_\lambda= G\times_B \CC_\lambda$ for $\lambda\in{\mc P}$. The sheaf of local holomorphic sections of ${\mc L}_{-\lambda}$ was denoted by ${\mc O}(\lambda)$. The homogeneity of ${\mc L}_{-\lambda}$ yields a representation of $G$ on the cohomology space $H^q(X,{\mc O}(\lambda))$. All such representations are described by the Borel-Weil-Bott theorem formulated in the introduction.

The line bundle ${\mc L}_{-\lambda}\rw X$ can be written both as $G\times_{B}\CC_{-\lambda}$ and $K\times_{T}\CC_{-\lambda}$. We can form the Dolbeault $\ol{\partial}$-complex $C_{\ol{\partial}}(X,{\mc L}_{-\lambda})$ of antiholomorphic forms with values in ${\mc L}_{-\lambda}$. These forms can be lifted to forms on $K$, with the suitable invariance properties. First, we have the bundle
$$
\Omega^{0,q}(X,{\mc L}_{-\lambda})=K\times_{T} (\Lambda^q{\mk n}^* \otimes \CC_{-\lambda}) \;,
$$
whose space of differentiable sections is $C^{0,q}(X,{\mc L}_{-\lambda})=(C^{\infty}(K)\otimes \Lambda^q{\mk n}^* \otimes \CC_{-\lambda})^{T}$, which can also be written as ${\rm Hom}_{\mk h}(\Lambda^q{\mk n},C^{\infty}(K)\otimes \CC_{-\lambda})$. The total space of the $\ol{\partial}$-complex of antiholomorphic forms on $X$ with values in ${\mc L}_{-\lambda}$ is $(\Lambda{\mk n}^*\otimes C^{\infty}(K)\otimes \CC_{-\lambda})^{\mk h}$. The latter is also the total space of the Lie algebra cochain complex $C({\mk n},C^{\infty}(K)\otimes \CC_{-\lambda})^{\mk h}$ defined with respect to the right action of ${\mk n}$ on $C^{\infty}(K)$ and the trivial action on $\CC_{-\lambda}$. Moreover, the coboundary operators coincide and we have an isomorphism of complexes
$$
C_{\ol{\partial}}(X,{\mc L}_{-\lambda}) \cong C({\mk n},C^{\infty}(K)\otimes \CC_{-\lambda})^{\mk h} \;.
$$
Both complexes carry a left $K$-action and the isomorphism above is a $K$-module isomorphism. We can conclude that, for all $q$, there is a $K$-equivariant isomorphism of cohomology groups
\begin{gather}\label{For H^o,q(X)=H^q(n)}
H^{0,q}(X,{\mc L}_{-\lambda}) \cong H^q({\mk n},C^{\infty}(K)\otimes \CC_{-\lambda})^{\mk h} \;.
\end{gather}
Dolbeault's theorem gives \begin{gather}\label{For Dolbeault's thm}
H^{q}(X,{\mc O}(\lambda))\cong H^{0,q}(X,{\mc L}_{-\lambda}) \;.
\end{gather}
Combining (\ref{For H^o,q(X)=H^q(n)}) and (\ref{For Dolbeault's thm}) we deduce the following isomorphism of $K$-modules:
$$
H^{q}(X,{\mc O}(\lambda)) \cong H^q({\mk n},C^{\infty}(K)\otimes \CC_{-\lambda})^{\mk h} \;.
$$

We shall restrict our considerations to the space of representable functions ${\mc F}(K)$ instead of $C^{\infty}(K)$. This can be viewed as a restriction to differential forms with algebraic coefficients, if a suitable algebraic structure on $X$ is chosen. In fact, one has $H^q({\mk n},C^{\infty}(K)\otimes \CC_{-\lambda})^{\mk h} \cong H^q({\mk n},{\mc F}(K)\otimes \CC_{-\lambda})^{\mk h}$, which can be deduced, via the Peter-Weyl theorem, from the theorem of Kostant stated in the following section. Now, using the Peter-Weyl theorem we obtain
\begin{align*}
H^q({\mk n},{\mc F}(K)\otimes \CC_{-\lambda})^{\mk h} & = \oplus_{\mu\in{\mc P}^+} H^q({\mk n},V(\mu)^*\otimes V(\mu)\otimes \CC_{-\lambda})^{\mk h} \\
              & = \oplus_{\mu\in{\mc P}^+} V(\mu)^*\otimes \sco{H^q({\mk n},V(\mu))\otimes \CC_{-\lambda}}^{\mk h} \;.
\end{align*}
In particular, we arrive at the reciprocity law observed by Bott \cite{Bott}: for $\lambda\in{\mc P}$ and $\mu\in{\mc P}^+$ we have
\begin{gather}\label{For Bott Recip}
\dim {\rm Hom}_{G}(V(\mu)^*,H^{q}(X,{\mc O}(\lambda))) = \dim {\rm Hom}_{\mk h}(\CC_{\lambda},H^q({\mk n},V(\mu))) \;.
\end{gather}

\subsubsection{Kostant's theorem on Lie algebra cohomology}\label{Sec BWB and Kosta}

In the introduction we stated the Borel-Weil-Bott theorem, which determines the left hand side of (\ref{For Bott Recip}). In this section, we formulate a theorem of Kostant, which determines the right hand side. Both theorems are in fact more general - they treat general flag manifolds of $G$, i.e. coset spaces of the form $G/P$, where $P$ is a parabolic subgroup. We only present the statement for complete flag manifolds as it is sufficient for our purposes.

The cochain complex $C({\mk n},V(\mu))$ is endowed with a Hermitian form obtained as the product of the form $\{,\}$ on $\Lambda{\mk n}$, defined as the restriction of the Hermitian form on $\Lambda{\mk g}$ given in Section \ref{Par SSLieAlg}, and a $K$-invariant Hermitian form on $V(\mu)$. Let ${\rm L}=dd^*+d^*d$ be the Laplacian defined with respect to this Hermitian form. The main ingredient in Kostant's proof is a spectral decomposition for the Laplacian $L$ and, in particular, an explicit description of its kernel - the harmonic cocycles.

Let $\{ e^*_{-\alpha} \; \vert \; \alpha\subset\Delta^+ \}$ denote the basis of ${\mk n}^*$ dual to the basis $\{ e_{\alpha} \; \vert \; \alpha\subset\Delta^+ \}$ of ${\mk n}$. Thus $\{ e^*_{-\Phi}=\wedge_{\alpha\in\Phi}e^*_{-\alpha} \; \vert \; \Phi\subset\Delta^+ \}$ is a basis of $\Lambda{\mk n}^*$ dual to the basis $\{ e_{\Phi} \; \vert \; \Phi\subset\Delta^+ \}$ of $\Lambda{\mk n}$ (see Proposition \ref{Prop ol(a)=a* regular}).

\begin{theorem}\label{Theo Kostant's}{\rm(Kostant, \cite{Kostant})} Let $\mu\in{\mc P}^+$ and $q\in\NN$. Then the space of harmonics in $C^q({\mk n},V(\mu))$ is given by
$$
{\rm Harm}^q({\mk n},V(\mu)) = \oplus_{w\in{\mc W}(q)} C({\mk n},V(\mu))^{w\cdot\mu} \;,
$$
where the super-script $w\cdot\mu$ designates ${\mk h}$-weight space and ${\mc W}(q)$ denotes the set of elements in ${\mc W}$ with length $q$. For each $w\in {\mc W}$ the weight space $C({\mk n},V(\mu))^{w\cdot\mu}$ is one-dimensional, generated by the cocycle
$$
e^*_{-\Phi_{w^{-1}}} \otimes v^{w(\mu)} \;.
$$
Consequently, the cohomology group $H^q({\mk n},V(\mu))$ decomposes into one-dimensional ${\mk h}$-submodules as follows:
$$
H^q({\mk n},V(\mu)) = \oplus_{w\in{\mc W}(q)} H({\mk n},V(\mu))^{w\cdot\mu} \;.
$$
\end{theorem}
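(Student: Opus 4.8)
The plan is to follow Kostant's original strategy: turn the cochain complex into a positive-definite Hermitian inner-product space, introduce the associated Laplacian, and then reduce the whole statement to one arithmetic fact about $\rho$-shifted weights together with a single eigenvalue computation. The explicit harmonic representatives then fall out automatically.

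First I would equip $C^q({\mk n},V(\mu))=\Lambda^q{\mk n}^*\otimes V(\mu)$ with the product of a $K$-invariant positive-definite Hermitian form on $V(\mu)$ and the form $\{,\}$ on $\Lambda{\mk n}^*$ supplied by Proposition \ref{Prop ol(a)=a*}(iii). Since $\ol{e_\alpha}=e_{-\alpha}$, the conjugation negates ${\mk h}$-weights, so distinct ${\mk h}$-weight spaces of $C({\mk n},V(\mu))$ are orthogonal for this form. Let $d$ be the Chevalley--Eilenberg differential, $d^*$ its adjoint, and $L=dd^*+d^*d$. Because $d$ is ${\mk b}$-equivariant it preserves ${\mk h}$-weights, hence so do $d^*$ and $L$; consequently the finite-dimensional Hodge decomposition $C^q=\ker L|_{C^q}\oplus{\rm im}\,d\oplus{\rm im}\,d^*$, the identification ${\rm Harm}^q({\mk n},V(\mu))=\ker L|_{C^q}\cong H^q({\mk n},V(\mu))$, and the equivalence $Lc=0\Leftrightarrow dc=0,\ d^*c=0$ are all compatible with the ${\mk h}$-grading, and the positive-definiteness guarantees $\dim{\rm Harm}^q=\dim H^q$.

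Second comes the arithmetic core. For $w\in{\mc W}$ put $c_w=e^*_{-\Phi_{w^{-1}}}\otimes v^{w(\mu)}$. The ${\mk h}$-weight of $e^*_{-\Phi_{w^{-1}}}$ is $-\langle\Phi_{w^{-1}}\rangle$, which by the second formula in (\ref{For w^{-1}(w.lambda)}) equals $w\cdot 0=w(\rho)-\rho$, so $c_w$ is a nonzero vector of ${\mk h}$-weight $w\cdot\mu$ sitting in degree $|\Phi_{w^{-1}}|=l(w)$. The key lemma, which is the heart of the proof, is: for every weight $\nu$ of $\Lambda{\mk n}^*\otimes V(\mu)$ one has $\|\nu+\rho\|\leq\|\mu+\rho\|$, with equality if and only if $\nu=w\cdot\mu$ for some $w\in{\mc W}$, in which case the total weight space $C({\mk n},V(\mu))^{w\cdot\mu}$ is one-dimensional, spanned by $c_w$, and concentrated in degree $l(w)$. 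I would prove it by writing $\nu=-\langle\Phi\rangle+\eta$ with $\Phi\subseteq\Delta^+$ and $\eta\in{\rm Supp}(V(\mu))$, estimating $\|\rho-\langle\Phi\rangle\|\leq\|\rho\|$ (equality iff $\Phi=\Phi_v$ for a unique $v\in{\mc W}$, where then $\rho-\langle\Phi_v\rangle=v^{-1}(\rho)$) and $\|\eta+\rho\|\leq\|\mu+\rho\|$, and analysing the two equality cases simultaneously. This is precisely the combinatorics of Kostant's $\S5$, and it is where essentially all the real work lies.

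Third, I would compute $L$. Kostant's identity expresses $2L$ on $C({\mk n},V(\mu))$ as the Casimir of ${\mk g}$ acting on the $V(\mu)$-factor — the scalar $\langle\mu,\mu+2\rho\rangle=\|\mu+\rho\|^2-\|\rho\|^2$ — minus the quadratic operator determined by the ${\mk h}$-weight, which on the $\nu$-weight space is $\langle\nu,\nu+2\rho\rangle=\|\nu+\rho\|^2-\|\rho\|^2$; hence $L$ acts on that space by $\tfrac{1}{2}\bigl(\|\mu+\rho\|^2-\|\nu+\rho\|^2\bigr)$. Establishing this identity is the second nontrivial step: a bracket computation with the explicit expressions for $d$ and $d^*$ in terms of exterior multiplications by the $e^*_{-\alpha}$, contractions by the $e_\alpha$, and the ${\mk g}$-action on $V(\mu)$, using $[e_\alpha,e_{-\alpha}]=h_\alpha$ and the normalisation (\ref{For OrthRel e_alf}). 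Combining it with the key lemma gives $L\geq 0$ and $\ker L|_{C^q}=\bigoplus_{\nu:\,\|\nu+\rho\|=\|\mu+\rho\|}C^q({\mk n},V(\mu))^\nu=\bigoplus_{w\in{\mc W}(q)}C({\mk n},V(\mu))^{w\cdot\mu}=\bigoplus_{w\in{\mc W}(q)}\CC\,c_w$; each $c_w$ is then forced to be a cocycle, since $0=\langle Lc_w,c_w\rangle=\|dc_w\|^2+\|d^*c_w\|^2$. Passing to cohomology through ${\rm Harm}^q\cong H^q({\mk n},V(\mu))$ gives the asserted decomposition of $H^q$. As an aside, the ${\mk h}$-module structure of $H^q({\mk n},V(\mu))$ alone — though not the explicit harmonics — could be obtained more quickly by feeding the Borel--Weil--Bott theorem into Bott's reciprocity (\ref{For Bott Recip}). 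The two genuine obstacles are exactly the steps flagged above: the equality analysis in the key lemma and the verification of Kostant's Laplacian identity.
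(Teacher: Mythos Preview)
The paper does not prove this theorem: it is stated as a background result with a citation to Kostant's original paper \cite{Kostant}, and no argument is given in the text. Your sketch is a faithful outline of Kostant's own proof (Hermitian form, Laplacian, the Casimir identity $2L=\|\mu+\rho\|^2-\|\nu+\rho\|^2$ on weight spaces, and the combinatorial equality analysis), so there is nothing in the paper to compare it against beyond noting that your approach is the one the paper is citing.
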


The above theorem can be used to obtain explicitly the harmonics for the complex $C({\mk n},{\mc F}(K)\otimes \CC_{-\lambda})^{\mk h}$, for regular $\lambda\in{\mc P}$. The relevant Hermitian form is the product of the form on $\Lambda{\mk n}$, as above, and the form on ${\mc F}(K)$ which is the restriction of the standard $K$-invariant form on $L^2(K)$. We have
\begin{gather*}
{\rm Harm}({\mk n},{\mc F}(K)\otimes \CC_{-\lambda})^{\mk h} = V(w_\lambda\cdot\lambda)^*\otimes ({\rm Harm}({\mk n},V(w_\lambda\cdot\lambda))^{\lambda}\otimes \CC_{-\lambda})^{\mk h} .
\end{gather*}
By the above theorem, ${\rm Harm}({\mk n},V(w_\lambda\cdot\lambda))^{\lambda} = e^*_{-\Phi_{w_\lambda}}\otimes v^{\lambda+\langle\Phi_{w_\lambda}\rangle}$. Here we have used the formula $w^{-1}(w\cdot\lambda)=\lambda+\langle\Phi_{w_\lambda}\rangle$ given in (\ref{For w^-1(w.lambda)}).

\section{Embeddings and cohomological components:\\ general results}\label{Sec Maps of Flags}

Let $\iota:{\mk g}\rw\wt{\mk g}$ be a monomorphism between two semisimple complex Lie algebras; we regard ${\mk g}$ as a subalgebra of $\wt{\mk g}$. Let $G$ and $\wt{G}$ the associated connected, simply connected complex Lie groups. Then $\iota$ induces a homomorphism $\phi: G\rw \wt{G}$ with a finite kernel.

We are interested in $\phi$-equivariant maps between flag manifolds of $G$ and $\wt{G}$. The complete flag manifold $X=G/B$ of $G$ can be viewed as a parameter space for all Borel subalgebras of ${\mk g}$. Defining a $G$-equivariant map from $X$ to the complete flag manifold $\wt{X}$ of $\wt{G}$ is equivalent to choosing a Borel subalgebra $\wt{\mk b}\subset\wt{\mk g}$ containing ${\mk b}$. Such a subalgebra always exists, but may not be unique in general. Since ${\mk b}={\mk g}\cap\wt{\mk b}$, the map $G/B\rw\wt{G}/\wt{B}$ is always an embedding. We now assume that a pair of Borel subalgebras ${\mk b}\subset\wt{\mk b}$ is fixed, and denote the resulting embedding of flag manifolds by
$$
\varphi:X\lw\wt{X} \;.
$$
Fix a compact real form ${\mk k}\subset{\mk g}$ and extend it to a compact real form $\wt{\mk k}\subset\wt{\mk g}$. Set ${\mk t}={\mk k}\cap{\mk b}$, $\wt{\mk t}=\wt{\mk k}\cap\wt{\mk b}$, ${\mk h}={\mk t}\oplus i{\mk t}$, $\wt{\mk h}=\wt{\mk t}\oplus i\wt{\mk t}$. Clearly ${\mk t}={\mk k}\cap\wt{\mk t}$ and ${\mk h}={\mk g}\cap\wt{\mk h}$. Denote by $K$, $\wt{K}$, $T$, $\wt{T}$, $H$, $\wt{H}$ the corresponding Lie groups. Note that the map $\varphi$ can be interpreted as a map between coset spaces of compact groups:
$$
\varphi:K/T \lw \wt{K}/\wt{T} \;.
$$
After the above choices have been made we can use the notions and notation given in the Introduction and Section \ref{Sec Preliminar}. The differential of $\varphi$ evaluated at $x_o=eT$ is a $T$-equivariant linear map, which we denote by $\varphi_o=(d\varphi)_{x_o}$. In view of (\ref{For n=T(0,1)}) we get
\begin{gather}\label{For varphi_o:n->wt(n)}
\varphi_o:{\mk n}^{\pm}\rw\wt{\mk n}^{\pm}
\end{gather}
Note that $\varphi_o$ coincides with the restriction of $\iota$ to ${\mk n}^{\pm}$, since ${\mk n}^{\pm}={\mk g}\cap\wt{\mk n}^{\pm}$. One should be aware that this coincidence is not necessarily true for partial flag manifolds.

\subsection{Pullback maps and cohomological components}\label{Sec Pullbacks and CohCom}

The homomorphism $\phi:G\lw \wt{G}$ and the embedding $\varphi:X\hookrightarrow\wt{X}$ defined in the previous section give rise to various pullback maps. Primarily, we are interested in the restriction of homogeneous holomorphic line bundles from $\wt{X}$ to $X$, and the resulting cohomological pullbacks. Since $\phi(B)\subset\wt{B}$, each character $\wt{\lambda}$ of $\wt{B}$ defines a character $\lambda=\wt{\lambda}\circ\phi$ of $B$, and for the associated line bundles we have
$$
\wt{\mc L}_{-\wt{\lambda}} \stackrel{\varphi^*}{\lo} {\mc L}_{-\lambda} \;.
$$
Recall that ${\rm Pic}(X)\cong{\mc P}$ and ${\rm Pic}(\wt{X})\cong\wt{\mc P}$ (see Section \ref{Sec Flags and Bundles}). Since $\iota:{\mk h}\rw\wt{\mk h}$ is an inclusion, its dual $\iota^*:\wt{\mk h}^*\rw{\mk h}^*$ is surjective. The map $\iota^*$ respects the weight lattices, hence the restriction of line bundles can be computed using the map
$$
\iota^*:\wt{\mc P}\lw{\mc P} \;.
$$
Note that the above map of lattices may not be surjective. In fact, we have ${\mc P}/\iota^*{\wt{\mc P}}\cong{\rm Ker}(\phi)$, the latter being a finite subgroup of $H$.

Let $\wt{\lambda}\in\wt{\mc P}$ and $\lambda=\iota^*(\wt{\lambda})\in{\mc P}$. There is a pullback map
$$
\pi^{\wt{\lambda}}: H(\wt{X},\wt{\mc O}(\wt{\lambda})) \lw H(X,{\mc O}(\lambda)) \;,
$$
which is $G$-equivariant. This map is the central object of our study. In particular, we are aiming to solve problems I,II,III stated in the introduction.

\begin{zab}\label{Zab H^0->H^0}
The three problems formulated in the introduction are easily solved for dominant $\wt{\lambda}$. Since ${\mk b}\subset\wt{\mk b}$, we have $\iota^*(\wt{\mc P}^+) \subset {\mc P}^+$. Let $\wt{\lambda}\in\wt{\mc P}^+$. Then

\noindent{\rm (I)} $\pi^{\wt{\lambda}}:H^0(\wt{X},\wt{\mc O}(\wt{\lambda})) \rw H^0(X,{\mc O}(\lambda))$ is nonzero, because the effective line bundles on homogeneous manifolds are base-point-free.

\noindent{\rm (II)} The corresponding cohomological component is $V(\lambda)\subset\wt{V}(\wt{\lambda})$, generated by a $\wt{\mk b}$-highest weight vector in $\wt{V}(\wt{\lambda})$.

\noindent{\rm (III)} Let ${\mc C}_0$ of denote the subset of ${\mc C}$ (see (\ref{For C in intro})) resulting from pullbacks on global sections. Then ${\mc C}_0=\{(\mu,\wt{\mu})\in{\mc P}^+\times\wt{\mc P}^+ : \mu=\iota^*(\wt{\mu})\}$.
\end{zab}

The above case is important but, as far as our problems are concerned, trivial. As we shall see, the problems become nontrivial for pullbacks in higher degrees of cohomology. To study $\pi^{\wt{\lambda}}$ we intend to translate it to a pullback in Lie algebra cohomology. First, Dolbeault's theorem allows us to translate the map $\pi^{\wt{\lambda}}$ to a pullback in Dolbeault cohomology, denoted by
$$
\varphi_{-\wt{\lambda}}^*: H^{0,q}(\wt{X},\wt{\mc L}_{-\wt{\lambda}}) \lw H^{0,q}(X,{\mc L}_{-\lambda}) \;.
$$
The nonvanishing of $\pi^{\wt{\lambda}}$ is equivalent to the nonvanishing of $\varphi_{-\wt{\lambda}}^*$. Now we can use the fact that the Dolbeault ${\ol{\partial}}$-complexes can be interpreted as Lie algebra cochain complexes (see Section \ref{Sec Cohomologies}).

The homomorphism $\phi:K\rw\wt{K}$ gives rise to a $K\times K$-equivariant pullback of representative functions, denoted by
$$
{\rm r}:{\mc F}(\wt{K}) \lw {\mc F}(K) \;.
$$
The tangent map $\varphi_o$ defined in (\ref{For varphi_o:n->wt(n)}), after dualization, gives rise to a map of Grassmann algebras
$$
\varphi_o^*: \Lambda\wt{\mk n}^* \lw \Lambda{\mk n}^* \;.
$$
Combining the maps ${\rm r}$ and $\varphi_o^*$ we get a pullback on Lie algebra cochain complexes
$$
\varphi_o^* \otimes {\rm r} : C(\wt{\mk n},{\mc F}(\wt{K})) \lw C({\mk n},{\mc F}(K)) \;.
$$
We denote the resulting map on cohomology by
$$
\varpi : H(\wt{\mk n},{\mc F}(\wt{K})) \lw H({\mk n},{\mc F}(K)) \;.
$$
The map $\varpi$ plays a central role, as it encompasses the information about all pullback maps $\pi^{\wt{\lambda}}$, for $\wt{\lambda}\in\wt{\mc P}$.

The pullback of antiholomorphic differential forms with values in ${\mc L}_{-\wt{\lambda}}$ and representative coefficients is translated to the following map between Lie algebra complexes
$$
\varphi_o^* \otimes {\rm r} \otimes 1 : C(\wt{\mk n},{\mc F}(\wt{K})\otimes\CC_{-\wt{\lambda}})^{\wt{\mk h}} \lw C({\mk n},{\mc F}(K)\otimes\CC_{-\lambda})^{\mk h} \;.
$$
The induced map on cohomology, denoted by $\varpi^{\wt{\lambda}}$, equals the restriction of $\varpi\otimes 1$ to the space of $\wt{\mk h}$-invariants:
$$
\varpi^{\wt{\lambda}} : H(\wt{\mk n},C^{\infty}(\wt{K})\otimes\CC_{-\wt{\lambda}})^{\wt{\mk h}} \lw H({\mk n},C^{\infty}(K)\otimes\CC_{-\lambda})^{\mk h}.
$$
In this way, we arrive at the following theorem.

\begin{theorem}\label{Prop Translate}
Let $\wt{\lambda}\in\wt{\mc P}$ and $\lambda=\iota^*(\wt{\lambda})\in{\mc P}$. Then the following diagram commutes:
\begin{gather*}
\begin{array}{ccccc}
H^q(\wt{X},\wt{\mc O}(\wt{\lambda})) & \cong & H^{0,q}(\wt{X},\wt{\mc L}_{-\wt{\lambda}}) & \cong & H^q(\wt{\mk n},C^{\infty}(\wt{K})\otimes\CC_{-\wt{\lambda}})^{\wt{\mk h}} \\
 &&&&\\
\pi^{\wt{\lambda}} \downarrow & & \varphi_{-\wt{\lambda}}^* \downarrow & & \varpi^{\wt{\lambda}} \downarrow \\
 &&&&\\
H^q(X,{\mc O}(\lambda)) & \cong & H^{0,q}(X,{\mc L}_{-\lambda}) & \cong & H^q({\mk n},C^{\infty}(K)\otimes\CC_{-\lambda})^{\mk h} \;.\\
\end{array}
\end{gather*}
\end{theorem}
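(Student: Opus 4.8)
The plan is to verify commutativity of the two squares separately, since the horizontal isomorphisms have already been established in Section~\ref{Sec Cohomologies} and the compatibility of $\varphi$ with the compact-group pictures is built into the setup of Section~\ref{Sec Maps of Flags}. For the left square, the map $\pi^{\wt\lambda}$ is the sheaf-cohomology pullback along $\varphi:X\hookrightarrow\wt X$ of the morphism of sheaves $\varphi^*\wt{\mc O}(\wt\lambda)\to{\mc O}(\lambda)$, while $\varphi_{-\wt\lambda}^*$ is the Dolbeault pullback along the same smooth map $\varphi:K/T\to\wt K/\wt T$ of $\ol\partial$-closed $(0,q)$-forms with coefficients in $\wt{\mc L}_{-\wt\lambda}$. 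Commutativity of the left square is then exactly the naturality of Dolbeault's isomorphism (\ref{For Dolbeault's thm}) with respect to pullback of line bundles along holomorphic maps: the Dolbeault complex $C_{\ol\partial}(\wt X,\wt{\mc L}_{-\wt\lambda})$ is a fine resolution of $\wt{\mc O}(\wt\lambda)$, the pullback $\varphi^*$ is a morphism of such resolutions lying over the sheaf morphism $\varphi^*\wt{\mc O}(\wt\lambda)\to{\mc O}(\lambda)$, and any morphism of resolutions induces the canonical map on hypercohomology. So this square is formal once one records that $\varphi$ is holomorphic and $\varphi^*$ commutes with $\ol\partial$.

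\textbf{The right square.} Here the work is to unwind the chain of identifications used to prove (\ref{For H^o,q(X)=H^q(n)}) and check that it is compatible on the nose with the two pullback constructions. Recall that the isomorphism $H^{0,q}(X,{\mc L}_{-\lambda})\cong H^q({\mk n},C^\infty(K)\otimes\CC_{-\lambda})^{\mk h}$ came from an \emph{isomorphism of complexes} $C_{\ol\partial}(X,{\mc L}_{-\lambda})\cong C({\mk n},C^\infty(K)\otimes\CC_{-\lambda})^{\mk h}$ obtained by lifting $(0,q)$-forms to $K$ via $p:K\to X$ and using the identification $T^{0,1}X=K\times_T{\mk n}$ from (\ref{For n=T(0,1)}). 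On the $\wt X$ side one has the analogous isomorphism with $\wt{\mk n}$, $\wt K$, $\wt{\mk h}$. The point is that under these lifts, the Dolbeault pullback $\varphi_{-\wt\lambda}^*$ becomes precisely the map $\varphi_o^*\otimes{\rm r}\otimes 1$: the scalar pullback of functions from $\wt K$ to $K$ along $\phi:K\to\wt K$ is ${\rm r}$ (restricted to representable functions, but that suffices since, as noted in Section~\ref{Sec Cohomologies}, one may replace $C^\infty$ by ${\mc F}$), and the pullback of the antiholomorphic cotangent part is the dual of the tangent map $\varphi_o:{\mk n}\to\wt{\mk n}$ from (\ref{For varphi_o:n->wt(n)}), which is exactly $\varphi_o^*:\Lambda\wt{\mk n}^*\to\Lambda{\mk n}^*$. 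The coefficient line $\CC_{-\wt\lambda}$ pulls back to $\CC_{-\lambda}$ with $\lambda=\iota^*(\wt\lambda)$ because $\lambda=\wt\lambda\circ\phi$ by definition. Thus the right square commutes already at the level of complexes, before passing to cohomology, and the induced map on $\wt{\mk h}$-invariants is by construction $\varpi^{\wt\lambda}$.

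\textbf{Assembling and the main obstacle.} Putting the two squares together gives the theorem. The genuinely delicate point — the place where one must be careful rather than merely formal — is the middle step of the right square: making sure that the lift-to-$K$ description of Dolbeault cochains is strictly $\phi$-equivariant, i.e. that the diagram of smooth maps
\begin{gather*}
\begin{array}{ccc}
K & \lw & \wt K \\
\downarrow p & & \downarrow \wt p \\
X & \stackrel{\varphi}{\lw} & \wt X
\end{array}
\end{gather*}
commutes and that under the resulting identifications the tangent map $(d\varphi)_{x_o}$ restricted to the $(0,1)$-part really is $\iota|_{\mk n}$, which is the content of the remark following (\ref{For varphi_o:n->wt(n)}) (and is the spot where the hypothesis that we are dealing with \emph{complete} flag manifolds, so that $\varphi_o$ agrees with $\iota|_{{\mk n}^\pm}$, is used). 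Once that identification is in hand, the commutativity of all squares is a matter of chasing the explicit formulas for the lifted forms, with no hard analysis involved; the isomorphism $H^q({\mk n},C^\infty(K)\otimes\CC_{-\lambda})^{\mk h}\cong H^q({\mk n},{\mc F}(K)\otimes\CC_{-\lambda})^{\mk h}$ invoked above lets one work with representable functions throughout, keeping everything algebraic and finite-dimensional in each Peter--Weyl isotypic piece.
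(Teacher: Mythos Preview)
Your argument is correct and follows the same approach as the paper; in fact the paper does not give a separate proof at all but simply states ``In this way, we arrive at the following theorem'' after constructing the maps ${\rm r}$, $\varphi_o^*$, and $\varpi^{\wt\lambda}$ in the paragraphs preceding the statement. Your write-up makes explicit the two points the paper takes for granted --- the naturality of Dolbeault's isomorphism for the left square and the cochain-level identification of the Dolbeault pullback with $\varphi_o^*\otimes{\rm r}\otimes 1$ for the right square --- which is entirely reasonable and arguably an improvement in exposition.
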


\subsection{A criterion for nonvanishing of $\pi^{\wt{\lambda}}$}\label{Sec Criterion}

In this section we fix $\wt{\lambda}\in\wt{\mc P}$ and denote $\lambda=\iota^*(\wt{\lambda})\in{\mc P}$. We assume that both $\wt{\lambda}$ and $\lambda$ are regular weights (otherwise the associated pullback $\pi^{\wt{\lambda}}$ vanishes for trivial reasons) and we set $\wt{w}=\wt{w}_{\wt{\lambda}}$ and $w=w_{\lambda}$. The Borel-Weil-Bott theorem gives $H^{\wt{l}(\wt{\lambda})}(\wt{X},\wt{\mc O}(\wt{\lambda}))\cong \wt{V}(\wt{w}\cdot\wt{\lambda})^*$ and $H^{l(\lambda)}(X,{\mc O}(\lambda))\cong V(w\cdot\lambda)^*$. The map $\pi^{\wt{\lambda}}$ and its dual $(\pi^{\wt{\lambda}})^*$, written as maps of $G$-modules, are
$$
\pi^{\wt{\lambda}}:\wt{V}(\wt{w}\cdot\wt{\lambda})^* \lw V(w\cdot\lambda)^* \quad,\quad (\pi^{\wt{\lambda}})^*:V(w\cdot\lambda) \lw \wt{V}(\wt{w}\cdot\wt{\lambda}) \;.
$$
There are two obvious necessary conditions for nonvanishing of $\pi^{\wt{\lambda}}$: first, the equality $\wt{l}(\wt{\lambda})=l(\lambda)$; second, the existence of a $G$-submodule in $\wt{V}(\wt{w}\cdot\wt{\lambda})$ isomorphic to $V(w\cdot\lambda)$, i.e. ${\rm Hom}_{G}(V(w\cdot\lambda),\wt{V}(\wt{w}\cdot\wt{\lambda})) \ne 0$. These conditions are not sufficient in general; counter-examples may be found using the results of Section \ref{Sec Invariants}.

Theorem \ref{Prop Translate} allows us to translate the study of the map $\pi^{\wt{\lambda}}$ to the language of Lie algebra cohomology. Namely, the nonvanishing of $\pi^{\wt{\lambda}}$ is equivalent to the nonvanishing of $\varpi^{\wt{\lambda}}$. In this setting Kostant's results may be used to obtain a more explicit description of the pullback.

\begin{theorem}\label{Theo Necessity}
The pullback $\varpi^{\wt{\lambda}}$ (or, equivalently, $\pi^{\wt{\lambda}}$) is nonzero if and only if the following two conditions hold:

{\rm (i)} $\varphi_o^*(\wt{e}^*_{-\Phi_{\wt{w}}}) = a e^*_{-\Phi_{w}}$, for some nonzero complex number $a$.

{\rm (ii)} ${\rm Hom}_G(V(w\cdot\lambda),{\mk U}(\mk g)\wt{v}^{\wt{\lambda}+\langle\Phi_{\wt{w}}\rangle}) \ne 0$, i.e. the $G$-submodule of $\wt{V}(\wt{w}\cdot\wt{\lambda})$ generated by the extreme weight vector $\wt{v}^{\wt{\lambda}+\langle\Phi_{\wt{w}}\rangle}$ contains an irreducible component isomorphic to $V(w\cdot\lambda)$.
\end{theorem}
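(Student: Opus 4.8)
The plan is to work entirely in Lie algebra cohomology, using Theorem~\ref{Prop Translate} to replace $\pi^{\wt\lambda}$ by $\varpi^{\wt\lambda}$, and then to analyze $\varpi^{\wt\lambda}$ on Kostant's harmonic representatives. By the Peter--Weyl decomposition and Kostant's theorem (Theorem~\ref{Theo Kostant's}), the source cohomology group $H^{\wt l(\wt\lambda)}(\wt{\mk n},{\mc F}(\wt K)\otimes\CC_{-\wt\lambda})^{\wt{\mk h}}$ is one-dimensional, spanned by the class of the harmonic cocycle obtained from $\wt e^*_{-\Phi_{\wt w}}\otimes\wt v^{\,\wt\lambda+\langle\Phi_{\wt w}\rangle}$ (tensored with the matrix-coefficient factor $\wt V(\wt w\cdot\wt\lambda)^*$); similarly the target is spanned by the class of $e^*_{-\Phi_w}\otimes v^{\lambda+\langle\Phi_w\rangle}$. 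The chain-level pullback is $\varphi_o^*\otimes {\rm r}\otimes 1$. The key computational point is that $\varphi_o^*$ is a map of graded algebras $\Lambda\wt{\mk n}^*\to\Lambda{\mk n}^*$ and, because $\iota$ respects the $\mk h$-weight gradings, $\varphi_o^*(\wt e^*_{-\Phi_{\wt w}})$ is a (possibly zero) $\mk h$-weight vector in $\Lambda^{l(\wt w)}{\mk n}^*$ of weight $-\langle\Phi_{\wt w}\rangle$; since the only $\mk h$-weight vectors in $\Lambda{\mk n}^*$ are scalar multiples of the $e^*_{-\Phi}$ (Proposition~\ref{Prop ol(a)=a* regular}(iii)), it is either $0$ or a scalar multiple of $e^*_{-\Phi}$ for the unique $\Phi\subset\Delta^+$ with $\langle\Phi\rangle=\langle\Phi_{\wt w}\rangle$, $|\Phi|=l(\wt w)$. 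Matching degrees forces $l(\lambda)=\wt l(\wt\lambda)$ and $\Phi=\Phi_w$. This is the content of condition (i): it is precisely the statement that the chain map does not annihilate the Grassmann factor of the source harmonic cocycle.

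Next I would show that, granting (i), the image of the source harmonic class under the chain map is the cochain $a\, e^*_{-\Phi_w}\otimes {\rm r}(\text{matrix coefficients of }\wt v^{\,\wt\lambda+\langle\Phi_{\wt w}\rangle})$, and that this cochain is a cocycle whose cohomology class is nonzero exactly when its projection to the (unique) harmonic line in the target is nonzero. Here one uses that $\varphi_o^*\otimes{\rm r}\otimes 1$ is a morphism of complexes, so it sends cocycles to cocycles; and one uses Hodge theory for the Kostant Laplacian $L$ on $C({\mk n},{\mc F}(K)\otimes\CC_{-\lambda})^{\mk h}$: a class is zero iff the harmonic projection of any representative is zero, and the harmonic projection of $e^*_{-\Phi_w}\otimes({\rm r of }\ \wt v^{\bullet})$ is computed by pairing against the harmonic cocycle $e^*_{-\Phi_w}\otimes v^{\lambda+\langle\Phi_w\rangle}$ in the appropriate isotypic component, using the $K$-invariant (equivalently $\wt K$-invariant, restricted) Hermitian form. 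The restriction map ${\rm r}$ is the adjoint of the inclusion on matrix coefficients, so after Peter--Weyl this pairing becomes the pairing, inside $\wt V(\wt w\cdot\wt\lambda)$ with a $\wt K$-invariant Hermitian form, between the extreme weight vector $\wt v^{\,\wt\lambda+\langle\Phi_{\wt w}\rangle}$ and the $V(w\cdot\lambda)$-isotypic component of $\wt V(\wt w\cdot\wt\lambda)$. Nonvanishing of this pairing is equivalent to saying that the cyclic $G$-module ${\mk U}(\mk g)\,\wt v^{\,\wt\lambda+\langle\Phi_{\wt w}\rangle}$ contains a copy of $V(w\cdot\lambda)$, because the orthogonal complement of that cyclic submodule is $G$-stable and the Hermitian form is $G$-semi-invariant; this is condition (ii). (One also notes $\wt w\cdot\wt\lambda=\wt w(\wt\lambda+\wt\rho)-\wt\rho$ so that $\wt v^{\,\wt\lambda+\langle\Phi_{\wt w}\rangle}=\wt v^{\,\wt w^{-1}(\wt w\cdot\wt\lambda)}$ by formula~\eqref{For w^{-1}(w.lambda)}, making this genuinely an extreme weight vector of $\wt V(\wt w\cdot\wt\lambda)$.)

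Conversely, if either (i) or (ii) fails, $\varpi^{\wt\lambda}$ vanishes: if (i) fails then $\varphi_o^*$ kills the Grassmann factor (or lands in the wrong degree), so the chain-level image of the source harmonic cocycle is $0$; if (i) holds but (ii) fails then by the computation above the harmonic projection of the image vanishes, hence the image class is $0$, and since the source is one-dimensional $\varpi^{\wt\lambda}\equiv 0$. Finally, by Schur's lemma (as in the introduction) $\varpi^{\wt\lambda}$ is either zero or surjective, so nonvanishing on the distinguished harmonic generator is equivalent to nonvanishing of the whole map, which via Theorem~\ref{Prop Translate} is equivalent to nonvanishing of $\pi^{\wt\lambda}$.

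The main obstacle I anticipate is the second half of the argument: making rigorous the identification of ``harmonic projection of the pulled-back cocycle'' with ``Hermitian pairing of extreme weight vectors in $\wt V(\wt w\cdot\wt\lambda)$.'' This requires carefully tracking the $K$-invariant Hermitian forms through the isomorphisms of Theorem~\ref{Prop Translate}, verifying that the restriction ${\rm r}:{\mc F}(\wt K)\to{\mc F}(K)$ is indeed the Hilbert-space adjoint of the coefficient inclusion (so that pulling back and then projecting to harmonics equals projecting the source harmonic and then restricting), and checking that the Kostant Laplacians on the two complexes are compatible with $\varphi_o^*\otimes{\rm r}\otimes 1$ well enough for Hodge-theoretic descent of nonvanishing. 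The weight-space and degree bookkeeping for condition (i) is, by contrast, essentially forced by Proposition~\ref{Prop ol(a)=a* regular} and should be routine.
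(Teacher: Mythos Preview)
Your overall strategy matches the paper's: translate to Lie algebra cohomology via Theorem~\ref{Prop Translate}, evaluate the chain map on Kostant's harmonic representatives, and read off the two conditions. However, there is a genuine gap in your argument for condition~(i), and the fix for it also dissolves the ``main obstacle'' you flag for condition~(ii).

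The gap is the claim that $\varphi_o^*(\wt e^*_{-\Phi_{\wt w}})$ is automatically a scalar multiple of a single $e^*_{-\Phi}$, justified by ``the only $\mk h$-weight vectors in $\Lambda\mk n^*$ are scalar multiples of the $e^*_{-\Phi}$.'' Weight spaces in $\Lambda\mk n^*$ are \emph{not} one-dimensional in general: distinct subsets $\Phi\subset\Delta^+$ of the same cardinality can have the same sum $\langle\Phi\rangle$ (already in type $A_3$, e.g.\ $\{\alpha_1,\alpha_2+\alpha_3\}$ and $\{\alpha_3,\alpha_1+\alpha_2\}$). So a priori $\varphi_o^*(\wt e^*_{-\Phi_{\wt w}})$ is only a linear combination of several $e^*_{-\Phi}$, and you cannot conclude~(i) from weight considerations in $\Lambda\mk n^*$ alone.

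The paper closes this gap by using the full strength of Kostant's theorem (Theorem~\ref{Theo Kostant's}): for $\mu\in\mc P^+$ and $w\in\mc W$, the entire weight space $C(\mk n,V(\mu))^{w^{-1}\cdot\mu}$ in the cochain complex---not just its harmonic part---is one-dimensional, spanned by $e^*_{-\Phi_w}\otimes v^{w^{-1}(\mu)}$. Applied with $\mu=w\cdot\lambda$, this says $C(\mk n,V(w\cdot\lambda))^{\lambda}=\CC\,(e^*_{-\Phi_w}\otimes v^{\lambda+\langle\Phi_w\rangle})$. Once one has chosen $\wt f\in V^*\cong V(w\cdot\lambda)^*\subset\wt V(\wt w\cdot\wt\lambda)^*$ with ${\rm r}(\wt f\otimes\wt v^{\wt\lambda+\langle\Phi_{\wt w}\rangle})\ne 0$, the image cochain $\varphi_o^*(\wt e^*_{-\Phi_{\wt w}})\otimes{\rm r}(\wt f\otimes\wt v^{\wt\lambda+\langle\Phi_{\wt w}\rangle})$ lies in $V(w\cdot\lambda)^*\otimes C(\mk n,V(w\cdot\lambda))^{\lambda}$, hence is a pure tensor with Grassmann factor $e^*_{-\Phi_w}$. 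This simultaneously forces $\varphi_o^*(\wt e^*_{-\Phi_{\wt w}})\in\CC^\times e^*_{-\Phi_w}$ (giving~(i)) and shows that the image cochain is already harmonic, so no Hodge projection is needed. The equivalence with~(ii) then reduces, exactly as you outline, to the orthogonal $G$-isotypic decomposition of $\wt V(\wt w\cdot\wt\lambda)$ with respect to a $\wt K$-invariant Hermitian form: writing $\wt v^{\wt\lambda+\langle\Phi_{\wt w}\rangle}=\sum_j u_j$ along isotypic components, the $V(w\cdot\lambda)$-component $u_j$ is nonzero iff ${\rm r}(V^*\otimes\wt v)\ne 0$ iff $V(w\cdot\lambda)\subset{\mk U}(\mk g)\wt v^{\wt\lambda+\langle\Phi_{\wt w}\rangle}$.
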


\begin{proof} It is clear that the map $\varpi^{\wt{\lambda}}$ is nonzero if and only if the restriction of the map $\varpi:H(\wt{\mk n}, {\mc F}(\wt{K})) \rw H({\mk n}, {\mc F}(K))$ to the $\wt{\mk h}$-submodule $H(\wt{\mk n}, {\mc F}(\wt{K}))^{\wt{\lambda}}$ is nonzero. Furthermore, we can restrict our attention to harmonic representatives, because the vanishing of the pullback on all harmonics is equivalent to total vanishing.

Kostant's theorem (see Theorem \ref{Theo Kostant's}) implies that, after decomposing the space ${\mc F}(\wt{K})$ according to the Peter-Weyl theorem, we get
$$
H(\wt{\mk n}, {\mc F}(\wt{K}))^{\wt{\lambda}} = H(\wt{\mk n},\wt{V}(\wt{w}\cdot\wt{\lambda})^*\otimes \wt{V}(\wt{w}\cdot\wt{\lambda}))^{\wt{\lambda}} \;.
$$
The harmonic representatives are pure tensors of the form $\wt{e}^*_{-\Phi_{\wt{w}}}\otimes \wt{f}\otimes \wt{v}^{\wt{\lambda}+\langle\Phi_{\wt{w}}\rangle}$, where $\wt{f}$ varies in $V(\wt{w}\cdot\wt{\lambda})^*$. Since $\iota^*(\wt{\lambda})=\lambda$ we have
$$
\varpi(H(\wt{\mk n}, {\mc F}(\wt{K}))^{\wt{\lambda}}) \subset H({\mk n},{\mc F}(K))^\lambda = H({\mk n}, V(w\cdot\lambda)^*\otimes V(w\cdot\lambda))^{\lambda} \;.
$$
Moreover, since the map of cochain complexes is also equivariant with respect to the right ${\mk h}$-action, we get
$$
\varphi_o^*(\wt{e}^*_{-\Phi_{\wt{w}}})\otimes {\rm r}(\wt{f}\otimes \wt{v}^{\wt{\lambda}+\langle\Phi_{\wt{w}}\rangle}) \in C({\mk n},{\rm r}(\wt{V}(\wt{w}\cdot\wt{\lambda})^*\otimes \wt{V}(\wt{w}\cdot\wt{\lambda})))^{\lambda} \;.
$$
After this preparation we are ready to prove the theorem.

Assume first that $\varpi^{\wt{\lambda}} \ne 0$. Then, by semisimplicity, $\wt{V}(\wt{w}\cdot\wt{\lambda})^*$ contains a $G$-submodule $V^*$ isomorphic to $V(w\cdot\lambda)^*$, such that ${\rm r}(V^*\otimes \wt{v}^{\wt{\lambda}+\langle\Phi_{\wt{w}}\rangle}) \ne 0$. Let $\wt{f}\in V^*$ be such that ${\rm r}(\wt{f}\otimes\wt{v}^{\wt{\lambda}+\langle\Phi_{\wt{w}}\rangle})\ne 0$. Then
$$
\varphi_o^*(\wt{e}^*_{-\Phi_{\wt{w}}})\otimes {\rm r}(\wt{f}\otimes \wt{v}^{\wt{\lambda}+\langle\Phi_{\wt{w}}\rangle}) \in C({\mk n},V(w\cdot\lambda)^*\otimes V(w\cdot\lambda))^{\lambda} = e^*_{-\Phi_w} \otimes V(w\cdot\lambda)^*\otimes V(w\cdot\lambda)^{\lambda+\langle\Phi_w\rangle} \;.
$$
Hence $\varphi^*_o(\wt{e}^*_{-\Phi_{\wt{w}}})$ is proportional to $e^*_{-\Phi_w}$, i.e. condition (i) is satisfied.

To show that condition (ii) holds as well, recall first that we have chosen maximal compact subgroups $K\subset G$ and $\wt{K}\subset\wt{G}$ such that ${\mk k}={\mk g}\cap\wt{\mk k}$. Weyl's unitary trick implies that there exists a unique (up to a scalar multiple) $\wt{K}$-invariant Hermitian form on $\wt{V}(\wt{w}\cdot\wt{\lambda})$. Fix one such form. The inclusion ${\mk k}\subset\wt{\mk k}$ implies that this form is also $K$-invariant. Consequently, the orthogonal complement $V^\perp$ of any $K$-submodule $V\subset\wt{V}(\wt{w}\cdot\wt{\lambda})$ is also a $K$-submodule. In particular, the $K$-isotypic decomposition of $\wt{V}(\wt{w}\cdot\wt{\lambda})$ is orthogonal, say $\wt{V}(\wt{w}\cdot\wt{\lambda})=U_1\oplus\dots\oplus U_m$. Of course, the isotypic decomposition of $\wt{V}(\wt{w}\cdot\wt{\lambda})$ as a $G$-module is the same. The extreme weight vector $\wt{v}^{\wt{\lambda}+\langle\Phi_{\wt{w}}\rangle}$ can be written uniquely as
$$
\wt{v}^{\wt{\lambda}+\langle\Phi_{\wt{w}}\rangle} = u_1 + \dots + u_m \quad,\quad u_j\in U_j \;.
$$
Clearly $u_j=0$ if and only if $\wt{v}^{\wt{\lambda}+\langle\Phi_{\wt{w}}\rangle}\in U_j^\perp$. The Hermitian form on $\wt{V}(\wt{w}\cdot\wt{\lambda})$ induces a vector space isomorphism $\wt{V}(\wt{w}\cdot\wt{\lambda})\cong\wt{V}(\wt{w}\cdot\wt{\lambda})^*$, as well as a Hermitian form on $\wt{V}(\wt{w}\cdot\wt{\lambda})^*$. The (orthogonal) $G$-isotypic decomposition of the dual space is $\wt{V}(\wt{w}\cdot\wt{\lambda})^*=U_1^*\oplus\dots\oplus U_m^*$. We have $u_j=0$ if and only if $\wt{f}(\wt{v}^{\wt{\lambda}+\langle\Phi_{\wt{w}}\rangle})=0$ for all $\wt{f}\in U_j^*$. On the other hand, the latter condition is equivalent to ${\rm r}(U_j^*\otimes \wt{v}^{\wt{\lambda}+\langle\Phi_{\wt{w}}\rangle})=0$.

Observe that
$$
{\mk U}(\mk g)\wt{v}^{\wt{\lambda}+\langle\Phi_{\wt{w}}\rangle}= {\mk U}(\mk g)u_1\oplus\dots\oplus {\mk U}(\mk g)u_m \;.
$$
Now, fix $U_j$ to be the isotypic component of $V(w\cdot\lambda)$. We have assumed that ${\rm r}(U_j^*\otimes \wt{v}^{\wt{\lambda}+\langle\Phi_{\wt{w}}\rangle})\ne 0$, which can now be seen to be equivalent to $u_j\ne 0$, which in turn holds if and only if ${\mk U}(\mk g)\wt{v}^{\wt{\lambda}+\langle\Phi_{\wt{w}}\rangle}$ contains a $G$-submodule isomorphic to $V(w\cdot\lambda)$. Thus condition (ii) holds.

To prove the other direction, suppose that conditions (i) and (ii) hold. Condition (i) implies that $\iota^*\langle\Phi_{\wt{w}}\rangle=\langle\Phi_w\rangle$. Condition (ii) implies that $\wt{V}^*$ contains a $G$-submodule $V^*$ isomorphic to $V(w\cdot\lambda)^*$ and there exists $\wt{f}\in V^*$ such that $\wt{f}(\wt{v}^{\wt{\lambda}+\langle\Phi_{\wt{w}}\rangle})\ne 0$. Then ${\rm r}(\wt{f}\otimes \wt{v}^{\wt{\lambda}+\langle\Phi_{\wt{w}}\rangle}) \in V(w\cdot\lambda)^*\otimes V(w\cdot\lambda)^{\lambda+\langle\Phi_w\rangle}$ is nonzero. It follows that the pullback of the harmonic cocycle $\wt{e}^*_{\Phi_{\wt{w}}}\otimes \wt{f}\otimes \wt{v}^{\wt{\lambda}+\langle\Phi_{\wt{w}}\rangle}\in C(\wt{\mk n},\wt{V}(\wt{w}\cdot\wt{\lambda})^*\otimes \wt{V}(\wt{w}\cdot\wt{\lambda}))^{\wt{\lambda}}$ is a nonzero cocycle in $C({\mk n},V(w\cdot\lambda)^*\otimes V(w\cdot\lambda))^{\lambda}$, and hence is harmonic. This implies that $\varpi^{\wt{\lambda}}\ne 0$.
\end{proof}

\begin{zab} The two conditions in Theorem \ref{Theo Necessity} are independent in general. Examples illustrating the independence are given in Remark \ref{Zab Invar i&ii indep}. In some cases, however, {\rm (i)} implies {\rm (ii)}, so that {\rm (i)} becomes a necessary and sufficient condition for nonvanishing. We will see that this holds for the classes of regular embeddings and diagonal embeddings. Another case when ${\rm (i)}$ implies ${\rm (ii)}$ is treated in Corollary \ref{Coro Triv modules} below. Such cases are particularly nice to work with, because condition {\rm (i)} is much easier to verify for a given inclusion of Lie algebras.
\end{zab}

\subsubsection{An alternative formulation of the criterion}

Theorem \ref{Theo Necessity}, specifically condition (ii) therein, can be reformulated in geometric terms. We first introduce some notation and a technical lemma, then state the reformulation as Theorem \ref{Theo Necessity}$'$.

For $\wt{w}\in\wt{\mc W}$, let $X_{\wt{w}}^o=G\wt{w}^{-1}\wt{B}/\wt{B}$ denote the $G$-orbit through $\wt{w}^{-1}\wt{B}$ in $\wt{X}$, and let $X_{\wt{w}}=\ol{X_{\wt{w}}^o}$ denote the closure in $\wt{X}$. For $\wt{\nu}\in\wt{\mc P}^+$ let ${\mc O}_{\wt{w}}(\wt{\nu})$ denote the restriction of $\wt{\mc O}(\wt{\nu})$ to $X_{\wt{w}}$, and let
\begin{gather}\label{For pi_w^lamb}
\pi_{\wt{w}}^{\wt{\nu}} : H(\wt{X},\wt{\mc O}(\wt{\nu})) \lw H(X_{\wt{w}},{\mc O}_{\wt{w}}(\wt{\nu}))
\end{gather}
denote the corresponding pullback on cohomology.

For $\wt{\mu}\in\wt{\mc P}^+$ there is a $\wt{G}$-equivariant map
\begin{gather*}
\begin{array}{cccc}
\wt{\psi}_{\wt{\mu}} : & \wt{X} & \lw & \PP(\wt{V}(\wt{\mu})) \\
 & \wt{g}\wt{B} & \lo & [\wt{g}\wt{v}^{\wt{\mu}}]
\end{array}
\end{gather*}
Note that
$$
\wt{\psi}_{\wt{\mu}}(\wt{w}^{-1}\wt{B}) = [\wt{v}^{\wt{w}^{-1}(\wt{\mu})}] \;.
$$

\begin{lemma}\label{Lem Im(pi_w)=Gv^w(mu)}
Let $\wt{w}\in\wt{\mc W}$ and $\wt{\mu}\in\wt{\mc P}^+$. Then there is a $G$-module isomorphism
$$
Im(\pi_{\wt{w}}^{\wt{\mu}}) \cong ({\mk U}(\mk g)\wt{v}^{\wt{w}^{-1}(\wt{\mu})})^* \;,
$$
where $\wt{v}^{\wt{w}^{-1}(\wt{\mu})}$ is a non-zero vector in $\wt{V}(\wt{\mu})$ of the extreme weight $\wt{w}^{-1}(\wt{\mu})$ and ${\mk U}(\mk g)\wt{v}^{\wt{w}^{-1}(\wt{\mu})}$ is the $G$-submodule of $\wt{V}(\wt{\mu})$ generated by this vector.
\end{lemma}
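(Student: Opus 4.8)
The plan is to identify both sides with spaces of linear forms on projective space via the Borel--Weil embedding. Write $\wt{v}=\wt{v}^{\wt{w}^{-1}(\wt{\mu})}$ and $W={\mk U}(\mk g)\wt{v}\subset\wt{V}(\wt{\mu})$. Since $\wt{\mu}$ is dominant, the Borel--Weil--Bott theorem gives $H(\wt{X},\wt{\mc O}(\wt{\mu}))=H^0(\wt{X},\wt{\mc O}(\wt{\mu}))\cong\wt{V}(\wt{\mu})^*$, and a direct computation with homogeneous bundles shows $\wt{\psi}_{\wt{\mu}}^*{\mc O}_{\PP}(1)\cong\wt{\mc O}(\wt{\mu})$. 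The cone over the image of $\wt{\psi}_{\wt{\mu}}$ is $\CC^*\cdot\wt{G}\wt{v}^{\wt{\mu}}$, whose linear span is the $\wt{G}$-submodule generated by $\wt{v}^{\wt{\mu}}$, i.e.\ all of $\wt{V}(\wt{\mu})$; hence the induced pullback $\wt{\psi}_{\wt{\mu}}^*:\wt{V}(\wt{\mu})^*=H^0(\PP(\wt{V}(\wt{\mu})),{\mc O}(1))\to H^0(\wt{X},\wt{\mc O}(\wt{\mu}))$ is injective, and being an equivariant map onto an irreducible $\wt{G}$-module it is an isomorphism.

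Next I would restrict $\wt{\psi}_{\wt{\mu}}$ to $X_{\wt{w}}$. By $G$-equivariance and the identity $\wt{\psi}_{\wt{\mu}}(\wt{w}^{-1}\wt{B})=[\wt{v}]$, the image of the open orbit $X_{\wt{w}}^o$ is $G\cdot[\wt{v}]$, so $\wt{\psi}_{\wt{\mu}}(X_{\wt{w}})=\ol{G\cdot[\wt{v}]}$ (morphisms of projective varieties are closed). The linear span of $G\cdot\wt{v}$ is the smallest $\mk g$-submodule of $\wt{V}(\wt{\mu})$ containing $\wt{v}$ — here connectedness of $G$ is used — namely $W$; thus $\ol{G\cdot[\wt{v}]}\subset\PP(W)$ and is linearly nondegenerate there. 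Consequently $\wt{\psi}_{\wt{\mu}}|_{X_{\wt{w}}}$ factors as $X_{\wt{w}}\xrightarrow{\psi'}\PP(W)\xrightarrow{j}\PP(\wt{V}(\wt{\mu}))$ with $j$ the linear embedding and $\psi'$ a $G$-equivariant morphism with nondegenerate image; in particular $\psi'^*{\mc O}_{\PP(W)}(1)\cong{\mc O}_{\wt{w}}(\wt{\mu})$.

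Now functoriality of cohomological pullback yields the commuting identity $\pi_{\wt{w}}^{\wt{\mu}}\circ\wt{\psi}_{\wt{\mu}}^*=\psi'^*\circ j^*$, where $j^*:\wt{V}(\wt{\mu})^*\to W^*$ is restriction of linear forms — the canonical $G$-equivariant surjection dual to $W\hookrightarrow\wt{V}(\wt{\mu})$. Since $\wt{\psi}_{\wt{\mu}}^*$ is bijective and $j^*$ is surjective, $\mathrm{Im}(\pi_{\wt{w}}^{\wt{\mu}})=\mathrm{Im}\bigl(\psi'^*:W^*\to H^0(X_{\wt{w}},{\mc O}_{\wt{w}}(\wt{\mu}))\bigr)$. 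Finally $\psi'^*$ is injective: a section of ${\mc O}_{\PP(W)}(1)$ is a linear form $\ell\in W^*$, and $\psi'^*\ell=0$ means $\ell$ vanishes on $\psi'(X_{\wt{w}})$, which spans $\PP(W)$, forcing $\ell=0$. Hence $\mathrm{Im}(\pi_{\wt{w}}^{\wt{\mu}})\cong W^*=({\mk U}(\mk g)\wt{v}^{\wt{w}^{-1}(\wt{\mu})})^*$ as $G$-modules, every map in the argument being $G$-equivariant.

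The genuinely delicate point is the spanning claim for $\ol{G\cdot[\wt{v}]}$ — that, even after taking the closure in $\PP(\wt{V}(\wt{\mu}))$, this $G$-variety lies in $\PP(W)$ and is nondegenerate there, so that the only linear form on $W$ killing it is $0$. Everything else (the identifications $\wt{\psi}_{\wt{\mu}}^*{\mc O}(1)\cong\wt{\mc O}(\wt{\mu})$, $H^0(\PP(\wt{V}),{\mc O}(1))=\wt{V}^*$, vanishing of higher cohomology of $\wt{\mc O}(\wt{\mu})$, and functoriality of pullbacks) is standard from the Borel--Weil picture recalled in Section~\ref{Sec Preliminar}, and I expect no real obstruction there.
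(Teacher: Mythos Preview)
Your argument is correct and is precisely the unpacking of what the paper intends: its proof consists of the single sentence ``The lemma follows immediately from the definitions.'' What you have written is the natural elaboration --- identify $H^0(\wt{X},\wt{\mc O}(\wt{\mu}))$ with $\wt{V}(\wt{\mu})^*$ via the Borel--Weil embedding $\wt{\psi}_{\wt{\mu}}$, observe that the kernel of restriction to $X_{\wt{w}}$ is the annihilator of the linear span of $\wt{\psi}_{\wt{\mu}}(X_{\wt{w}})=\ol{G\cdot[\wt{v}]}$, and note that this span is exactly $W={\mk U}(\mk g)\wt{v}$. So approach and content coincide; you have simply made explicit the details the paper leaves to the reader.
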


\begin{proof} The lemma follows immediately from the definitions.
\end{proof}

Lemma \ref{Lem Im(pi_w)=Gv^w(mu)} provides an equivalent substitute for condition (ii) of Theorem \ref{Theo Necessity} and allows us to formulate the following version of the criterion for nonvanishing of $\pi^{\wt{\lambda}}$.\\

\noindent{\bf Theorem \ref{Theo Necessity}}$'$. {\it
Let $\wt{\lambda},\lambda,\wt{w},w$ be as in the beginning of section \ref{Sec Criterion}. The pullback $\varpi^{\wt{\lambda}}$ (or, equivalently, $\pi^{\wt{\lambda}}$) is nonzero if and only if the following two conditions hold:

{\rm (i)} $\varphi_o^*(\wt{e}^*_{-\Phi_{\wt{w}}}) = a e^*_{-\Phi_{w}}$, for some nonzero complex number $a$.

{\rm (ii)}$'$ ${\rm Hom}_G(V(w\cdot\lambda)^*,Im(\pi_{\wt{w}}^{\wt{w}\cdot\wt{\lambda}})) \ne 0$, i.e. the space $H^0(\wt{X},\wt{\mc O}(\wt{w}\cdot\wt{\lambda}))$ contains a $G$-submodule isomorphic to $V(w\cdot\lambda)^*$ whose elements do not vanish identically on $X_{\wt{w}}$.
}\\

\subsubsection{Corollaries}

Below we formulate some corollaries from Theorems \ref{Theo Necessity} and \ref{Theo Necessity}$'$.

\begin{coro} Let $\mu\in{\mc P}^+$ and $\wt{\mu}\in\wt{\mc P}^+$. Suppose that $\wt{V}(\wt{\mu})$ contains $V(\mu)$ as a cohomological component, i.e. the projection $V(\wt{\mu})^*\rw V(\mu)^*$ can be realized as a cohomological pullback $\pi^{\wt{\sigma}\cdot\wt{\mu}}: H(\wt{X},{\mc O}(\wt{\sigma}\cdot\wt{\mu})) \rw H(X,{\mc O}(\sigma\cdot\mu))$, with $\wt{\sigma}\in\wt{\mc W}$ and $\sigma\in{\mc W}$. Then $V(\mu)\subset {\mk U}(\mk g)\wt{v}^{\wt{\sigma}(\wt{\mu})}$.
\end{coro}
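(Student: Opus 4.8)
The plan is to read the statement off directly from condition (ii) of theorem \ref{Theo Necessity}, after matching notation. Set $\wt{\lambda} = \wt{\sigma}\cdot\wt{\mu}$ and $\lambda = \iota^*(\wt{\lambda})$, so that by hypothesis $\lambda = \sigma\cdot\mu$ and the cohomological pullback realizing the component is $\pi^{\wt{\lambda}}$. Since $\wt{\mu}$ and $\mu$ are dominant, the weights $\wt{\mu}+\wt{\rho}$ and $\mu+\rho$ are strictly dominant, hence regular elements; in particular $\wt{\lambda}$ and $\lambda$ are regular weights and the affine stabilizers of $\wt{\mu}$ in $\wt{\mc W}$ and of $\mu$ in ${\mc W}$ are trivial. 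From $\wt{w}_{\wt{\lambda}}\cdot\wt{\lambda} = \wt{\mu} = \wt{\sigma}^{-1}\cdot\wt{\lambda}$ we then get $\wt{w}_{\wt{\lambda}} = \wt{\sigma}^{-1}$, and likewise $w_\lambda = \sigma^{-1}$. Writing $\wt{w} = \wt{w}_{\wt{\lambda}}$ and $w = w_\lambda$ as in section \ref{Sec Criterion}, we have $\wt{w}\cdot\wt{\lambda} = \wt{\mu}$ and $w\cdot\lambda = \mu$.

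Next I would identify the extreme weight vector occurring in condition (ii). By (\ref{For w^{-1}(w.lambda)}) applied to $\wt{w}$, namely $\wt{w}^{-1}(\wt{w}\cdot\wt{\lambda}) = \wt{\lambda} - \wt{w}^{-1}\cdot 0 = \wt{\lambda} + \langle\Phi_{\wt{w}}\rangle$, together with $\wt{w}\cdot\wt{\lambda} = \wt{\mu}$ and $\wt{w}^{-1} = \wt{\sigma}$, we obtain
$$
\wt{\lambda} + \langle\Phi_{\wt{w}}\rangle = \wt{w}^{-1}(\wt{\mu}) = \wt{\sigma}(\wt{\mu}) \;.
$$
Hence the extreme weight vector $\wt{v}^{\wt{\lambda}+\langle\Phi_{\wt{w}}\rangle}$ appearing in theorem \ref{Theo Necessity}(ii) is exactly the vector $\wt{v}^{\wt{\sigma}(\wt{\mu})}$ of $\wt{V}(\wt{\mu})$.

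Finally, the hypothesis that $V(\mu)\subset\wt{V}(\wt{\mu})$ is the image of $(\pi^{\wt{\lambda}})^*$ means precisely that $\pi^{\wt{\lambda}}$ is nonzero; by theorem \ref{Theo Necessity} it then satisfies condition (ii), i.e. ${\rm Hom}_G\bigl(V(w\cdot\lambda),{\mk U}(\mk g)\wt{v}^{\wt{\lambda}+\langle\Phi_{\wt{w}}\rangle}\bigr) \ne 0$. Substituting $w\cdot\lambda = \mu$ and $\wt{\lambda}+\langle\Phi_{\wt{w}}\rangle = \wt{\sigma}(\wt{\mu})$ gives ${\rm Hom}_G\bigl(V(\mu),{\mk U}(\mk g)\wt{v}^{\wt{\sigma}(\wt{\mu})}\bigr) \ne 0$, which is the claimed inclusion $V(\mu)\subset{\mk U}(\mk g)\wt{v}^{\wt{\sigma}(\wt{\mu})}$. (Equivalently one may invoke theorem \ref{Theo Necessity}$'$ and lemma \ref{Lem Im(pi_w)=Gv^w(mu)}, which identifies $Im(\pi_{\wt{w}}^{\wt{\mu}})$ with $({\mk U}(\mk g)\wt{v}^{\wt{w}^{-1}(\wt{\mu})})^* = ({\mk U}(\mk g)\wt{v}^{\wt{\sigma}(\wt{\mu})})^*$, whose dual is the assertion.) There is essentially no obstacle here: the corollary is just a reformulation of condition (ii), and the only point that needs a word of care is the identification $\wt{w} = \wt{\sigma}^{-1}$ — equivalently, that $\wt{\mu}$ is a regular weight — which is automatic for dominant $\wt{\mu}$ under the paper's conventions.
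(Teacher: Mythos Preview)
Your proposal is correct and is exactly the approach the paper takes: the paper's own proof is the single sentence ``The corollary follows directly from the proof of theorem \ref{Theo Necessity},'' and you have simply spelled out that deduction in detail, correctly identifying $\wt{w}=\wt{\sigma}^{-1}$, $w=\sigma^{-1}$, and matching the extreme weight $\wt{\lambda}+\langle\Phi_{\wt{w}}\rangle$ with $\wt{\sigma}(\wt{\mu})$ via formula (\ref{For w^{-1}(w.lambda)}).
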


\begin{proof} The corollary follows directly from the proof of Theorem \ref{Theo Necessity}.
\end{proof}

\begin{coro}\label{Coro Triv modules}
In the notation of Theorem \ref{Theo Necessity}, suppose $\wt{\lambda}=\wt{w}^{-1}\cdot 0$. Then condition {\rm (i)} is necessary and sufficient for the nonvanishing of $\varpi^{\wt{\lambda}}$ (or equivalently of $\pi^{\wt{\lambda}}$). Furthermore, if condition ${\rm (i)}$ holds, then $\pi^{\wt{\lambda}}$ is a nonzero morphism from the one dimensional $\wt{G}$-module to the one dimensional $G$-module.
\end{coro}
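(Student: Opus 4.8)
The plan is to reduce everything to theorem \ref{Theo Necessity} and to show that, under the extra hypothesis $\wt{\lambda} = \wt{w}^{-1}\cdot 0$, condition (i) of that theorem already forces condition (ii). First I would translate the hypothesis into a statement about weights. By (\ref{For w^{-1}(w.lambda)}) we have $\wt{\lambda} = \wt{w}^{-1}\cdot 0 = -\langle\Phi_{\wt{w}}\rangle$, so $\wt{\lambda}$ is regular with $\wt{w}_{\wt{\lambda}} = \wt{w}$ (indeed $\wt{\lambda}+\wt{\rho} = \wt{w}^{-1}(\wt{\rho})$ lies in the Weyl orbit of $\wt{\rho}$ and $\wt{w}\cdot\wt{\lambda} = \wt{w}\cdot(\wt{w}^{-1}\cdot 0) = 0 \in \wt{\mc P}^+$), and hence $\wt{V}(\wt{w}\cdot\wt{\lambda}) = \wt{V}(0)$ is the trivial one-dimensional $\wt{G}$-module. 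Moreover, again by (\ref{For w^{-1}(w.lambda)}), the extreme weight occurring in condition (ii) is $\wt{\lambda}+\langle\Phi_{\wt{w}}\rangle = \wt{w}^{-1}(\wt{w}\cdot\wt{\lambda}) = \wt{w}^{-1}(0) = 0$, so $\wt{v}^{\wt{\lambda}+\langle\Phi_{\wt{w}}\rangle}$ spans $\wt{V}(0)$ and ${\mk U}(\mk g)\wt{v}^{\wt{\lambda}+\langle\Phi_{\wt{w}}\rangle}$ is the trivial $G$-module $\CC$.

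Next I would invoke the fact, already extracted inside the proof of theorem \ref{Theo Necessity}, that condition (i) entails the equality of $\mk h$-weights $\iota^*\langle\Phi_{\wt{w}}\rangle = \langle\Phi_w\rangle$: this comes from comparing the $\mk h$-weight $\langle\Phi_{\wt{w}}\rangle$ of $\wt{e}^*_{-\Phi_{\wt{w}}}$, restricted via $\iota^*$, with the $\mk h$-weight $\langle\Phi_w\rangle$ of $e^*_{-\Phi_w}$, using that $\varphi_o^*$ is $\mk h$-equivariant and graded. Combining this with $\lambda = \iota^*(\wt{\lambda}) = -\iota^*\langle\Phi_{\wt{w}}\rangle = -\langle\Phi_w\rangle = w^{-1}\cdot 0$ gives $w\cdot\lambda = w\cdot(w^{-1}\cdot 0) = 0$, so $V(w\cdot\lambda) = V(0) = \CC$ as well. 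Condition (ii) then reads ${\rm Hom}_G(\CC,\CC)\ne 0$, which is automatic; hence by theorem \ref{Theo Necessity}, $\varpi^{\wt{\lambda}}$ (equivalently $\pi^{\wt{\lambda}}$) is nonzero exactly when (i) holds, which is the first assertion. For the final assertion: when (i) holds, gradedness of $\varphi_o^*$ forces $\abs{\Phi_{\wt{w}}} = \abs{\Phi_w}$, i.e. the cohomological degrees $\wt{l}(\wt{\lambda})$ and $l(\lambda)$ coincide, while by the computations above the source $H(\wt{X},\wt{\mc O}(\wt{\lambda})) \cong \wt{V}(0)^* = \CC$ and the target $H(X,{\mc O}(\lambda)) \cong V(0)^* = \CC$ are one-dimensional trivial modules; so $\pi^{\wt{\lambda}}$, being identified with the nonzero $\varpi^{\wt{\lambda}}$ via theorem \ref{Prop Translate}, is the asserted nonzero morphism.

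I do not expect a genuine obstacle here: the entire content is the bookkeeping of the affine action applied to the weight $0$, together with the already-available implication that (i) yields $\iota^*\langle\Phi_{\wt{w}}\rangle = \langle\Phi_w\rangle$. The only point deserving a moment of care is checking that the hypothesis is consistent with the standing assumptions of theorem \ref{Theo Necessity} — namely that $\wt{w} = \wt{w}_{\wt{\lambda}}$ and that $\lambda$ (which is forced to equal $w^{-1}\cdot 0$ once (i) is assumed) is regular with $w_\lambda = w$ — and this follows directly from $\wt{w}\cdot\wt{\lambda} = 0$ and $w\cdot\lambda = 0$ being dominant. (Alternatively, one could phrase the same argument through theorem \ref{Theo Necessity}$'$ and lemma \ref{Lem Im(pi_w)=Gv^w(mu)}, since $\mathrm{Im}(\pi_{\wt{w}}^{\wt{w}\cdot\wt{\lambda}}) \cong ({\mk U}(\mk g)\wt{v}^{\wt{w}^{-1}(0)})^* = \CC$, but the direct route via theorem \ref{Theo Necessity} is shortest.)
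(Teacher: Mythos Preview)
Your proposal is correct and follows essentially the same route as the paper's main argument: it reduces to theorem \ref{Theo Necessity} by observing that $\wt{V}(\wt{w}\cdot\wt{\lambda})=\wt{V}(0)=\CC$, then uses the implication (i) $\Rightarrow$ $\iota^*\langle\Phi_{\wt{w}}\rangle=\langle\Phi_w\rangle$ to force $V(w\cdot\lambda)=\CC$, making condition (ii) trivially satisfied. The paper states this in one sentence and additionally sketches a second, independent proof working directly with the pullback $\varpi_o:H(\wt{\mk n})\to H({\mk n})$ on Lie algebra cohomology with trivial coefficients, where the harmonic weight-space decomposition (\ref{For LieCohTriv}) makes condition (i) visibly equivalent to nonvanishing; your write-up does not pursue this alternative, but it is not needed for the corollary.
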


\begin{proof}
The corollary follows immediately from Theorem \ref{Theo Necessity}, in view of the fact that $H(\wt{X}, \wt{\mc O}(\wt{\lambda})) \cong \CC$ if and only if $\wt{\lambda}=\wt{\sigma}\cdot 0$ for some $\wt{\sigma}\in\wt{\mc W}$. A simpler, independent proof is sketched below.

The cohomological morphisms between trivial modules are all captured in the pullback of Lie algebra cohomology with trivial coefficients:
\begin{gather*}
\begin{array}{cccc}
\varpi_o :& H(\wt{\mk n}) & \lw & H(\mk n) \\
        &  [\eta] & \lo & [\varphi_o^*(\eta)] \;,
\end{array}
\end{gather*}
where, as usual, $H(\mk n)$ stands for $H({\mk n},\CC)$. We have $H({\mk n})=H({\mk n},{\mc F}(K))^{\mk g}$ with respect to the left ${\mk g}$-action. Similarly $H(\wt{\mk n})=H(\wt{\mk n},{\mc F}(\wt{K}))^{\wt{\mk g}}$. The weight space decompositions of the two cohomology spaces above, as right modules over $\wt{\mk h}$ and ${\mk h}$ respectively, are:
\begin{gather}\label{For LieCohTriv}
\begin{array}{ll}
H(\wt{\mk n}) = \oplus_{\wt{\sigma}\in\wt{\mc W}} H(\wt{\mk n})^{\wt{\sigma}\cdot 0} \quad ,& \quad  H(\wt{\mk n})^{\wt{\sigma}\cdot 0} = \CC[\wt{e}^*_{-\Phi_{\wt{\sigma}^{-1}}}]\; ,\\
& \\
H(\mk n) = \oplus_{\sigma\in{\mc W}} H(\mk n)^{\sigma\cdot 0} \quad ,& \quad H(\mk n)^{\sigma\cdot 0} = \CC[e^*_{-\Phi_{\sigma^{-1}}}]\; .
\end{array}
\end{gather}
The map $\varpi_o$ is ${\mk h}$-equivariant and its nonvanishing on a given $\wt{\mk h}$-weight space corresponds to a cohomological morphism between trivial modules.
\end{proof}

In the next corollary we present a sufficient condition for nonvanishing of $\pi^{\wt{\lambda}}$, deduced from Theorem \ref{Theo Necessity} and a result of Montagard, Pasquier and Ressayre, \cite{Ressayre-et-al}. We need some preliminaries.

The homology of the flag manifold $X$ with integral coefficients is classically computed using the Schubert cell decomposition. The Schubert cells are the $B$-orbits in $X$. They are parametrized by ${\mc W}$: the cell decomposition is $X=\sqcup_{w\in{\mc W}} BwB/B$; the dimension of the orbit $BwB/B$ is $l(w)$. Let $\Omega_w=\ol{BwB/B}$ be closure in $X$. The Schubert cycles $\Omega_w$ generate $H_{\cdot}(X,\ZZ)$ and form a basis for $H_{\cdot}(X,\CC)$. Similarly, the Schubert cycles $\wt{\Omega}_{\wt{w}}=\ol{\wt{B}\wt{w}\wt{B}/\wt{B}}$ for $\wt{w}\in\wt{\mc W}$ generate $H_{\cdot}(\wt{X},\ZZ)$.

Any algebraic subvariety of $Y\subset \wt{X}$ is a cocycle, and its cohomology class $[Y]$ may be expressed as a non-negative linear combination of $\wt{\Omega}_{\wt{w}}$:
$$
[Y]=\sum\limits_{\wt{w}\in\wt{\mc W}} c_{\wt{w}}(Y) \wt{\Omega}_{\wt{w}} \;, \quad c_{\wt{w}}(Y)\in\ZZ_{\geq 0} \;.
$$
A subvariety $Y\subset\wt{X}$ is called multiplicity free, if $c_{\wt{w}}(Y)\in\{0,1\}$ for all $\wt{w}\in\wt{\mc W}$.

\begin{coro}\label{Coro Multi-free}
Let $\wt{\lambda}\in{\mc P}$ and put $\lambda=\iota^*(\wt{\lambda})$. Suppose that both $\wt{\lambda}$ and $\lambda$ are regular and denote $\wt{w}=\wt{w}_{\wt{\lambda}}$ and $w=w_\lambda$. Suppose further that the following two conditions hold:

{\rm (i)} $\varphi_o^*(\wt{e}^*_{-\Phi_{\wt{w}}}) = a e^*_{-\Phi_w}$ for some $a\in\CC^\times$.

{\rm (ii)} The closure $X_{\wt{w}}=\ol{G\wt{w}^{-1}\wt{B}/\wt{B}}$ of the $G$-orbit through $\wt{w}^{-1}$ in $\wt{X}$ is multiplicity free.

Then the pullback $\pi^{\wt{\lambda}}:H(\wt{X},\wt{\mc O}(\wt{\lambda})) \lw H(X,{\mc O}(\lambda))$ is non-zero.
\end{coro}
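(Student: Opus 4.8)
The plan is to derive this corollary from Theorem~\ref{Theo Necessity} (in its geometric form, Theorem~\ref{Theo Necessity}$'$) by verifying that condition (ii)$'$ follows automatically from the multiplicity-freeness hypothesis, once condition (i) is granted. So condition (i) is assumed and nothing needs to be done with it; the entire content of the proof is to produce a $G$-submodule of $H^0(\wt{X},\wt{\mc O}(\wt{\mu}))$ isomorphic to $V(w\cdot\lambda)^*$ whose sections do not all vanish on $X_{\wt{w}}$, where $\wt{\mu}=\wt{w}\cdot\wt{\lambda}$. Equivalently, by Lemma~\ref{Lem Im(pi_w)=Gv^w(mu)}, we must show that the $G$-submodule ${\mk U}(\mk g)\wt{v}^{\wt{w}^{-1}(\wt{\mu})}$ of $\wt{V}(\wt{\mu})$ contains an irreducible component isomorphic to $V(w\cdot\lambda)$; this is precisely condition (ii) of Theorem~\ref{Theo Necessity}.

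First I would recall how condition (i) pins down the relevant weight: as noted in the proof of Theorem~\ref{Theo Necessity}, (i) forces $\iota^*\langle\Phi_{\wt{w}}\rangle = \langle\Phi_w\rangle$, and hence $\iota^*(\wt{w}^{-1}(\wt{\mu})) = \iota^*(\wt{\lambda}+\langle\Phi_{\wt{w}}\rangle) = \lambda+\langle\Phi_w\rangle = w^{-1}(w\cdot\lambda)$, using the formulae in (\ref{For w^{-1}(w.lambda)}). So the extreme weight vector $\wt{v}^{\wt{w}^{-1}(\wt{\mu})}$ restricts, under $\iota^*$ on weights, to the extreme weight $w^{-1}(w\cdot\lambda)$ of $V(w\cdot\lambda)$; this is the only candidate $G$-component that can occur, and the question is whether it actually does.

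The main step is to invoke the theorem of Montagard, Pasquier and Ressayre \cite{Ressayre-et-al}: when a Schubert-type or orbit-closure subvariety is multiplicity free, the restriction map $H^0(\wt{X},\wt{\mc O}(\wt{\mu})) \to H^0(X_{\wt{w}},{\mc O}_{\wt{w}}(\wt{\mu}))$ is surjective, or equivalently $\Gamma(X_{\wt{w}},{\mc O}_{\wt{w}}(\wt{\mu}))$ is spanned by restrictions of global sections, and moreover the cohomology of line bundles on $X_{\wt{w}}$ behaves like that of a flag variety so that $H^0(X_{\wt{w}},{\mc O}_{\wt{w}}(\wt{\mu}))$ is a cyclic $G$-module generated by the image of the section corresponding to $\wt{v}^{\wt{w}^{-1}(\wt{\mu})}$. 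Concretely, under the map $\wt{\psi}_{\wt{\mu}}:\wt{X}\to\PP(\wt{V}(\wt{\mu}))$ the orbit closure $X_{\wt{w}}$ maps onto the closure of the $G$-orbit of the line $[\wt{v}^{\wt{w}^{-1}(\wt{\mu})}]$, and $\Gamma(X_{\wt{w}},{\mc O}_{\wt{w}}(\wt{\mu}))$ is the dual of the linear span of $G\cdot\wt{v}^{\wt{w}^{-1}(\wt{\mu})}$, i.e. the dual of ${\mk U}(\mk g)\wt{v}^{\wt{w}^{-1}(\wt{\mu})}$. Multiplicity-freeness guarantees this restriction is surjective with no loss, so ${\mk U}(\mk g)\wt{v}^{\wt{w}^{-1}(\wt{\mu})}$ is as large as possible and in particular the extreme weight $w^{-1}(w\cdot\lambda)$ computed above really is a highest weight of a component — hence $V(w\cdot\lambda)$ occurs in it. That is condition (ii), and with (i) also assumed Theorem~\ref{Theo Necessity} gives $\pi^{\wt{\lambda}}\ne 0$.

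The main obstacle I anticipate is making the appeal to \cite{Ressayre-et-al} precise: one must check that the multiplicity-free hypothesis on the orbit closure $X_{\wt{w}}$ is exactly the hypothesis under which their surjectivity/cyclicity statement applies, and that their result is stated for (or immediately extends to) the possibly singular $G$-orbit closures $X_{\wt{w}}$ rather than only genuine Schubert varieties. One should also be careful that $H^0(X_{\wt{w}},{\mc O}_{\wt{w}}(\wt{\mu}))$ is nonzero, i.e. that the restriction of the very ample-ish $\wt{\mc O}(\wt{\mu})$ to $X_{\wt{w}}$ has sections, which follows since $\wt{\mu}$ is dominant and $X_{\wt{w}}$ meets the open $\wt{B}$-orbit; and that the $G$-submodule produced is genuinely isomorphic to $V(w\cdot\lambda)^*$ and its sections do not all vanish on $X_{\wt{w}}$, which is automatic from the cyclicity. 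Once these technical points are in place the deduction is short, since everything else is bookkeeping with the weight identity $\iota^*(\wt{w}^{-1}(\wt{\mu})) = w^{-1}(w\cdot\lambda)$ coming from condition (i).
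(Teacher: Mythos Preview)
Your proposal is correct and follows essentially the same route as the paper: both reduce to Theorem~\ref{Theo Necessity}$'$, note that condition~(i) is assumed, and then deduce condition~(ii)$'$ from the multiplicity-freeness hypothesis via the result of Montagard, Pasquier and Ressayre \cite{Ressayre-et-al}. The paper is terser --- it simply cites Theorem~3 of \cite{Ressayre-et-al} and observes that its proof actually yields the stronger statement that $\pi_{\wt{w}}^{\wt{\mu}}$ is surjective with image containing a copy of $V(\mu)^*$ --- while you add the (correct) weight computation $\iota^*(\wt{w}^{-1}(\wt{\mu}))=w^{-1}(w\cdot\lambda)$ and some honest caveats about matching hypotheses, but the substance is the same.
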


\begin{proof} Condition (i) of the present corollary is the same as condition (i) of Theorem \ref{Theo Necessity}$'$. To prove the corollary it is sufficient to show that condition (ii) of the present corollary implies condition (ii)$'$ of Theorem \ref{Theo Necessity}$'$.

Denote $\wt{\mu}=\wt{w}\cdot\wt{\lambda}$ and $\mu=w\cdot\lambda$, so that $H(\wt{X},\wt{\mc O}(\wt{\lambda}))\cong\wt{V}(\wt{\mu})^*$ and $H(X,{\mc O}(\lambda))\cong V(\mu)^*$ by the Borel-Weil-Bott theorem. We invoke Theorem 1 from \cite{Ressayre-et-al}, which states that, if $X_{\wt{w}}$ is multiplicity free in $\wt{X}$, then we have ${\rm Hom}_G(V(\mu),\wt{V}(\wt{\mu}))\ne 0$. The proof actually contains a stronger result. Namely, the assertion that the pullback (see (\ref{For pi_w^lamb}) for the notation)
\begin{gather*}
\pi_{\wt{w}}^{\wt{\mu}} : H(\wt{X},\wt{\mc O}(\wt{\mu})) \lw H(X_{\wt{w}},{\mc O}_{\wt{w}}(\wt{\mu}))
\end{gather*}
is surjective and its image contains a $G$-submodule isomorphic to $V(\mu)^*$. In particular, condition (ii)$'$ of Theorem \ref{Theo Necessity}$'$ holds.
\end{proof}

\begin{zab}\label{Zab Mult-Free}
There are classes of inclusions $\iota:{\mk g}\hookrightarrow\wt{\mk g}$ for which the closure of any $G$-orbit in $\wt{G}/\wt{B}$ is multiplicity free. For instance, when the image of $G$ in $\wt{G}$ is a spherical subgroup of minimal rank, cf \cite{Ressayre-SphericMinRank}. In such a case, condition (i) of Theorem \ref{Coro Multi-free} becomes a necessary and sufficient condition for nonvanishing of $\pi^{\wt{\lambda}}$. We shall discuss one such case independently, namely, the diagonal embedding $\phi:G\hookrightarrow G\times G$, in Section \ref{Sec Diagonal}.
\end{zab}

\subsection{Properties of cohomological components}\label{Sec PropCohCom}

Recall that ${\rm E}$ denotes the real span of $\Delta$ in ${\mk h}^*$. Similarly, let $\wt{\rm E}$ denote the real span of $\wt{\Delta}$ in $\wt{\mk h}^*$. Then ${\mc P}\times\wt{\mc P}\subset{\rm E}\oplus\wt{\rm E}$. Consider the monoid ${\mc D}={\mc P}^+\times\wt{\mc P}^+$ whose elements are pairs of dominant weights for $G$ and $\wt{G}$, respectively. Problem III stated in the introduction is concerned with the characterization of the set ${\mc C}={\mc C}(\varphi)$ of elements $(\mu,\wt{\mu})\in{\mc D}$ such that there exists a cohomological component $V(\mu)\subset\wt{V}(\wt{\mu})$ associated with the embedding $\varphi:X\hookrightarrow\wt{X}$. This goal is not achieved here, however, we prove some important facts about ${\mc C}$ showing that this set has structure. We start with the following technical lemma.

\begin{lemma}\label{Lem D(w,wt(w))}
Let ${\mc D}={\mc P}^+\times\wt{\mc P}^+$. Suppose that $\wt{w}\in\wt{\mc W}$ and $w\in{\mc W}$ satisfy condition {\rm (i)} of Theorem \ref{Theo Necessity}, i.e. there exists $a\in\CC^\times$ such that\\

\noindent {\rm (i)} \indent $\varphi_o^*(\wt{e}^*_{-\Phi_{\wt{w}}}) = a e^*_{-\Phi_w}$.\\

\noindent The set
$$
{\mc D}_{w,\wt{w}} = \{ (\mu,\wt{\mu})\in{\mc D} : \mu=w\cdot(\iota^*(\wt{w}^{-1}\cdot\wt{\mu})) \}
$$
is a finitely generated submonoid of ${\mc D}$.
\end{lemma}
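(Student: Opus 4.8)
The plan is to reduce the defining relation of ${\mc D}_{w,\wt{w}}$ to a \emph{homogeneous} linear equation in the pair $(\mu,\wt\mu)$, and then to apply the standard lattice‑points‑in‑a‑rational‑cone argument (Gordan's lemma). First I would unwind the two affine actions. Writing $\wt w^{-1}\cdot\wt\mu = \wt w^{-1}(\wt\mu) + \wt w^{-1}\cdot 0$ and using formula $(\ref{For w^{-1}(w.lambda)})$, namely $\wt w^{-1}\cdot 0 = -\langle\Phi_{\wt w}\rangle$, together with the observation recorded in the proof of Theorem $\ref{Theo Necessity}$ that condition (i) forces $\iota^*\langle\Phi_{\wt w}\rangle = \langle\Phi_w\rangle$ (the map $\varphi_o^*$ is $T$‑equivariant, so the $T$‑weights $-\iota^*\langle\Phi_{\wt w}\rangle$ and $-\langle\Phi_w\rangle$ must coincide), one obtains $\iota^*(\wt w^{-1}\cdot\wt\mu) = (\iota^*\circ\wt w^{-1})(\wt\mu) - \langle\Phi_w\rangle$. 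Applying the affine action $w\cdot(-)$ and using $w\bigl(\rho - \langle\Phi_w\rangle\bigr) = w(w^{-1}\rho) = \rho$ — which is again $(\ref{For w^{-1}(w.lambda)})$ — the $\rho$‑terms cancel and the relation defining ${\mc D}_{w,\wt w}$ collapses to
$$
\mu = (w\circ\iota^*\circ\wt w^{-1})(\wt\mu).
$$
I expect this cancellation of the inhomogeneous ($\rho$‑dependent) terms to be the only step requiring care; it is the one place where condition (i) is genuinely used.

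With this in hand, ${\mc D}_{w,\wt w} = {\mc D}\cap L$, where $L\subset{\rm E}\oplus\wt{\rm E}$ is the graph of the integral linear map $w\circ\iota^*\circ\wt w^{-1}:\wt{\rm E}\to{\rm E}$ — integral because $\wt w^{-1}$, $\iota^*$ and $w$ each respect the weight lattices — so $L$ is a rational linear subspace. Since $L$ is a linear subspace and ${\mc D}={\mc P}^+\times\wt{\mc P}^+$ is a submonoid of the lattice ${\mc P}\times\wt{\mc P}$ containing $0$, it is immediate that ${\mc D}_{w,\wt w}$ contains $(0,0)$ and is closed under addition, hence is a submonoid of ${\mc D}$.

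Finally, for finite generation I would write ${\mc D} = ({\mc P}\times\wt{\mc P})\cap\sigma$, where $\sigma$ is the rational polyhedral cone ${\mc P}^+_{\RR}\times\wt{\mc P}^+_{\RR}$ in ${\rm E}\oplus\wt{\rm E}$, whence
$$
{\mc D}_{w,\wt w} = \bigl(({\mc P}\times\wt{\mc P})\cap L\bigr)\cap(\sigma\cap L).
$$
Here $({\mc P}\times\wt{\mc P})\cap L$ is a full‑rank lattice in the rational subspace $L$ (indeed it projects isomorphically onto $\wt{\mc P}$, since $\wt\mu$ determines $\mu$), and $\sigma\cap L$ is a rational polyhedral cone in $L$; by Gordan's lemma the monoid of lattice points of such a cone is finitely generated, so ${\mc D}_{w,\wt w}$ is finitely generated. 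Equivalently, projecting onto the second factor identifies ${\mc D}_{w,\wt w}$ with $\{\wt\mu\in\wt{\mc P}^+ : (w\circ\iota^*\circ\wt w^{-1})(\wt\mu)\in{\mc P}^+\}$, again the lattice points of a rational polyhedral cone, to which Gordan's lemma applies verbatim.
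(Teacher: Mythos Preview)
Your proof is correct and follows essentially the same approach as the paper: both arguments use condition (i) to deduce $\iota^*\langle\Phi_{\wt w}\rangle=\langle\Phi_w\rangle$, thereby collapsing the defining affine relation to the linear one $\mu = w(\iota^*(\wt w^{-1}(\wt\mu)))$, so that ${\mc D}_{w,\wt w}$ is the intersection of ${\mc D}$ with a rational linear subspace. Your invocation of Gordan's lemma for finite generation is in fact more explicit than the paper's, which simply asserts that the result follows from ${\mc D}$ and ${\mc P}\times\wt{\mc P}$ being finitely generated.
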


\begin{proof}
Condition (i) implies that $\iota^*\langle\Phi_{\wt{w}}\rangle=\langle\Phi_w\rangle$, which in turn implies that for $\wt{\mu}\in\wt{\mc P}^+$,
\begin{gather}\label{For (i)->Demaz=Lin}
w\cdot(\iota^*(\wt{w}^{-1}\cdot\wt{\mu})) = w(\iota^*(\wt{w}^{-1}(\wt{\mu}))) \;.
\end{gather}
The right hand side depends linearly on $\wt{\mu}$. Thus ${\mc D}_{w,\wt{w}}$ is the intersection of ${\mc D}$ with a linear subspace of ${\rm E}\oplus\wt{\rm E}$. This implies that ${\mc D}_{w,\wt{w}}$ is a submonoid of ${\mc D}$. Since both ${\mc D}$ and ${\mc P}\times\wt{\mc P}$ are finitely generated, so is ${\mc D}_{w,\wt{w}}$.
\end{proof}

Theorem \ref{Theo Necessity} implies that
$$
{\mc C} = \bigcup_{w,\wt{w}} {\mc C}\cap{\mc D}_{w,\wt{w}} \;,
$$
where the union is over the pairs $w,\wt{w}$ satisfying (i). Denote ${\mc C}_{w,\wt{w}} = {\mc C}\cap{\mc D}_{w,\wt{w}}$. The main result of this section is the following theorem.

\begin{theorem}\label{Theo C_w,wt(w) monoid}
Suppose that $\wt{w}$ and $w\in{\mc W}$ are as in Lemma \ref{Lem D(w,wt(w))}. Then ${\mc C}_{w,\wt{w}}$ is a submonoid of ${\mc D}_{w,\wt{w}}$. Furthermore, there exists a positive integer $k$, depending only on the embedding $\varphi$ and the elements $w,\wt{w}$, such that for any $(\mu,\wt{\mu})\in{\mc D}_{w,\wt{w}}$ we have $(k\mu,k\wt{\mu})\in{\mc C}_{w,\wt{w}}$.
\end{theorem}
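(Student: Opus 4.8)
The plan is to exploit two structural features: first, that condition (i) makes membership in $\mathcal{D}_{w,\wt{w}}$ a \emph{linear} condition on $(\mu,\wt{\mu})$ (by \eqref{For (i)->Demaz=Lin} the "dot" operations become honest linear maps), so $\mathcal{D}_{w,\wt{w}}$ is the intersection of the finitely generated monoid $\mathcal{D}$ with a linear subspace; second, that by Theorem \ref{Theo Necessity}, with (i) already assumed, the \emph{only} obstruction to $(\mu,\wt{\mu})\in\mathcal{C}_{w,\wt{w}}$ is condition (ii), i.e.\ the vector $\wt{v}^{\wt{w}^{-1}(\wt{\mu})}\in\wt{V}(\wt{\mu})$ generates a $G$-submodule containing a copy of $V(\mu)=V(w(\iota^*(\wt{w}^{-1}(\wt{\mu}))))$. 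So the whole statement reduces to: (a) condition (ii) is "sub-monoidal" in $(\mu,\wt{\mu})$; and (b) condition (ii) holds after multiplying by a suitable fixed $k$.

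For the submonoid assertion, I would argue multiplicatively. Given $(\mu_1,\wt{\mu}_1),(\mu_2,\wt{\mu}_2)\in\mathcal{C}_{w,\wt{w}}$, the sum lies in $\mathcal{D}_{w,\wt{w}}$ since that set is already a monoid (Lemma \ref{Lem D(w,wt(w))}). To see it lies in $\mathcal{C}_{w,\wt{w}}$, use the Cartan multiplication $\wt{V}(\wt{\mu}_1)\otimes\wt{V}(\wt{\mu}_2)\twoheadrightarrow\wt{V}(\wt{\mu}_1+\wt{\mu}_2)$, which sends $\wt{v}^{\wt{w}^{-1}(\wt{\mu}_1)}\otimes\wt{v}^{\wt{w}^{-1}(\wt{\mu}_2)}$ to a nonzero extreme weight vector $\wt{v}^{\wt{w}^{-1}(\wt{\mu}_1+\wt{\mu}_2)}$ (extreme weight vectors multiply to extreme weight vectors). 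Since this surjection is $G$-equivariant, it carries $\mathfrak{U}(\mathfrak{g})\wt{v}^{\wt{w}^{-1}(\wt{\mu}_1)}\otimes\mathfrak{U}(\mathfrak{g})\wt{v}^{\wt{w}^{-1}(\wt{\mu}_2)}$ onto $\mathfrak{U}(\mathfrak{g})\wt{v}^{\wt{w}^{-1}(\wt{\mu}_1+\wt{\mu}_2)}$; on the source, condition (ii) for each factor gives a surjection onto something containing $V(\mu_1)\otimes V(\mu_2)$, whose image in $V(\mu_1+\mu_2)$-isotypic components must be nonzero because $V(\mu_1+\mu_2)$ appears in $V(\mu_1)\otimes V(\mu_2)$ (Cartan component) and the composite map $V(\mu_1)\otimes V(\mu_2)\to V(\mu_1+\mu_2)$ is the Cartan surjection, hence nonzero. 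Using Lemma \ref{Lem Im(pi_w)=Gv^w(mu)}, which identifies $\mathfrak{U}(\mathfrak{g})\wt{v}^{\wt{w}^{-1}(\wt{\mu})}$ with $\mathrm{Im}(\pi_{\wt{w}}^{\wt{\mu}})^*$, the same statement can be phrased as: the cup product of sections not vanishing on $X_{\wt{w}}$ does not vanish on $X_{\wt{w}}$, which is clear since $X_{\wt{w}}$ is irreducible (a variety) and sections are regular functions on a line bundle, so a product of nonzero restrictions is nonzero. This last phrasing is the cleanest and I would present it via Theorem \ref{Theo Necessity}$'$ and Lemma \ref{Lem Im(pi_w)=Gv^w(mu)}.

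For the uniform-multiple assertion, the idea is that condition (ii)$'$ — the image of $\pi_{\wt{w}}^{\wt{\mu}}$ in $H^0(\wt{X},\wt{\mathcal{O}}(\wt{\mu}))$ containing a copy of $V(\mu)^*$ — can be rephrased projectively: it says precisely that the point $[\wt{v}^{\wt{w}^{-1}(\wt{\mu})}]\in\mathbb{P}(\wt{V}(\wt{\mu}))$, equivalently the image $\wt{\psi}_{\wt{\mu}}(X_{\wt{w}})$ of the Schubert-type variety $X_{\wt{w}}$ under $\wt{\psi}_{\wt{\mu}}$, is \emph{not} contained in the kernel of the $G$-equivariant projection dual to the inclusion $V(\mu)\hookrightarrow\wt{V}(\wt{\mu})$ — and that $V(\mu)$ \emph{does} occur in $\mathfrak{U}(\mathfrak{g})\wt{v}^{\wt{w}^{-1}(\wt{\mu})}$. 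Here I would fix a lattice point $(\mu_0,\wt{\mu}_0)$ in the relative interior of the cone over $\mathcal{D}_{w,\wt{w}}$ and show first that some multiple $(k_0\mu_0,k_0\wt{\mu}_0)$ lies in $\mathcal{C}_{w,\wt{w}}$, then use the submonoid property together with finite generation of $\mathcal{D}_{w,\wt{w}}$ to propagate: since $\mathcal{D}_{w,\wt{w}}$ is finitely generated by, say, $v_1,\dots,v_r$, if each $k v_i\in\mathcal{C}_{w,\wt{w}}$ for a common $k$, then $k(\sum n_i v_i)=\sum n_i(k v_i)\in\mathcal{C}_{w,\wt{w}}$ by the monoid property, giving the claim for every point. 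Thus it suffices to produce, for each generator $v_i=(\mu_i,\wt{\mu}_i)$, an integer $k_i$ with $(k_i\mu_i,k_i\wt{\mu}_i)\in\mathcal{C}_{w,\wt{w}}$, and then take $k=\mathrm{lcm}(k_i)$ (again invoking the monoid property to pass from $k_i$ to any multiple of $k_i$, which requires knowing $\mathcal{C}_{w,\wt{w}}$ is closed under adding the same element — true, since it's a monoid).

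\textbf{The main obstacle} is exactly the last point: producing, for a single dominant pair $(\mu,\wt{\mu})\in\mathcal{D}_{w,\wt{w}}$, \emph{some} multiple satisfying condition (ii). For this I would argue that the "bad locus" is a proper closed condition that scales away. Concretely: the $G$-submodule $\mathfrak{U}(\mathfrak{g})\wt{v}^{\wt{w}^{-1}(\wt{\mu})}\subset\wt{V}(\wt{\mu})$ is the span of the $G$-orbit of the extreme weight vector; passing to $N\wt{\mu}$ for large $N$, the relevant extreme weight vector is (a nonzero multiple of) the $N$-th Cartan power of $\wt{v}^{\wt{w}^{-1}(\wt{\mu})}$ inside $\wt{V}(N\wt{\mu})\subset\mathrm{Sym}^N\wt{V}(\wt{\mu})$, so $\mathfrak{U}(\mathfrak{g})\cdot(\wt{v}^{\wt{w}^{-1}(\wt{\mu})})^{\otimes N}$ projects to the affine cone over $\wt{\psi}_{N\wt{\mu}}(X_{\wt{w}})=$ image of the $G$-variety $\overline{G\cdot[\wt{v}^{\wt{w}^{-1}(\wt{\mu})}]}$; one then needs that for $N\gg0$ this cone, pulled apart into $G$-isotypic pieces, picks up the $V(w(\iota^*(\wt{w}^{-1}(N\wt{\mu}))))=V(N\mu)$-isotypic component. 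This is precisely the kind of "saturation" statement known for multiplicities in coordinate rings of $G$-varieties: the set of $\nu$ for which $V(\nu)$ occurs in the coordinate ring of $\overline{G\cdot[\wt{v}^{\wt{w}^{-1}(\wt{\mu})}]}$ is the set of lattice points in a rational polyhedral cone (by finite generation of the relevant multiplicity semigroup / Brion–Knop type results), and $\mu$ lies in the $\mathbb{Q}$-span of that cone because condition (i) already guarantees $V(\mu)$ appears for \emph{some} degree — indeed remark \ref{Zab H^0->H^0} and the structure of $\mathcal{D}_{w,\wt{w}}$ pin down the ray. Hence a multiple works. I expect this saturation argument — identifying the multiplicity semigroup as finitely generated and checking $\mu$ is in its rational cone — to be the technical heart; everything else is formal monoid bookkeeping via the Cartan product and Lemma \ref{Lem Im(pi_w)=Gv^w(mu)}.
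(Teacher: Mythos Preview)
Your proposal is correct and follows essentially the same route as the paper: the submonoid property is proved via Theorem \ref{Theo Necessity}$'$ and the observation that products of sections not vanishing on the irreducible variety $X_{\wt{w}}$ again do not vanish there (this is the paper's Lemma \ref{Lem Semigroup}), and the uniform multiple is obtained by reducing to a finite generating set of ${\mc D}_{w,\wt{w}}$ and finding a suitable $k_i$ for each generator (the paper's Lemma \ref{Lem Asympto}). The only real difference is in sourcing the per-generator saturation: where you invoke a general Brion--Knop type finiteness for the multiplicity semigroup of the coordinate ring of $\overline{G\cdot[\wt{v}^{\wt{w}^{-1}(\wt{\mu})}]}$, the paper cites a specific lemma of Montagard--Pasquier--Ressayre \cite{Ressayre-et-al} tailored to exactly this situation, which makes that step cleaner and avoids having to verify that $\mu$ lies in the rational cone of the multiplicity semigroup.
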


\begin{proof} First, from Corollary \ref{Coro Triv modules} we obtain $(0,0)\in{\mc C}_{w,\wt{w}}$. The theorem follows directly from Lemma \ref{Lem Asympto} and Lemma \ref{Lem Semigroup} which we state and prove below.
\end{proof}

\begin{zab}\label{Zab Semigroups}
A question naturally arising after Theorem \ref{Theo C_w,wt(w) monoid} is: what is the minimal $k$ and how can it be computed? Various situations may occur. For instance, as we shall see, for regular and diagonal embeddings we always have ${\mc C}_{w,\wt{w}}={\mc D}_{w,\wt{w}}$, so that $k=1$. On the other hand, the results of Section \ref{Sec Invariants} allow us to construct examples for which the complement ${\mc D}_{w,\wt{w}}\setminus{\mc C}_{w,\wt{w}}$ is non-empty, finite in some cases and infinite in others; see Remark \ref{Zab Invar&Monoid}. Corollary \ref{Coro Multi-free} implies that, when the orbit-closure $X_{\wt{w}}$ is multiplicity free in $\wt{X}$, we have $k=1$. It would be interesting to know whether there is a relation between the coefficients $c_{\wt{\sigma}}(X_{\wt{w}})$ and $k$.
\end{zab}

\begin{lemma}\label{Lem Asympto}
Suppose that $\wt{w}\in\wt{\mc W}$ and $w\in{\mc W}$ are as in Lemma \ref{Lem D(w,wt(w))}. Then there exists a positive integer $k$ such that for any $(\mu,\wt{\mu})\in{\mc D}_{w,\wt{w}}$ the pullback
$$
\pi^{\wt{w}^{-1}\cdot k\wt{\mu}} : \wt{V}(k\wt{\mu})^* \lw V(k\mu)^* \;.
$$
is non-zero. In particular, $(k\mu,k\wt{\mu})\in {\mc C}_{w,\wt{w}}$.
\end{lemma}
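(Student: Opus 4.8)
The plan is to prove Lemma \ref{Lem Asympto} by combining the criterion of theorem \ref{Theo Necessity} with an asymptotic saturation argument, exploiting the fact that condition (i) is insensitive to scaling. Fix a pair $(\mu,\wt\mu)\in{\mc D}_{w,\wt w}$, so that $\mu = w(\iota^*(\wt w^{-1}(\wt\mu)))$ by (\ref{For (i)->Demaz=Lin}); then for every positive integer $m$ the pair $(m\mu,m\wt\mu)$ again lies in ${\mc D}_{w,\wt w}$, with $m\mu = w(\iota^*(\wt w^{-1}(m\wt\mu)))$. Setting $\wt\lambda_m = \wt w^{-1}\cdot m\wt\mu$ and $\lambda_m = \iota^*(\wt\lambda_m)$, one checks using (\ref{For w^{-1}(w.lambda)}) and the equality $\iota^*\langle\Phi_{\wt w}\rangle = \langle\Phi_w\rangle$ (a consequence of condition (i)) that $\wt w_{\wt\lambda_m} = \wt w$ and $w_{\lambda_m}=w$ for all $m$, and that $\wt\lambda_m + \langle\Phi_{\wt w}\rangle = m\wt\mu$. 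So condition (i) of theorem \ref{Theo Necessity} holds for the pair $(w,\wt w)$ simultaneously for every $m$, and the nonvanishing of $\pi^{\wt w^{-1}\cdot m\wt\mu}$ is equivalent to condition (ii): the $G$-submodule ${\mk U}(\mk g)\,\wt v^{\,m\wt\mu}$ of $\wt V(m\wt\mu)$ generated by the extreme weight vector of weight $m\wt\mu = m\,\wt w^{-1}(\wt\mu)$ must contain a copy of $V(m\mu)$.

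The heart of the matter is therefore the following semigroup-type statement: if we let $S$ denote the set of positive integers $m$ for which ${\mk U}(\mk g)\,\wt v^{\,m\wt\mu} \supset V(m\mu)$, then $S$ is closed under addition, hence contains all sufficiently large multiples of some fixed $k_{\mu,\wt\mu}$; and then one must upgrade this ``for each $(\mu,\wt\mu)$ separately'' statement to a uniform $k$ working for all of ${\mc D}_{w,\wt w}$. The additivity of $S$ should follow from a Cartan-multiplication argument: if $v\in{\mk U}(\mk g)\,\wt v^{\,m_1\wt\mu}$ is a highest weight vector of weight $m_1\mu$ inside $\wt V(m_1\wt\mu)$, and $v'\in{\mk U}(\mk g)\,\wt v^{\,m_2\wt\mu}$ one of weight $m_2\mu$ inside $\wt V(m_2\wt\mu)$, then the Cartan component of $\wt V(m_1\wt\mu)\otimes\wt V(m_2\wt\mu)$ is $\wt V((m_1{+}m_2)\wt\mu)$, realized as ${\mk U}(\wt{\mk g})(\wt v^{\,m_1\wt\mu}\otimes\wt v^{\,m_2\wt\mu})$, and the image of $v\otimes v'$ under the $\wt{\mk g}$-projection onto this component is a vector of weight $(m_1{+}m_2)\mu$ lying in ${\mk U}(\mk g)$ applied to the image of $\wt v^{\,m_1\wt\mu}\otimes \wt v^{\,m_2\wt\mu}$, i.e. in ${\mk U}(\mk g)\,\wt v^{\,(m_1+m_2)\wt\mu}$; one has to check this image is nonzero, which holds because $\wt v^{\,m_1\wt\mu}\otimes\wt v^{\,m_2\wt\mu}$ is an extreme weight vector and thus survives the projection, and because $v,v'$ can be taken to be nonzero on the relevant weight line — this is exactly the kind of bookkeeping the authors defer to lemma \ref{Lem Semigroup}. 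Passing from a $k$ depending on $(\mu,\wt\mu)$ to a uniform $k$ is handled by the finite generation of ${\mc D}_{w,\wt w}$ from lemma \ref{Lem D(w,wt(w))}: take $k$ to be a common multiple of the $k_{\mu_i,\wt\mu_i}$ attached to a finite generating set, and use additivity again.

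Concretely, the structure I would write is: (1) record that condition (i) holds for $(w,\wt w)$ against every multiple $(m\mu,m\wt\mu)$, and identify the extreme weight vector $\wt v^{\,\wt\lambda_m + \langle\Phi_{\wt w}\rangle} = \wt v^{\,m\,\wt w^{-1}(\wt\mu)}$; (2) reduce, via theorem \ref{Theo Necessity}(ii), to showing $V(km\mu)\subset{\mk U}(\mk g)\,\wt v^{\,km\,\wt w^{-1}(\wt\mu)}$ for suitable $k$ and all $m\ge 1$ — equivalently, by lemma \ref{Lem Im(pi_w)=Gv^w(mu)}, that the Schubert-type pullback $\pi_{\wt w}^{k\wt\mu}$ has $V(k\mu)^*$ in its image; (3) invoke lemma \ref{Lem Semigroup} (the additivity of the set of good multiples) and lemma \ref{Lem D(w,wt(w))} (finite generation) to extract a single $k$; (4) conclude $(k\mu,k\wt\mu)\in{\mc C}_{w,\wt w}$. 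The main obstacle is step (3)'s additivity claim: one genuinely needs that the Cartan-product construction keeps the relevant vectors both inside the ${\mk U}(\mk g)$-orbit of the extreme weight vector and away from zero, and that it respects the identification of $\mu$ as $w(\iota^*(\wt w^{-1}(\wt\mu)))$; everything else is linear-algebra bookkeeping with the affine Weyl action and the finite-generation fact already in hand. I would flag that the non-vanishing after projection is the delicate point and that it is isolated in lemma \ref{Lem Semigroup}.
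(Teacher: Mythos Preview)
Your proposal has a genuine gap: you never establish that the set $S$ of positive integers $m$ with $V(m\mu)\subset{\mk U}(\mk g)\,\wt v^{\,m\,\wt w^{-1}(\wt\mu)}$ is nonempty. Closure of $S$ under addition (the content of lemma \ref{Lem Semigroup}) tells you nothing unless you already know $S$ contains at least one positive integer; your sentence ``$S$ is closed under addition, hence contains all sufficiently large multiples of some fixed $k_{\mu,\wt\mu}$'' silently assumes $S\ne\varnothing$, which is precisely what lemma \ref{Lem Asympto} asserts. In effect you are using the semigroup lemma to bootstrap from nothing, and the argument is circular at the point where it must produce the first good multiple.

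The paper resolves this by invoking an external algebro-geometric input: lemma 2 of \cite{Ressayre-et-al}, which guarantees that for some $k$ the space $H^0(\wt X,\wt{\mc O}(k\wt\mu))$ contains a copy of $V(k\mu)^*$ not vanishing identically on the orbit closure $X_{\wt w}$. Via lemma \ref{Lem Im(pi_w)=Gv^w(mu)} and theorem \ref{Theo Necessity}$'$, this is exactly condition (ii)$'$ for the weight $\wt w^{-1}\cdot k\wt\mu$, and combined with condition (i) (which, as you correctly note, is insensitive to scaling) gives $\pi^{\wt w^{-1}\cdot k\wt\mu}\ne 0$. The passage to a uniform $k$ then proceeds, as you say, by finite generation of ${\mc D}_{w,\wt w}$. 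So your steps (1), (2), and the uniformity part of (3) are fine; what is missing is the base case, and that requires the MPR result or an equivalent argument showing that sections of high enough tensor powers separate the relevant $G$-orbit. (A minor aside: the equality ``$m\wt\mu = m\,\wt w^{-1}(\wt\mu)$'' in your second paragraph is a slip; the extreme weight is $m\,\wt w^{-1}(\wt\mu)$, as you write correctly later.)
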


\begin{proof} Let $(\mu,\wt{\mu})\in{\mc D}_{w,\wt{w}}$. For $k\in\ZZ_{>0}$ put $\wt{\lambda}_k=\wt{w}^{-1}\cdot k\wt{\mu}$, $\lambda_k=\iota^*(\wt{\lambda}_k)$ and $\mu_k=w\cdot\lambda_k$, so that $\mu=\mu_1$. Using (\ref{For (i)->Demaz=Lin}) we get
$$
\mu_k = w\cdot(\iota^*(\wt{w}^{-1}\cdot k\wt{\mu})) = w(\iota^*(\wt{w}^{-1}(k\wt{\mu}))) = k\mu \;.
$$
Recall that $X_{\wt{w}}^\circ = G\wt{w}\wt{B}/\wt{B}\subset \wt{X}$ denotes the $G$-orbit in $\wt{X}$ through the point corresponding to the element $\wt{w}^{-1}$.

Condition (i) of Theorem \ref{Theo Necessity}$'$ holds by hypothesis. Condition (ii)$'$ is satisfied for $\pi^{\wt{w}^{-1}\cdot k\wt{\mu}}$ if and only if there exists a $G$-submodule $V(k\mu)^*\subset H(\wt{X},\wt{\mc O}(k\wt{\mu}))$ whose elements do not vanish identically on $X_{\wt{w}}^\circ$. The existence of $k$, for which such a $G$-submodule exists, is asserted by Lemma 2 of \cite{Ressayre-et-al}. Now Theorem \ref{Theo Necessity}$'$ implies that $\pi^{\wt{w}^{-1}\cdot k\wt{\mu}}\ne 0$.

The fact that $k$ can be chosen uniformly for all $(\mu,\wt{\mu})\in{\mc D}_{w,\wt{w}}$ follows from the fact that ${\mc D}_{w,\wt{w}}$ is finitely generated.
\end{proof}

\begin{lemma}\label{Lem Semigroup}
Suppose that $\wt{w}$ and $w\in{\mc W}$ are as in Lemma \ref{Lem D(w,wt(w))}. Suppose further that $(\mu_1,\wt{\mu}_1)$ and $(\mu_2,\wt{\mu}_2)$ belong to ${\mc C}_{w,\wt{w}}$, so that both pullbacks
$$
\pi^{\wt{w}^{-1}\cdot\wt{\mu}_1} : \wt{V}(\wt{\mu}_1)^* \lw V(\mu_1)^* \quad , \quad \pi^{\wt{w}^{-1}\cdot\wt{\mu}_2} : \wt{V}(\wt{\mu}_2)^* \lw V(\mu_2)^*
$$
are non-zero. Denote $\wt{\mu}=\wt{\mu}_1+\wt{\mu}_2$ and $\mu=\mu_1+\mu_2$. Then $(\mu,\wt{\mu})\in{\mc C}_{w,\wt{w}}$ and the pullback
$$
\pi^{\wt{w}^{-1}\cdot\wt{\mu}} : \wt{V}(\wt{\mu})^* \lw V(\mu)^*
$$
is non-zero. In other words, ${\mc C}_{w,\wt{w}}$ is a subsemigroup of ${\mc D}_{w,\wt{w}}$.
\end{lemma}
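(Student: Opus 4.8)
The plan is to exploit the fact that condition (i) of Theorem~\ref{Theo Necessity} depends only on $w,\wt{w}$ (and is satisfied by hypothesis), so that the only thing needing verification for the pair $(\mu,\wt{\mu})$ is condition (ii), which we intend to establish by multiplying harmonic cocycles. First I would note, via formula (\ref{For (i)->Demaz=Lin}) of Lemma~\ref{Lem D(w,wt(w))}, that since $(\mu_1,\wt{\mu}_1)$ and $(\mu_2,\wt{\mu}_2)$ lie in ${\mc D}_{w,\wt{w}}$ and condition (i) holds, the sum $(\mu,\wt{\mu})=(\mu_1+\mu_2,\wt{\mu}_1+\wt{\mu}_2)$ also lies in ${\mc D}_{w,\wt{w}}$: indeed $\mu=w(\iota^*(\wt{w}^{-1}(\wt{\mu})))$ by linearity, and hence $\lambda:=\iota^*(\wt{w}^{-1}\cdot\wt{\mu})$ is again regular with $w_\lambda=w$. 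So $\wt{\lambda}:=\wt{w}^{-1}\cdot\wt{\mu}$ and $\lambda$ are regular and the relevant pullback is $\pi^{\wt{\lambda}}:\wt{V}(\wt{\mu})^*\to V(\mu)^*$, as claimed.

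Next, the core of the argument: I would take the nonzero harmonic cocycles furnished by the hypotheses. By the proof of Theorem~\ref{Theo Necessity}, the nonvanishing of $\pi^{\wt{w}^{-1}\cdot\wt{\mu}_j}$ means there exist $\wt{f}_j\in\wt{V}(\wt{\mu}_j)^*$ with ${\rm r}(\wt{f}_j\otimes\wt{v}^{\wt{w}^{-1}(\wt{\mu}_j)})\ne 0$ in ${\mc F}(K)$, so that $\wt{e}^*_{-\Phi_{\wt{w}}}\otimes\wt{f}_j\otimes\wt{v}^{\wt{w}^{-1}(\wt{\mu}_j)}$ pulls back to a nonzero harmonic cocycle. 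Here I must use the multiplicative structure: the algebra $\Lambda\wt{\mk n}^*\otimes{\mc F}(\wt{K})$ is the total Koszul-type complex, and the embedding $\wt{V}(\wt{\mu}_1)^*\otimes\wt{V}(\wt{\mu}_2)^*\hookrightarrow\wt{V}(\wt{\mu})^*$ (Cartan component of the tensor product of duals) sends $\wt{v}^{\wt{w}^{-1}(\wt{\mu}_1)}\otimes\wt{v}^{\wt{w}^{-1}(\wt{\mu}_2)}$ to the extreme weight vector $\wt{v}^{\wt{w}^{-1}(\wt{\mu})}$ (extreme weight vectors multiply to extreme weight vectors in the Cartan component, and that component is the one generated by $\wt{v}^{\wt{\mu}}$, containing $\wt{v}^{\wt{w}^{-1}(\wt{\mu})}$). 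Correspondingly $\wt{f}_1\otimes\wt{f}_2$, pushed into $\wt{V}(\wt{\mu})^*$, pairs nontrivially with $\wt{v}^{\wt{w}^{-1}(\wt{\mu})}$ because ${\rm r}(\wt{f}_1\otimes\wt{v}^{\wt{w}^{-1}(\wt{\mu}_1)})\cdot{\rm r}(\wt{f}_2\otimes\wt{v}^{\wt{w}^{-1}(\wt{\mu}_2)})\ne0$ as a product of representative functions on $K$ (the product of nonzero matrix coefficients, one from $V(\mu_1)$, one from $V(\mu_2)$, is nonzero since it contains the top matrix coefficient of the Cartan component $V(\mu)\subset V(\mu_1)\otimes V(\mu_2)$). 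Thus condition (ii) of Theorem~\ref{Theo Necessity} holds for $(\mu,\wt{\mu})$: the $G$-submodule ${\mk U}(\mk g)\wt{v}^{\wt{w}^{-1}(\wt{\mu})}$ contains a copy of $V(\mu)$. By Theorem~\ref{Theo Necessity}, $\pi^{\wt{w}^{-1}\cdot\wt{\mu}}\ne0$, so $(\mu,\wt{\mu})\in{\mc C}_{w,\wt{w}}$.

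Alternatively — and perhaps more cleanly — I would argue geometrically using Theorem~\ref{Theo Necessity}$'$ and Lemma~\ref{Lem Im(pi_w)=Gv^w(mu)}. Nonvanishing of $\pi^{\wt{w}^{-1}\cdot\wt{\mu}_j}$ gives a $G$-section $s_j\in H^0(\wt{X},\wt{\mc O}(\wt{\mu}_j))$, lying in a copy of $V(\mu_j)^*$, that does not vanish identically on $X_{\wt{w}}$; evaluating at the point $\wt{w}^{-1}\wt{B}$ amounts to a nonzero pairing of $\wt{f}_j$ with $\wt{v}^{\wt{w}^{-1}(\wt{\mu}_j)}$. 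The product $s_1 s_2\in H^0(\wt{X},\wt{\mc O}(\wt{\mu}))$ lies in the $G$-submodule generated by its highest component, which is the Cartan component $V(\mu)^*\subset V(\mu_1)^*\otimes V(\mu_2)^*$, and $s_1 s_2$ does not vanish at $\wt{w}^{-1}\wt{B}$ because $s_1(\wt{w}^{-1}\wt{B})\ne0\ne s_2(\wt{w}^{-1}\wt{B})$ and $X_{\wt{w}}$ is irreducible (so a product of two functions not vanishing identically on $X_{\wt{w}}$ does not vanish identically on $X_{\wt{w}}$). Hence condition (ii)$'$ of Theorem~\ref{Theo Necessity}$'$ holds for $(\mu,\wt{\mu})$, giving $\pi^{\wt{w}^{-1}\cdot\wt{\mu}}\ne0$.

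The main obstacle I anticipate is justifying cleanly that the relevant product of sections (or of matrix coefficients) lands in, and is nonzero in, the Cartan component $V(\mu)\subset V(\mu_1)\otimes V(\mu_2)$ corresponding to the extreme weight $\wt{w}^{-1}(\wt{\mu})=\wt{w}^{-1}(\wt{\mu}_1)+\wt{w}^{-1}(\wt{\mu}_2)$ — in effect, checking that the two nonvanishing conditions are ``compatible'' under multiplication and do not cancel. The representation-theoretic fact I will lean on is that in $V(\mu_1)\otimes V(\mu_2)$ the line through $v^{\mu_1}\otimes v^{\mu_2}$ generates precisely one irreducible summand $V(\mu_1+\mu_2)$ with multiplicity one, and that more generally extreme weight vectors for a common Weyl chamber multiply into this Cartan component without vanishing; this is standard but I would cite it explicitly. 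Everything else — the monoid/semigroup bookkeeping, regularity of $\lambda$, membership in ${\mc D}_{w,\wt{w}}$ — is routine given Lemma~\ref{Lem D(w,wt(w))}.
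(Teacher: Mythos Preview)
Your proposal is correct and your geometric alternative is essentially the paper's own argument. The paper works via Theorem~\ref{Theo Necessity}$'$: it takes the $G$-submodules $V_j^*\cong V(\mu_j)^*$ in $H^0(\wt X,\wt{\mc O}(\wt\mu_j))$ not vanishing on $X_{\wt w}$, forms $V_1^*\otimes V_2^*$ as sections of $\wt{\mc O}(\wt\mu_1)\boxtimes\wt{\mc O}(\wt\mu_2)$ on $\wt X\times\wt X$, restricts to the diagonal, and then observes that the Cartan component $V_3^*\cong V(\mu)^*$ survives restriction to $\delta(X_{\wt w})$; your ``multiply sections $s_1s_2$'' is the same map written without the external product.

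One point to tighten: the sentence ``$s_1s_2$ lies in the $G$-submodule generated by its highest component'' is not what you want. An arbitrary product $s_1s_2$ need not lie in the Cartan component. The clean way (implicit in the paper) is: since $V_j^*$ is $G$-irreducible and does not vanish identically on $X_{\wt w}$, the restriction $V_j^*\to H^0(X_{\wt w},{\mc O}_{\wt w}(\wt\mu_j))$ is injective, so in particular the $B$-highest weight vector $a_j\in V_j^*$ restricts to a nonzero section. Then $a_1a_2$ is a nonzero $B$-highest weight vector (by irreducibility of $X_{\wt w}$) of the correct weight, hence generates a copy of $V(\mu)^*$ inside $H^0(\wt X,\wt{\mc O}(\wt\mu))$ not vanishing on $X_{\wt w}$. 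This is exactly the ``extreme weight vectors multiply into the Cartan component'' fact you flag at the end; just make sure you apply it to the highest-weight sections rather than to arbitrary $s_1,s_2$.
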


\begin{proof}
Denote $\wt{\lambda}_j=\wt{w}^{-1}\cdot\wt{\mu}_j$ and $\lambda_j=\iota^*(\wt{\lambda}_j)$ for $j=1,2$. Also, put $\wt{\lambda}=\wt{w}^{-1}\cdot\wt{\mu}$ and $\lambda=\iota^*(\wt{\lambda})$. Lemma \ref{Lem D(w,wt(w))} implies that $(\mu,\wt{\mu}) \in {\mc D}_{w,\wt{w}}$ and $\lambda=w^{-1}\cdot\mu$. Thus the pullback $\pi^{\wt{\lambda}}$ is indeed a map from $\wt{V}(\wt{\mu})^*$ to $V(\mu)^*$.

To check that $\pi^{\wt{\lambda}}\ne 0$ we shall apply Theorem \ref{Theo Necessity}$'$. By assumption, condition (i) is satisfied by $\pi^{\wt{\lambda}}$. It remains to verify condition (ii)$'$. It is convenient to relabel condition (ii)$'$ as applied to the pullbacks $\pi^{\wt{\lambda}_1}$, $\pi^{\wt{\lambda}_2}$ and $\pi^{\wt{\lambda}}$, as follows:\\

\noindent (ii)$'_1$ \indent ${\rm Hom}_G(V(\mu_1)^*,Im(\pi_{\wt{w}}^{\wt{\mu}_1})) \ne 0$.\\

\noindent (ii)$'_2$ \indent ${\rm Hom}_G(V(\mu_2)^*,Im(\pi_{\wt{w}}^{\wt{\mu}_2})) \ne 0$.\\

\noindent (ii)$'$ \indent ${\rm Hom}_G(V(\mu)^*,Im(\pi_{\wt{w}}^{\wt{\mu}})) \ne 0$.\\

Our hypothesis for nonvanishing of $\pi^{\wt{\lambda}_1}$ and $\pi^{\wt{\lambda}_2}$ implies, via Theorem \ref{Theo Necessity}$'$, that (ii)$'_1$ and (ii)$'_2$ hold. We need to verify (ii)$'$. Thus, we have $G$-submodules $V_1^*\subset H^0(\wt{X},\wt{O}(\wt{\mu}_1))$ and $V_2^*\subset H^0(\wt{X},\wt{O}(\wt{\mu}_2))$, isomorphic respectively to $V(\mu_1)^*$ and $V(\mu_2)^*$, whose elements do not vanish identically on $X_{\wt{w}}$. Let $\delta:\wt{X}\hookrightarrow\wt{X}\times\wt{X}$ denote the diagonal embedding. It follows that $H^0(\wt{X}\times\wt{X},\wt{\mc O}(\wt{\mu}_1)\boxtimes\wt{\mc O}(\wt{\mu}_2))$ contains the $G\times G$-submodule $V_1^*\otimes V_2^*$ whose elements do not vanish identically on $\delta(X_{\wt{w}})\subset\wt{X}\times\wt{X}$. Let $V_3^*\subset V_1^*\otimes V_2^*$ be the highest $G$-irreducible component; we have $V_3^*\cong V(\mu)^*$. Then $V_3^*$ does not vanish identically on $\delta(X_{\wt{w}})$. Notice that the restriction of $H^0(\wt{X}\times\wt{X},\wt{\mc O}(\wt{\mu}_1)\boxtimes\wt{\mc O}(\wt{\mu}_2))$ to the diagonal $\delta(\wt{X})$ is a surjection onto $H^0(\wt{X},\wt{\mc O}(\wt{\mu}))$. Thus any irreducible $G$-submodule of $H^0(\wt{X}\times\wt{X},\wt{\mc O}(\wt{\mu}_1)\boxtimes\wt{\mc O}(\wt{\mu}_2))$ which has a non-zero pullback to $\delta(X_{\wt{w}})$ is isomorphic to an irreducible $G$-submodule of $H^0(\wt{X},\wt{\mc O}(\wt{\mu}))$ which has a nonzero pullback to $X_{\wt{w}}$. In particular, there exists $V^*\subset H^0(\wt{X},\wt{\mc O}(\wt{\mu}))$ such that $V^*\cong V_3^*\cong V(\mu)^*$ and such that $V^*$ does not vanish identically on $X_{\wt{w}}$. Hence (ii)$'$ holds and $\pi^{\wt{\lambda}}$ is non-zero. This completes the proof.
\end{proof}

\section{Special cases and examples}

\subsection{Regular embeddings}\label{Sec Regular}

In this section we consider the case when $\iota:{\mk g}\subset\wt{\mk g}$ is a regular subalgebra, i.e. we suppose that a Cartan subalgebra $\wt{\mk h}\subset\wt{\mk g}$ is given such that ${\mk g}$ is an $\wt{\mk h}$-submodule of $\wt{\mk g}$. Then ${\mk h}={\mk g}\cap\wt{\mk h}$ is a Cartan subalgebra of ${\mk g}$ and the corresponding root system $\Delta=\Delta({\mk g},{\mk h})$ is included as a subset in the root system $\wt{\Delta}=\Delta(\wt{\mk g},\wt{\mk h})$, namely $\Delta=\{\wt{\alpha}\in\wt{\Delta}:\mk{g}\supset\wt{\mk g}^{\wt{\alpha}}\}\subset\wt{\Delta}$. The Weyl group ${\mc W}$ is included in $\wt{\mc W}$ as the subgroup generated by the reflections along the roots in $\Delta$. Notice that in this setting, the homomorphism $\phi:G\rw\wt{G}$ of simply connected Lie groups is a monomorphism and we can think of $G$ as a subgroup of $\wt{G}$.

Assume that ${\mk b}$ and $\wt{\mk b}$ are Borel subalgebras of ${\mk g}$ and $\wt{\mk g}$ respectively, such that ${\mk b}={\mk g}\cap\wt{\mk b}$. Let
$$
\varphi : X \hookrightarrow \wt{X}
$$
be the associated embedding of flag manifolds.

The above choice of Borel subalgebras defines partitions $\Delta=\Delta^+\sqcup\Delta^-$ and $\wt{\Delta}=\wt{\Delta}^+\sqcup\wt{\Delta}^-$, which satisfy $\Delta^{\pm}=\Delta\cap\wt{\Delta}^{\pm}$. Let $\Pi\subset\Delta^+$ and $\wt{\Pi}\subset\wt{\Delta}^+$ be the sets of simple roots. Note that we have $\wt{\Pi}\cap\Delta\subset\Pi$, but not necessarily $\Pi\subset\wt{\Pi}$. As a consequence, the length of Weyl group elements may change under the inclusion ${\mc W}\subset\wt{\mc W}$.

The restriction of weights $\iota^*:\wt{\mk h}^*\lw \mk{h}^*$ is a morphism of ${\mc W}$-modules. More precisely, consider the subspaces of $\wt{\mk h}^*$ defined as $\mk{h}^*_0={\rm Ker}(\iota^*)$ and $\mk{h}^*_1=span_\CC\{\Delta\}$. Then $\wt{\mk h}^*=\mk{h}^*_0\oplus\mk{h}^*_1$ is direct sum of ${\mc W}$-modules orthogonal with respect to $\wt{\kappa}$. The restriction of $\iota^*$ to $\mk{h}^*_1$ is an isomorphism onto $\mk{h}^*$ preserving the root system $\Delta$, the ${\mc W}$ action, and mapping the integral weights $\wt{\mc P}\cap\mk{h}^*_1$ into the weight lattice ${\mc P}$. The action of ${\mc W}$ on ${\mk h}^*_0$ is trivial.

Let $h_{\alpha},e_{\alpha}$, for $\alpha\in\Delta$ be coroots and root vectors for ${\mk g}$, as in Section \ref{Par SSLieAlg}. Analogously, let $\wt{h}_{\wt{\alpha}},\wt{e}_{\wt{\alpha}}$, for $\wt{\alpha}=\alpha\in \wt{\Delta}$ be coroots and root vectors for $\wt{\mk g}$. In a case when $\wt{\alpha}\in\Delta$ we have a proportion of the root vectors $\wt{e}_{\wt{\alpha}}=ae_{\alpha}$ with $a\in\CC\setminus\{0\}$. Dualizing, we get the following lemma.

\begin{lemma}\label{Lem EdinRootRestric}
Let $\wt{\alpha}\in\wt{\Delta}^+$. The following are equivalent:

{\rm (i)} The root $\wt{\alpha}$ belongs to $\Delta$.

{\rm (ii)} $\varphi_o^*(\wt{e}^*_{-\wt{\alpha}}) \ne 0$.

{\rm (iii)} $\varphi_o^*(\wt{e}^*_{-\wt{\alpha}})=ae^*_{-\alpha}$ for some $\alpha\in \Delta$ and $a\in\CC\setminus\{0\}$.
\end{lemma}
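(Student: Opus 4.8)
The plan is to prove the chain of implications $(\mathrm{i})\Rightarrow(\mathrm{iii})\Rightarrow(\mathrm{ii})\Rightarrow(\mathrm{i})$, which is cleanest since $(\mathrm{iii})\Rightarrow(\mathrm{ii})$ is trivial. First I would recall the explicit description of $\varphi_o$ from section \ref{Sec Maps of Flags}: the map $\varphi_o=(d\varphi)_{x_o}$ restricted to $\mk{n}^{\pm}$ coincides with the restriction of the inclusion $\iota:\mk{g}\hookrightarrow\wt{\mk g}$ to $\mk{n}^{\pm}=\mk{g}\cap\wt{\mk n}^{\pm}$. So $\varphi_o:\wt{\mk n}\hookrightarrow$ is just the inclusion $\mk{n}\hookrightarrow\wt{\mk n}$, and since we are in the regular case $\mk{n}=\oplus_{\alpha\in\Delta^+}\wt{\mk g}^{\alpha}$ is spanned by those $\wt{e}_{\wt{\alpha}}$ with $\wt{\alpha}\in\Delta^+$; by the relation $\wt{e}_{\wt{\alpha}}=a_{\alpha}e_{\alpha}$ recorded just above the lemma, these are the $a_\alpha e_\alpha$.

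For $(\mathrm{i})\Rightarrow(\mathrm{iii})$: dualize. The transpose $\varphi_o^*:\wt{\mk n}^*\to\mk n^*$ sends a dual basis vector $\wt{e}^*_{-\wt{\alpha}}$ to the functional $e_\beta\mapsto\wt{e}^*_{-\wt{\alpha}}(\varphi_o(e_\beta))=\wt{e}^*_{-\wt{\alpha}}(a_\beta\wt{e}_{\wt\beta})$ for $\beta\in\Delta^+$. By the duality relation in $\wt{\mk n}^*$ this is $a_\beta\delta_{\wt\alpha,\wt\beta}$, so it equals $a_\alpha$ on $e_\alpha$ and $0$ on all other basis vectors of $\mk n$; that is, $\varphi_o^*(\wt{e}^*_{-\wt{\alpha}})=a_\alpha e^*_{-\alpha}$ with $a_\alpha\in\CC\setminus\{0\}$. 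This gives $(\mathrm{iii})$, and $(\mathrm{iii})\Rightarrow(\mathrm{ii})$ is immediate since $a_\alpha\ne 0$. For $(\mathrm{ii})\Rightarrow(\mathrm{i})$: if $\wt\alpha\notin\Delta$, then $\wt{\mk g}^{\wt\alpha}$ is not contained in $\mk g$, hence $\wt{e}_{\wt\alpha}\notin\mk n$, so $\wt{e}^*_{-\wt\alpha}$ annihilates $\varphi_o(\mk n)=\mk n\subset\wt{\mk n}$ (each $e_\beta$, $\beta\in\Delta^+$, is a multiple of some $\wt e_{\wt\beta}$ with $\wt\beta\ne\wt\alpha$), so $\varphi_o^*(\wt{e}^*_{-\wt\alpha})=0$, contradicting $(\mathrm{ii})$. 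Alternatively one observes that $\varphi_o^*$ is $\wt{\mk h}$-equivariant (proposition \ref{Prop ol(a)=a* regular}(ii)), so $\varphi_o^*(\wt{e}^*_{-\wt\alpha})$ is a weight vector of weight $-\wt\alpha|_{\mk h}$; if it is nonzero it must be a multiple of some $e^*_{-\beta}$ with $\beta\in\Delta^+$ and $\beta=\iota^*(\wt\alpha)$, forcing $\wt\alpha\in\Delta$.

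This is an essentially formal lemma and I do not expect a genuine obstacle; the only point requiring a little care is bookkeeping the two dual bases and the scalars $a_\alpha$ — in particular making sure the nonvanishing of $a_\alpha$ (which comes from $\iota$ being injective on each root space, a consequence of $\iota$ being a Lie algebra monomorphism) is used to get a nonzero scalar in $(\mathrm{iii})$. One should also state explicitly that in the regular case $\iota^*$ restricted to $\mk h_1^*$ identifies $\Delta$ with its image, so that ``$\alpha$'' appearing in $(\mathrm{iii})$ is well defined as $\iota^*(\wt\alpha)$.
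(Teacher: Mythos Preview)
Your argument is correct and is exactly the ``dualizing'' the paper invokes (the paper gives no further proof beyond that one word): you make explicit that $\varphi_o:\mk n\hookrightarrow\wt{\mk n}$ sends $e_\beta\mapsto a_\beta^{-1}\wt e_\beta$ for $\beta\in\Delta^+$, so the transpose sends $\wt e^*_{-\wt\alpha}$ to $a_\alpha e^*_{-\alpha}$ if $\wt\alpha\in\Delta$ and to $0$ otherwise. One small caution on your alternative weight argument for $(\mathrm{ii})\Rightarrow(\mathrm i)$: equality of restricted weights $\iota^*(\wt\alpha)=\beta$ alone need not force $\wt\alpha\in\Delta$ in a general regular embedding, so your first (direct) argument is the one to keep.
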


We can now derive another lemma.

\begin{lemma}\label{Lem HarmRestrictionForRegular}
Let $\wt{w}\in\wt{\mc W}$. Then the following are equivalent:

{\rm (i)} If $\wt{w}=\wt{s}_{\wt{\alpha}_1}\dots \wt{s}_{\wt{\alpha}_q}$ is any reduced expression for $\wt{w}$ in the generators $\wt{s}_{\wt{\alpha}},\wt{\alpha}\in\wt{\Pi}$, then $\wt{\alpha}_1,...,\wt{\alpha}_q$ are simple roots in $\Delta^+$.

{\rm (ii)} $\wt{w}\in{\mc W}$ and $l(\wt{w})=\wt{l}(\wt{w})$.

{\rm (iii)} $\Phi_{\wt{w}}\subset\Delta$.

{\rm (iv)} $\varphi_o^*(\wt{e}^*_{-\Phi_{\wt{w}}})\ne 0$.

{\rm (v)} $\varphi_o^*(\wt{e}^*_{-\Phi_{\wt{w}}}) = a e^*_{-\Phi_w}$ for some $w\in{\mc W}$ and $a\in\CC\setminus\{0\}$.
\end{lemma}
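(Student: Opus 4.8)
The plan is to prove that each of conditions (i), (ii), (iv) and (v) is equivalent to (iii), the statement $\Phi_{\wt{w}}\subset\Delta$, which is the easiest to manipulate. Start with \textbf{(iii) $\Leftrightarrow$ (iv) $\Leftrightarrow$ (v)}. Since the dual map $\varphi_o^*:\Lambda\wt{\mk n}^*\rw\Lambda{\mk n}^*$ is an algebra homomorphism, $\varphi_o^*(\wt{e}^*_{-\Phi_{\wt{w}}})=\bigwedge_{\wt{\alpha}\in\Phi_{\wt{w}}}\varphi_o^*(\wt{e}^*_{-\wt{\alpha}})$ up to a nonzero scalar. If this is nonzero then no factor vanishes, so $\Phi_{\wt{w}}\subset\Delta$ by Lemma \ref{Lem EdinRootRestric}; conversely, if $\Phi_{\wt{w}}\subset\Delta$, the same lemma gives $\varphi_o^*(\wt{e}^*_{-\wt{\alpha}})=a_{\wt{\alpha}}e^*_{-\wt{\alpha}}$ with $a_{\wt{\alpha}}\ne 0$, and as the roots of $\Phi_{\wt{w}}$ are pairwise distinct the wedge of the $e^*_{-\wt{\alpha}}$ equals $e^*_{-\Phi_{\wt{w}}}$ up to sign, hence is nonzero. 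Thus $\varphi_o^*(\wt{e}^*_{-\Phi_{\wt{w}}})$ is a nonzero multiple of $e^*_{-\Phi_{\wt{w}}}$. This gives (iii) $\Leftrightarrow$ (iv), and (v) $\Rightarrow$ (iv) is trivial; the implication (iii) $\Rightarrow$ (v) follows once one knows (from (ii)) that $\wt{w}\in{\mc W}$, since then one takes $w:=\wt{w}$ and $\Phi_{\wt{w}}=\Phi_w$ by the identity established below.

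The core is \textbf{(iii) $\Leftrightarrow$ (ii)}. The essential observation is that for $w\in{\mc W}$, viewed inside $\wt{\mc W}$, one has $\Delta^+\cap\Phi_{\wt{w}}=\Phi_w$, the inversion set of $w$ with respect to $\Delta^+$: for $\beta\in\Delta^+\subset\wt{\Delta}^+$ we have $\wt{w}(\beta)=w(\beta)\in\Delta$ and $\Delta^-=\Delta\cap\wt{\Delta}^-$, so $\wt{w}(\beta)\in\wt{\Delta}^-$ iff $w(\beta)\in\Delta^-$. For (iii) $\Rightarrow$ (ii) I would induct on $\wt{l}(\wt{w})=|\Phi_{\wt{w}}|$: if $\wt{w}\ne e$, choose $\wt{\alpha}\in\wt{\Pi}$ with $\wt{w}(\wt{\alpha})\in\wt{\Delta}^-$; then $\wt{\alpha}\in\Phi_{\wt{w}}\subset\Delta$, so $\wt{\alpha}\in\wt{\Pi}\cap\Delta\subset\Pi$ and $\wt{s}_{\wt{\alpha}}=s_{\wt{\alpha}}\in{\mc W}$. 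The standard identity $\Phi_{\wt{w}\wt{s}_{\wt{\alpha}}}=\wt{s}_{\wt{\alpha}}(\Phi_{\wt{w}}\setminus\{\wt{\alpha}\})$, together with the fact that $s_{\wt{\alpha}}$ preserves $\Delta$, shows that $\Phi_{\wt{w}\wt{s}_{\wt{\alpha}}}\subset\Delta$ and has smaller cardinality; by induction $\wt{w}\wt{s}_{\wt{\alpha}}\in{\mc W}$ with $l=\wt{l}$, hence $\wt{w}=(\wt{w}\wt{s}_{\wt{\alpha}})s_{\wt{\alpha}}\in{\mc W}$, and the identity $\Delta^+\cap\Phi_{\wt{w}}=\Phi_w$ gives $l(\wt{w})=|\Phi_w|=|\Phi_{\wt{w}}|=\wt{l}(\wt{w})$ since $\Phi_{\wt{w}}$ consists of positive roots. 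Conversely, (ii) forces the inclusion $\Phi_w=\Delta^+\cap\Phi_{\wt{w}}\subseteq\Phi_{\wt{w}}$ to be an equality of sets of the same cardinality, so $\Phi_{\wt{w}}=\Phi_w\subset\Delta$.

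Finally, \textbf{(iii) $\Leftrightarrow$ (i)}. For (i) $\Rightarrow$ (iii), fix one reduced expression $\wt{w}=\wt{s}_{\wt{\alpha}_1}\cdots\wt{s}_{\wt{\alpha}_q}$: each element of $\Phi_{\wt{w}}$ is obtained by applying a product of the reflections $\wt{s}_{\wt{\alpha}_i}$ to one of the $\wt{\alpha}_i$, and since by hypothesis all $\wt{\alpha}_i$ lie in $\Pi\subset\Delta$, each such reflection lies in ${\mc W}$ and preserves $\Delta$, whence $\Phi_{\wt{w}}\subset\Delta$. For (iii) $\Rightarrow$ (i) one induct on $\wt{l}(\wt{w})$ exactly as above: in any reduced expression the last reflection $\wt{s}_{\wt{\alpha}_q}$ satisfies $\wt{w}(\wt{\alpha}_q)\in\wt{\Delta}^-$, so $\wt{\alpha}_q\in\Phi_{\wt{w}}\subset\Delta$ and $\wt{\alpha}_q\in\wt{\Pi}\cap\Delta\subset\Pi$; truncating yields a reduced expression of $\wt{w}\wt{s}_{\wt{\alpha}_q}$, whose inversion set is again contained in $\Delta$, so the inductive hypothesis forces all the remaining $\wt{\alpha}_i$ into $\Pi$.

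I expect the main difficulty to be bookkeeping rather than conceptual: one must check carefully that the generator $\wt{s}_{\wt{\alpha}}$ of $\wt{\mc W}$ indexed by a root $\wt{\alpha}\in\wt{\Pi}\cap\Delta$ really coincides with the corresponding element $s_{\wt{\alpha}}$ of the subgroup ${\mc W}\subset\wt{\mc W}$ (this is what makes the two length functions comparable through inversion sets), and it is precisely the inclusion $\wt{\Pi}\cap\Delta\subset\Pi$ recorded in the text—which does \emph{not} hold in reverse in general—that keeps the inductive step inside $\Delta$. The combinatorial facts invoked (the formula $\Phi_{\wt{w}\wt{s}_{\wt{\alpha}}}=\wt{s}_{\wt{\alpha}}(\Phi_{\wt{w}}\setminus\{\wt{\alpha}\})$ when $\wt{\alpha}$ is simple and lies in $\Phi_{\wt{w}}$, and the description of $\Phi_{\wt{w}}$ from a reduced word) are standard and I would quote them without reproving.
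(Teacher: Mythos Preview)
Your proposal is correct and follows exactly the same approach as the paper, which simply states that (i)$\Leftrightarrow$(ii)$\Leftrightarrow$(iii) follow from elementary Weyl group combinatorics and that (iii)$\Leftrightarrow$(iv)$\Leftrightarrow$(v) are deduced from Lemma~\ref{Lem EdinRootRestric}. You have carefully supplied the details the paper omits, including the key inductive step through $\wt{\Pi}\cap\Delta\subset\Pi$ and the identity $\Delta^+\cap\Phi_{\wt{w}}=\Phi_w$ for $w\in{\mc W}$.
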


\begin{proof} The equivalences (i)$\Leftrightarrow$(ii)$\Leftrightarrow$(iii) follow from elementary combinatorics of Weyl groups. The equivalences (iii)$\Leftrightarrow$(iv)$\Leftrightarrow$(v) are deduced from Lemma \ref{Lem EdinRootRestric}.
\end{proof}

\begin{theorem}\label{Theo Regular embedding}
Let $\wt{\lambda}\in\wt{\mc P}$ be a regular weight. Denote $\lambda=\iota^*(\wt{\lambda})$ and $\wt{w}=\wt{w}_{\wt{\lambda}}$. Then the nonvanishing of
$$
\pi^{\wt{\lambda}}:H(\wt{X},\wt{\mc O}(\wt{\lambda})) \lw H(X,{\mc O}(\lambda))
$$
is equivalent to each of the conditions {\rm (i)-(v)} in Lemma \ref{Lem HarmRestrictionForRegular}.

Moreover, if $\pi^{\wt{\lambda}}\ne 0$, then

{\rm (a)} for the element $w$ in condition {\rm (v)} we have $w=\wt{w}=w_{\lambda}$;

{\rm (b)} we have $\iota^*(\wt{w}\cdot\wt{\lambda})=w\cdot\lambda$ and the cohomological component given by $(\pi^{\wt{\lambda}})^*:V(w\cdot\lambda)\hookrightarrow\wt{V}(\wt{w}\cdot\wt{\lambda})$ is the highest component, i.e. generated by a $\wt{\mk b}$-highest weight vector.
\end{theorem}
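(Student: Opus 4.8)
The plan is to deduce everything from Theorem \ref{Theo Necessity} together with the combinatorial analysis of regular embeddings already in place. The first task is to establish the equivalence of nonvanishing of $\pi^{\wt{\lambda}}$ with conditions (i)--(v) of Lemma \ref{Lem HarmRestrictionForRegular}. By Theorem \ref{Theo Necessity}, $\pi^{\wt{\lambda}}\ne 0$ is equivalent to the conjunction of condition (i) (which is precisely condition (v) of Lemma \ref{Lem HarmRestrictionForRegular}) and condition (ii). So the real content is showing that, in the regular case, (v) of Lemma \ref{Lem HarmRestrictionForRegular} already forces (ii). Here is where I would do the work: assuming $\varphi_o^*(\wt{e}^*_{-\Phi_{\wt{w}}}) = a\, e^*_{-\Phi_w}$ with $a\ne 0$, I claim the relevant extreme weight vector $\wt{v}^{\wt{\lambda}+\langle\Phi_{\wt{w}}\rangle}$ is in fact a $\wt{\mk b}$-highest weight vector of $\wt{V}(\wt{w}\cdot\wt{\lambda})$, up to scalar. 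Indeed, by (\ref{For w^{-1}(w.lambda)}) we have $\wt{w}^{-1}(\wt{w}\cdot\wt{\lambda}) = \wt{\lambda}+\langle\Phi_{\wt{w}}\rangle$, so this vector is the extreme weight vector of weight $\wt{w}^{-1}(\wt{w}\cdot\wt{\lambda})$. The key observation is that condition (iii) of Lemma \ref{Lem HarmRestrictionForRegular}, $\Phi_{\wt{w}}\subset\Delta$, together with $l(\wt w)=\wt l(\wt w)$ from (ii), means $\wt w\in\mc W$: the inversions of $\wt w$ are all $\mk g$-roots. Then $\wt v^{\wt w^{-1}(\wt w\cdot\wt\lambda)}$ generates, under $\mk U(\mk g)$, a module whose $\mk b$-highest weight I must identify.

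The cleanest route for part (ii): since $\wt{w}\in{\mc W}$, the vector $\wt{v}^{\wt{w}^{-1}(\wt{w}\cdot\wt{\lambda})}$, being an extreme weight vector obtained from the $\wt{\mk b}$-highest weight vector $\wt{v}^{\wt{w}\cdot\wt{\lambda}}$ by applying the representative of $\wt{w}^{-1}$ in $N_{\wt G}(\wt H)$, lies in the $G$-orbit of the $\wt{\mk b}$-highest vector; more precisely $\mk U(\mk g)\wt v^{\wt w^{-1}(\wt w\cdot\wt\lambda)}$ contains the $\mk b$-highest vector of $\wt V(\wt w\cdot\wt\lambda)$ since $n_{\wt w}^{-1}\wt v^{\wt w\cdot\wt\lambda}$ spans the same line and $n_{\wt w}\in N_G(H)\subset G$. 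Hence $\mk U(\mk g)\wt v^{\wt w^{-1}(\wt w\cdot\wt\lambda)}$ is exactly $V(\mu')$ where $\mu'$ is the $\mk b$-highest weight, which is $\iota^*(\wt w\cdot\wt\lambda)$ — and I must check this equals $w\cdot\lambda$. By Remark \ref{Zab H^0->H^0}, $\iota^*(\wt{\mc P}^+)\subset{\mc P}^+$, and since $\wt w\cdot\wt\lambda\in\wt{\mc P}^+$, its restriction $\iota^*(\wt w\cdot\wt\lambda)$ is dominant; I then verify $\iota^*(\wt w\cdot\wt\lambda)$ lies in the affine ${\mc W}$-orbit of $\lambda=\iota^*(\wt\lambda)$ using that $\iota^*$ intertwines the affine actions once $\iota^*\langle\Phi_{\wt w}\rangle = \langle\Phi_w\rangle$ (from (i)) and $\rho$ restricts compatibly. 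Uniqueness of the dominant representative then gives $\iota^*(\wt w\cdot\wt\lambda)=w\cdot\lambda$ and simultaneously $w=w_\lambda$; comparing lengths via Lemma \ref{Lem HarmRestrictionForRegular}(ii) and the uniqueness of $w_\lambda$ gives $w=\wt w$, proving (a).

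For part (b): the identity $\iota^*(\wt w\cdot\wt\lambda)=w\cdot\lambda$ is exactly what was just established. That the cohomological component is the highest component follows because $(\pi^{\wt\lambda})^*$ sends the harmonic representative to one built from $\wt v^{\wt w^{-1}(\wt w\cdot\wt\lambda)}$, and we showed this generates $V(w\cdot\lambda)\cong\mk U(\mk g)\wt v^{\wt w^{-1}(\wt w\cdot\wt\lambda)}$ which contains — in fact, because $\wt w\in\mc W$, is generated via $N_G(H)$ from — the $\wt{\mk b}$-highest weight vector of $\wt V(\wt w\cdot\wt\lambda)$. Thus the image of $(\pi^{\wt\lambda})^*$ is the $G$-submodule generated by a $\wt{\mk b}$-highest weight vector, which is the definition of the highest component.

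The main obstacle I anticipate is the second paragraph: carefully justifying that $\mk U(\mk g)\wt v^{\wt w^{-1}(\wt w\cdot\wt\lambda)}$ is irreducible with $\mk b$-highest weight $\iota^*(\wt w\cdot\wt\lambda)$, and pinning down that this restricted weight is genuinely $w\cdot\lambda$ rather than merely affine-congruent to it. This requires being precise about how $\iota^*$ interacts with $\rho$ versus $\wt\rho$ — note $\iota^*(\wt\rho)$ need not equal $\rho$ since $\wt\Pi\cap\Delta$ may be a proper subset of $\Pi$ — so the argument must go through $\iota^*\langle\Phi_{\wt w}\rangle = \langle\Phi_w\rangle$ and the formula $w^{-1}(w\cdot\lambda)=\lambda-w^{-1}\cdot 0 = \lambda + \langle\Phi_w\rangle$ rather than any naive compatibility of $\rho$'s. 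Everything else is bookkeeping with Kostant harmonics and the combinatorics already assembled in Lemmas \ref{Lem EdinRootRestric} and \ref{Lem HarmRestrictionForRegular}.
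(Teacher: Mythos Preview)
Your approach is the paper's: invoke Theorem \ref{Theo Necessity}, then show that in the regular case condition (v) of Lemma \ref{Lem HarmRestrictionForRegular} already forces condition (ii) of Theorem \ref{Theo Necessity}, by using $\wt w\in\mc W$ to conclude that the extreme weight vector $\wt v^{\wt\lambda+\langle\Phi_{\wt w}\rangle}$ and the $\wt{\mk b}$-highest vector $\wt v^{\wt w\cdot\wt\lambda}$ lie in the same $G$-orbit (via a representative $n_{\wt w}\in N_G(H)$), hence generate the same irreducible $G$-module. That module is then the highest component, giving (b).

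One slip: in your first paragraph you write that $\wt v^{\wt\lambda+\langle\Phi_{\wt w}\rangle}$ ``is in fact a $\wt{\mk b}$-highest weight vector''. It is not; its $\wt{\mk h}$-weight is $\wt w^{-1}(\wt w\cdot\wt\lambda)\ne\wt w\cdot\wt\lambda$ unless $\wt w=e$. What you mean, and argue correctly in the next paragraph, is that it lies in the $G$-orbit of the $\wt{\mk b}$-highest vector. The paper phrases this equivalently: the vector is an eigenvector for the conjugated Borel $\wt w^{-1}B\wt w\subset\wt w^{-1}\wt B\wt w$, hence an extreme weight vector of an irreducible $G$-submodule.

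For the identity $\iota^*(\wt w\cdot\wt\lambda)=w\cdot\lambda$ your indirect route (dominant plus affine-congruent, then uniqueness) works, but the paper's computation is cleaner and bypasses the $\wt\rho$ versus $\rho$ issue you flagged. Using $w=\wt w$, the $\mc W$-equivariance of $\iota^*$ for the \emph{linear} action (recorded in the regular-embedding setup), and $\sigma\cdot\nu=\sigma(\nu+\langle\Phi_\sigma\rangle)$, one gets directly
\[
w\cdot\lambda \;=\; w(\lambda+\langle\Phi_w\rangle) \;=\; w\bigl(\iota^*(\wt\lambda+\langle\Phi_{\wt w}\rangle)\bigr) \;=\; \iota^*\bigl(\wt w(\wt\lambda+\langle\Phi_{\wt w}\rangle)\bigr) \;=\; \iota^*(\wt w\cdot\wt\lambda),
\]
which also immediately gives $w=w_\lambda$ and hence (a).
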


\begin{proof} The necessity of condition (iv) for the nonvanishing of $\pi^{\wt{\lambda}}$ follows from Theorem \ref{Theo Necessity}. Let us prove sufficiency. The conditions of the lemma yield $\wt{w}\in{\mc W}$. Hence the conjugation by $\wt{w}$ preserves $G$ as a subgroup of $\wt{G}$, and acts on $G$ as a Weyl group element $w\in{\mc W}$ preserving $H$. In particular, $\wt{w}^{-1}B\wt{w}$ is a Borel subgroup of $G$ contained in the Borel subgroup $\wt{w}^{-1}\wt{B}\wt{w}$ of $\wt{G}$. It follows that the extreme weight vector $\wt{v}^{\wt{\lambda}+\langle\Phi_{\wt{w}}\rangle}\in \wt{V}(\wt{w}\cdot\wt{\lambda})$ is an eigenvector for $\wt{w}^{-1}B\wt{w}$, and hence is an extreme weight vector for some $G$-submodule of $\wt{V}(\wt{w}\cdot\wt{\lambda})$. Since (v) implies $\iota^*\langle\Phi_{\wt{w}}\rangle=\langle\Phi_w\rangle$, the ${\mk h}$-weight of $\wt{v}^{\wt{\lambda}+\langle\Phi_{\wt{w}}\rangle}$ is $\iota^*(\wt{\lambda}+\langle\Phi_{\wt{w}}\rangle)=\lambda+\langle\Phi_w\rangle$. We can conclude that ${\mk U}(\mk g)\wt{v}^{\wt{\lambda}+\langle\Phi_{\wt{w}}\rangle}$ is an irreducible $G$-module with $B$-highest weight $w(\lambda+\langle\Phi_w\rangle)=w\cdot\lambda$, that is ${\mk U}(\mk g)\wt{v}^{\wt{\lambda}+\langle\Phi_{\wt{w}}\rangle}\cong V(w\cdot\lambda)$. Thus condition (ii) of Theorem \ref{Theo Necessity} is fulfilled and the map $\varpi^{\wt{\lambda}}$ (and consequently $\pi^{\wt{\lambda}}$) is nonzero. A particular harmonic cocycle whose pullback remains harmonic is
$$
\wt{e}^*_{-\Phi_{\wt{w}}}\otimes \wt{f}_{-\wt{\lambda}-\langle\Phi_{\wt{w}}\rangle} \otimes \wt{v}^{\wt{\lambda}+\langle\Phi_{\wt{w}}\rangle} \in C(\wt{\mk n},\wt{V}(\wt{w}\cdot\wt{\lambda})^*\otimes \wt{V}(\wt{w}\cdot\wt{\lambda}))^{\wt{\lambda}} \;.
$$

To prove the second statement of the theorem, suppose $\pi^{\wt{\lambda}}\ne 0$. Then the condition in Lemma \ref{Lem HarmRestrictionForRegular} hold and from the first part of the proof we see that $w=\wt{w}=w_\lambda$, so (a) holds. To prove (b) note that we have $w(\iota^*(\wt{\nu}))=\iota^*(\wt{w}(\wt{\nu}))$ for $\wt{\nu}\in\wt{\mc P}$. We compute
$$
w\cdot\lambda = w(\lambda+\langle\Phi_w\rangle) = w(\iota*(\wt{\lambda}+\langle\Phi_{\wt{w}}\rangle)) = \iota^*(\wt{w}(\wt{\lambda}+\langle\Phi_{\wt{w}}\rangle)) = \iota^*(\wt{w}\cdot\wt{\lambda}) \;.
$$
Finally, observe that $w$ acting on $\wt{V}(\wt{w}\cdot\wt{\lambda})$ sends $\wt{v}^{\wt{\lambda}+\langle\Phi_{\wt{w}}\rangle}$ to $\wt{v}^{\wt{w}\cdot\wt{\lambda}}$. Hence the highest weight vector belongs to the cohomological component ${\mk U}(\mk g)\wt{v}^{\wt{\lambda}+\langle\Phi_{\wt{w}}\rangle}$.
\end{proof}

\begin{coro}\label{Coro Reg C=C_0}
For a regular embedding $\varphi:G/B\hookrightarrow\wt{G}/\wt{B}$ we have ${\mc C}(\varphi)={\mc C}_0(\varphi)$, i.e. all cohomological components can be obtained from pullbacks in cohomological degree 0. (See (\ref{For C in intro}) and Remark \ref{Zab H^0->H^0} for the notation.)
\end{coro}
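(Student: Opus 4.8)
The plan is to prove the two inclusions ${\mc C}_0(\varphi)\subseteq{\mc C}(\varphi)$ and ${\mc C}(\varphi)\subseteq{\mc C}_0(\varphi)$ separately; essentially all the substantive work is already contained in theorem \ref{Theo Regular embedding}, so the corollary amounts to a bookkeeping exercise.

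For ${\mc C}_0(\varphi)\subseteq{\mc C}(\varphi)$ there is nothing beyond unwinding definitions. By remark \ref{Zab H^0->H^0}, for any $\wt{\mu}\in\wt{\mc P}^+$ the pullback $\pi^{\wt{\mu}}$ on global sections is nonzero and $(\pi^{\wt{\mu}})^*$ realizes $V(\iota^*(\wt{\mu}))$ as a submodule of $\wt{V}(\wt{\mu})$, so every pair $(\iota^*(\wt{\mu}),\wt{\mu})$ of ${\mc C}_0(\varphi)$ is by definition a cohomological pair. This inclusion does not use regularity and in fact holds in general, cf. remark \ref{Zab H^0->H^0}.

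For the reverse inclusion, let $(\mu,\wt{\mu})\in{\mc C}(\varphi)$. By definition there is $\wt{\lambda}\in\wt{\mc P}$ with $\pi^{\wt{\lambda}}\ne 0$ whose dual realizes $V(\mu)\hookrightarrow\wt{V}(\wt{\mu})$. Then $\wt{\lambda}$ and $\lambda=\iota^*(\wt{\lambda})$ are necessarily regular (otherwise $\pi^{\wt{\lambda}}$ vanishes for trivial reasons), and setting $\wt{w}=\wt{w}_{\wt{\lambda}}$, $w=w_{\lambda}$ the Borel--Weil--Bott theorem identifies $\pi^{\wt{\lambda}}$ with a map $\wt{V}(\wt{w}\cdot\wt{\lambda})^*\to V(w\cdot\lambda)^*$; since dominant weights determine irreducible modules we must have $\wt{w}\cdot\wt{\lambda}=\wt{\mu}$ and $w\cdot\lambda=\mu$. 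Now apply part (b) of theorem \ref{Theo Regular embedding}: for a regular embedding the nonvanishing of $\pi^{\wt{\lambda}}$ forces $\iota^*(\wt{w}\cdot\wt{\lambda})=w\cdot\lambda$, that is $\mu=\iota^*(\wt{\mu})$, so $(\mu,\wt{\mu})\in{\mc C}_0(\varphi)$. Part (b) moreover tells us that the component $V(\mu)\subset\wt{V}(\wt{\mu})$ is the highest one, hence literally the same realization as the degree-$0$ pullback, although for the set-theoretic equality only the equality of weights is needed.

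No genuine obstacle arises; the corollary is a repackaging of theorem \ref{Theo Regular embedding}(b). The only point requiring a little care is the first step of the reverse inclusion, namely translating the membership $(\mu,\wt{\mu})\in{\mc C}$ into the hypotheses of theorem \ref{Theo Regular embedding} --- i.e. producing a regular $\wt{\lambda}$ with $\wt{w}_{\wt{\lambda}}\cdot\wt{\lambda}=\wt{\mu}$, $w_{\iota^*\wt{\lambda}}\cdot\iota^*(\wt{\lambda})=\mu$ and $\pi^{\wt{\lambda}}\ne 0$ --- after which $\mu=\iota^*(\wt{\mu})$ is immediate.
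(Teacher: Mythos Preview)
Your argument is correct and is precisely the intended deduction: the paper states the corollary without proof, treating it as an immediate consequence of theorem \ref{Theo Regular embedding}(b), and your two-inclusion write-up makes that deduction explicit.
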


\subsection{Diagonal embeddings}\label{Sec Diagonal}

In this section we present a new proof of a theorem of Dimitrov and Roth, cf. \cite{Dimitrov-Roth long}, which answers problem I for diagonal embeddings. Our proof is derived from the general Theorem \ref{Theo Necessity}. The specific feature here is that condition (i) of Theorem \ref{Theo Necessity} is necessary and sufficient for the nonvanishing of $\pi^{\wt{\lambda}}$. The fact that condition (ii) can be dropped is deduced from a theorem of Kumar and Mathieu (cf. \cite{Kumar-PRV} and \cite{Mathieu} respectively), known as the PRV conjecture.

Consider here the diagonal inclusion of ${\mk g}$ into $\wt{\mk g}={\mk g}\oplus{\mk g}$,
$$
\iota: {\mk g} \hookrightarrow {\mk g}\oplus{\mk g} \quad,\quad \iota(x)=(x,x) \;.
$$
In this case we also have an inclusion on the level of simply connected Lie groups $\phi:G\hookrightarrow G\times G=\wt{G}$. Furthermore, any Borel subgroup $B\subset G$ is contained in a unique Borel subgroup of $\wt{G}$, namely $\wt{B}=B\times B$. We now assume that $B$ and $\wt{B}=B\times B$ are fixed. This results in a diagonal embedding of the flag manifold $X=G/B$ into $\wt{X}=X\times X$:
$$
\varphi : X \hookrightarrow X\times X \quad,\quad \varphi(x)=(x,x) \;.
$$
Let $K\subset G$ be a maximal compact subgroup. Put $T=K\cap B$ and $H=C_G(T)$; these are maximal toral subgroups of $K$ and $G$ respectively. Then $\wt{K}=K\times K$, $\wt{T}=T\times T$, and $\wt{H}=H\times H$ are respectively a maximal compact subgroup of $\wt{G}$, a maximal torus in $\wt{K}$, and a maximal complex torus in $\wt{G}$. These subgroups also satisfy the conditions we need, namely $K=G\cap\wt{K}$, $T=K\cap\wt{T}$, $H=G\cap\wt{H}$. With this all choices required for our work, as set in the beginning of Section \ref{Sec Maps of Flags}, are made and we can use the relevant notation.

The weight lattice of $\wt{G}$ is $\wt{\mc P}={\mc P}\oplus {\mc P}$ and the pullback $\iota^*:{\mc P}\oplus{\mc P}\lw{\mc P}$ is given by
$$
\iota^*(\lambda_1,\lambda_2)=\lambda_1+\lambda_2 \;.
$$
The Weyl group of $\wt{G}$ is the direct product $\wt{\mc W}={\mc W}\times {\mc W}$; the root system is $\wt{\Delta}=(\Delta\times\{0\})\cup(\{0\}\times\Delta)$. The inversion set of a given element $\wt{w}=(w_1,w_2)\in\wt{\mc W}$ has the form
$$
\Phi_{\wt{w}}=(\Phi_{w_1}\times\{0\})\cup(\{0\}\times\Phi_{w_2}) \;.
$$
This implies $\wt{l}(\wt{w})=l(w_1)+l(w_2)$. The lemma below follows immediately from the definitions.

\begin{lemma}
Let $\wt{\lambda}=(\lambda_1,\lambda_2)\in\wt{\mc P}$. Then:

{\rm (i)} $\wt{\lambda}$ is a regular weight in $\wt{\mc P}$ if and only if both $\lambda_1$ and $\lambda_2$ are regular weights in ${\mc P}$.

{\rm (ii)} If $\wt{\lambda}$ is regular weight, then $\wt{w}_{\wt{\lambda}}=(w_{\lambda_1},w_{\lambda_2})$.
\end{lemma}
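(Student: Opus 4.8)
The plan is to reduce both statements to the product structure of the affine Weyl group action. First I would record that the root system of $\wt{\mk g}={\mk g}\oplus{\mk g}$ is $\wt{\Delta}=(\Delta\times\{0\})\cup(\{0\}\times\Delta)$ with positive system $\wt{\Delta}^+=(\Delta^+\times\{0\})\cup(\{0\}\times\Delta^+)$, so the half-sum of positive roots is $\wt{\rho}=(\rho,\rho)$. Consequently, for $\wt{w}=(w_1,w_2)\in\wt{\mc W}={\mc W}\times{\mc W}$ and $\wt{\lambda}=(\lambda_1,\lambda_2)$ the affine action of section \ref{Sec Weyl group} splits coordinate-wise:
$$
\wt{w}\cdot\wt{\lambda}=\wt{w}(\wt{\lambda}+\wt{\rho})-\wt{\rho}=(w_1(\lambda_1+\rho)-\rho,\;w_2(\lambda_2+\rho)-\rho)=(w_1\cdot\lambda_1,\;w_2\cdot\lambda_2)\;.
$$
I would also note $\wt{\mc P}^+={\mc P}^+\times{\mc P}^+$, since the dominant chamber of a direct product of root systems is the product of the dominant chambers.

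For (i): a weight is regular precisely when some element of the Weyl group carries it into the dominant chamber under the affine action. By the displayed formula, $\wt{\lambda}$ is regular if and only if there is $\wt{w}=(w_1,w_2)$ with $(w_1\cdot\lambda_1,w_2\cdot\lambda_2)\in{\mc P}^+\times{\mc P}^+$; since the two coordinates are independent, this holds if and only if there exist $w_1,w_2\in{\mc W}$ with $w_1\cdot\lambda_1\in{\mc P}^+$ and $w_2\cdot\lambda_2\in{\mc P}^+$, i.e. if and only if both $\lambda_1$ and $\lambda_2$ are regular.

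For (ii): assume $\wt{\lambda}$ is regular, so that $\lambda_1$ and $\lambda_2$ are regular by (i). Then $(w_{\lambda_1},w_{\lambda_2})\cdot\wt{\lambda}=(w_{\lambda_1}\cdot\lambda_1,w_{\lambda_2}\cdot\lambda_2)\in{\mc P}^+\times{\mc P}^+$, so the element $(w_{\lambda_1},w_{\lambda_2})$ sends $\wt{\lambda}$ into $\wt{\mc P}^+$. Since the Weyl group element carrying a regular weight to the dominant chamber is unique (section \ref{Sec Weyl group}), we conclude $\wt{w}_{\wt{\lambda}}=(w_{\lambda_1},w_{\lambda_2})$.

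The argument is entirely formal, so I do not anticipate any genuine obstacle; the only point needing a moment's care is the identification $\wt{\rho}=(\rho,\rho)$, which is what makes the affine action --- and not merely the linear action --- factor through the two coordinates.
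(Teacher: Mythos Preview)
Your proof is correct and is essentially the same approach as the paper's, which simply states that the lemma ``follows immediately from the definitions.'' You have spelled out exactly those definitions --- the product structure of $\wt{\mc W}$, $\wt{\mc P}^+$, and $\wt{\rho}$ making the affine action split coordinate-wise --- which is precisely what the paper leaves implicit.
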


Now, let $\wt{\lambda}=(\lambda_1,\lambda_2)\in\wt{\mc P}$ be a regular weight. K${\rm\ddot{u}}$nneth's formula implies that
$$
H^{\wt{l}(\wt{\lambda})}(\wt{X},\wt{\mc O}(\wt{\lambda})) \cong H^{l(\lambda_1)}(X,{\mc O}(\lambda_1)) \otimes H^{l(\lambda_2)}(X,{\mc O}(\lambda_2)) \;.
$$
Put $\lambda=\iota^*(\wt{\lambda})=\lambda_1+\lambda_2\in{\mc P}$. The pullback $\pi^{\wt{\lambda}}$ can be interpreted as the cup product map
$$
\pi^{\wt{\lambda}} : H^{l(\lambda_1)}(X,{\mc O}(\lambda_1)) \otimes H^{l(\lambda_2)}(X,{\mc O}(\lambda_2)) \lw H^{l(\lambda_1)+l(\lambda_2)}(X,{\mc O}(\lambda_1+\lambda_2)) \;.
$$
The reciprocity law described in Section \ref{Sec Cohomologies} allows us to translate this map to a pullback $\varpi^{\wt{\lambda}}$ in Lie algebra cohomology. The latter map is expressed as a wedge product,
\begin{gather*}
\varpi^{\wt{\lambda}} : H({\mk n},{\mc F}(K)\otimes\CC_{-\lambda_1})\otimes H({\mk n},{\mc F}(K)\otimes\CC_{-\lambda_2}) \; \lw \; H({\mk n},{\mc F}(K)\otimes\CC_{-\lambda}) \quad \\
\quad\quad (e^*_{-\Phi_1}\otimes F_1 \otimes z_1) \otimes (e^*_{-\Phi_2}\otimes F_2 \otimes z_2) \; \lo \; (e^*_{-\Phi_1}\wedge e^*_{-\Phi_2}) \otimes (F_1 \otimes F_2) \otimes (z_1\otimes z_2)
\end{gather*}

\begin{theorem}\label{Theo diagonal} {\rm (Dimitrov-Roth, \cite{Dimitrov-Roth long})}
Let $\wt{\lambda}=(\lambda_1,\lambda_2)\in\wt{\mc P}$, and $\lambda=\lambda_1+\lambda_2\in{\mc P}$. Suppose the weights $\lambda_1,\lambda_2,\lambda$ are regular, and put $w_1=w_{\lambda_1}$, $w_2=w_{\lambda_2}$, $w=w_{\lambda}$. Then the following are equivalent:

{\rm (i)} $\pi^{\wt{\lambda}}\ne 0$.

{\rm (ii)} $\varphi_o^*(\wt{e}^*_{-\Phi_{\wt{w}}}) = e^*_{-\Phi_w}$.

{\rm (iii)} $\Phi_{w_1}\sqcup\Phi_{w_2}=\Phi_w$.
\end{theorem}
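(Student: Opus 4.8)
The plan is to deduce the theorem from the general criterion of Theorem~\ref{Theo Necessity}, together with an explicit computation of $\varphi_o^*$ in the diagonal case and a single input from tensor product theory. By Theorem~\ref{Theo Necessity}, statement (i) is equivalent to the conjunction of conditions (i) and (ii) of that theorem, and condition (i) of that theorem is literally statement (ii) here. So the task reduces to proving the equivalence (ii)~$\Leftrightarrow$~(iii) and showing that (ii) alone already forces condition (ii) of Theorem~\ref{Theo Necessity}.

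For the equivalence (ii)~$\Leftrightarrow$~(iii) I would use the explicit form of $\varphi_o^*$ recorded just before the theorem: under the K\"unneth identification $\Lambda\wt{\mk n}^*\cong\Lambda{\mk n}^*\otimes\Lambda{\mk n}^*$ the map $\varphi_o^*$ is the wedge product, and since $\Phi_{\wt w}=(\Phi_{w_1}\times\{0\})\sqcup(\{0\}\times\Phi_{w_2})$ the cocycle $\wt e^*_{-\Phi_{\wt w}}$ corresponds to $e^*_{-\Phi_{w_1}}\otimes e^*_{-\Phi_{w_2}}$. Hence $\varphi_o^*(\wt e^*_{-\Phi_{\wt w}})=e^*_{-\Phi_{w_1}}\wedge e^*_{-\Phi_{w_2}}$, which vanishes exactly when $\Phi_{w_1}\cap\Phi_{w_2}\neq\emptyset$ and otherwise equals $\pm e^*_{-(\Phi_{w_1}\cup\Phi_{w_2})}$. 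Because the $e^*_{-\Phi}$, $\Phi\subset\Delta^+$, are distinct basis vectors of $\Lambda{\mk n}^*$, this is a nonzero scalar multiple of $e^*_{-\Phi_w}$ precisely when $\Phi_{w_1}$ and $\Phi_{w_2}$ are disjoint and $\Phi_{w_1}\cup\Phi_{w_2}=\Phi_w$, i.e.\ precisely when $\Phi_{w_1}\sqcup\Phi_{w_2}=\Phi_w$ (the scalar is $\pm1$, and becomes $1$ after a compatible ordering of $\Delta^+$). In particular such a pair automatically satisfies $l(w_1)+l(w_2)=l(w)$, so the two sides of $\pi^{\wt\lambda}$ lie in the same cohomological degree.

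Next I would show that (iii) forces condition (ii) of Theorem~\ref{Theo Necessity}. Under Borel--Weil--Bott and K\"unneth, $\wt V(\wt w\cdot\wt\lambda)=V(\mu_1)\otimes V(\mu_2)$ with $\mu_j=w_j\cdot\lambda_j$, and by formula~(\ref{For w^{-1}(w.lambda)}) the extreme weight vector in condition (ii) of Theorem~\ref{Theo Necessity} is $\wt v^{\,\wt\lambda+\langle\Phi_{\wt w}\rangle}=v^{w_1^{-1}(\mu_1)}\otimes v^{w_2^{-1}(\mu_2)}$, a product of two extreme weight vectors. Its weight, restricted to the diagonal ${\mk h}$, is $w_1^{-1}(\mu_1)+w_2^{-1}(\mu_2)=\lambda+\langle\Phi_{w_1}\rangle+\langle\Phi_{w_2}\rangle$, which by (iii) equals $\lambda+\langle\Phi_w\rangle=w^{-1}(w\cdot\lambda)$; so the dominant representative of this weight in its ${\mc W}$-orbit is exactly $\mu:=w\cdot\lambda$. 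Thus condition (ii) of Theorem~\ref{Theo Necessity} says nothing more than: $V\big(\overline{w_1^{-1}(\mu_1)+w_2^{-1}(\mu_2)}\big)$ is contained in the $G$-submodule of $V(\mu_1)\otimes V(\mu_2)$ cyclically generated by $v^{w_1^{-1}(\mu_1)}\otimes v^{w_2^{-1}(\mu_2)}$. After conjugating by a representative of $w_1$ in $N_G(H)$ one may assume the first factor contributes its highest weight vector, and the assertion becomes exactly the Kumar--Mathieu theorem (the PRV conjecture, in the refined form locating the PRV component inside the cyclic span of a product of extreme weight vectors). Invoking it completes the proof.

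The bookkeeping in the first two steps is routine; the one genuinely non-elementary point is the last, where the Kumar--Mathieu theorem is essential and cannot be circumvented. This is precisely where the diagonal case is special: condition (ii) of Theorem~\ref{Theo Necessity} becomes automatic, whereas in general it is not, so I expect the main obstacle to be simply the correct invocation of that theorem in the cyclic-module form rather than any new difficulty.
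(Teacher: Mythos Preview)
Your proposal is correct and follows essentially the same route as the paper: reduce to Theorem~\ref{Theo Necessity}, note that (ii)~$\Leftrightarrow$~(iii) is a direct computation of the wedge product, and then verify condition~(ii) of Theorem~\ref{Theo Necessity} by invoking the Kumar--Mathieu theorem with $\mu=w_1\cdot\lambda_1$, $\nu=w_2\cdot\lambda_2$, $\sigma=w_1w_2^{-1}$, after conjugating the cyclic generator by $w_1$. The only differences are cosmetic: you spell out the wedge-product argument that the paper calls ``obvious'', and you note the harmless $\pm1$ scalar (statement~(ii) here should strictly read ``up to a nonzero scalar'' to match condition~(i) of Theorem~\ref{Theo Necessity}, but this does not affect the logic).
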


\begin{proof} The equivalence of (ii) and (iii) is obvious. The implication (i)$\Lw$(ii) was established in the general case in Theorem \ref{Theo Necessity}. To show that (iii) implies (i), it suffices to show that condition (ii) of Theorem \ref{Theo Necessity} is also satisfied. We invoke Theorem 2.10 in \cite{Kumar-PRV}, which states that for any $\mu,\nu\in{\mc P}^+$ and $\sigma\in{\mc W}$, the ${\mk g}$-module $V$ with extreme weight $\mu+\sigma(\nu)$ occurs with multiplicity exactly one in the submodule ${\mk U}(\mk g)(v^{\mu}\otimes v^{\sigma(\nu)})$ of $V(\mu)\otimes V(\nu)$. Now take $\mu=w_1\cdot\lambda_1$, $\nu=w_2\cdot\lambda_2$, and $\sigma=w_1w_2^{-1}$. Then $V=V(w\cdot\lambda)$ and this module has $w^{-1}(w\cdot\lambda)$ as an extreme weight. Using condition (iii) we get
\begin{gather*}
w^{-1}(w\cdot\lambda)=\lambda+\rho-w^{-1}(\rho)=\lambda+\langle\Phi_w\rangle=\lambda_1+\lambda_2+\langle\Phi_{w_1}\rangle+\langle\Phi_{w_2}\rangle\\
=w_1^{-1}(w_1\cdot\lambda_1)+w_2^{-1}(w_2\cdot\lambda_2) =w_1^{-1}(\mu)+w_2^{-1}(\nu)\;.
\end{gather*}
We have ${\mk U}(\mk g)(v^{\mu}\otimes v^{\sigma(\nu)})={\mk U}(\mk g)(v^{w_1^{-1}(\mu)}\otimes v^{w_2^{-1}(\nu)})$ since the diagonal action of the Weyl group is compatible with the ${\mk g}$-module structure of $V(\mu)\otimes V(\nu)$. Hence $V(w\cdot\lambda)\subset{\mk U}(\mk g)(v^{w_1^{-1}(\mu)}\otimes v^{w_2^{-1}(\nu)})$. This completes the proof.
\end{proof}

Dimitrov and Roth studied various properties of cohomological components for diagonal embeddings. In particular, they answered problem III and proposed a conjectural answer to problem II, supported by a proof for classical groups as well as for generic weights of arbitrary semisimple groups. (For the missing definitions see \cite{Dimitrov-Roth long}):

\noindent (II) Conjecture of Dimitrov and Roth: The cohomological components of $V(\mu_1)\otimes V(\mu_2)$ are exactly the generalized PRV components of stable multiplicity one.

\noindent (III) The set ${\mc C}'=\{(\mu,\mu_1,\mu_2)\in({\mc P}^+)^3: V(\mu)^*\subset V(\mu_1)\otimes V(\mu_2)\; {\rm cohomological}\}$ equals the union of all regular walls of codimension $\ell=rank{\mk g}$ of the Littlewood-Richardson cone.

Here a regular wall means a wall intersecting the interior of the tripple dominant chamber. The regular walls of codimension $\ell$ wall are exactly the monoids ${\mc C}'_{w,w_1,w_2}$ for $w,w_1,w_2$ satisfying (iii), defined analogously to our ${\mc C}_{w,\wt{w}}$. (Notice that there is one dualization distinguishing our ${\mc C}$ from ${\mc C}'$.)

\subsection{Homogeneous rational curves}\label{Sec RationalCurves}

In this section, we take ${\mk g}=\mk{sl}_2$ included in a semisimple Lie algebra $\wt{\mk g}$ and study the resulting equivariant embeddings of $X=\PP^1$ into a higher dimensional flag manifold $\wt{X}$. First we consider the special case when ${\mk g}$ is a principal three-dimensional subalgebra of $\wt{\mk g}$ (the definition is give below). Then we use this case and the results on regular embeddings from Section \ref{Sec Regular} to deal with a more general situation.

\subsubsection{Principal rational curves}

We start by recalling, following Kostant \cite{Kostant-PTDS}, the definition as some basic properties of principal three dimensional subalgebras of semisimple Lie algebras. After that, we study the resulting embeddings of flag manifolds and give a complete classification of the cohomological components of modules.

The nilpotent elements in the semisimple complex Lie algebra $\wt{\mk g}$ are divided into conjugacy classes under the action of the adjoint group. The largest class is the one of principal nilpotent elements, which is characterized by the property that each representative $\varepsilon$ is contained in a unique maximal subalgebra $\wt{\mk n}$ consisting of nilpotent elements, $\varepsilon\in\wt{\mk n}\subset\wt{\mk g}$. The normalizer of $\wt{\mk n}$ is a Borel subalgebra $\wt{\mk b}$ of $\wt{\mk g}$. Let $\wt{\mk h}\subset\wt{\mk b}$ be any Cartan subalgebra, to which we associate roots and root vectors as usual. Let $\wt{\alpha}_1,...,\wt{\alpha}_\ell$ be the simple roots associated to $\wt{\mk b}$. Then the principal nilpotent elements contained in $\wt{\mk n}$ are alternatively characterized by the fact that in the expression
$$
\varepsilon=\sum\limits_{\wt{\alpha}\in\wt{\Delta}^+} c_{\wt{\alpha}} \wt{e}_{\wt{\alpha}}
$$
one has $c_{\wt{\alpha}_j}\ne 0$ for all $j=1,2,...,\ell$. (In $\mk{sl}_{\ell+1}$ the principal nilpotent elements are those, whose Jordan form consists of a single Jordan cell with zero eigenvalue.)

A principal three-dimensional subalgebra ${\mk g}$ of $\wt{\mk g}$ is a simple three-dimensional subalgebra containing a principal nilpotent element. The semisimple elements of such a subalgebra are regular, in the sense that their centralizers are Cartan subalgebras. Thus a Cartan subalgebra ${\mk h}$ of ${\mk g}$ is contained in a unique Cartan subalgebra $\wt{\mk h}$ of $\wt{\mk g}$. It follows that any given Borel subalgebra ${\mk b}\subset{\mk g}$ is contained in a unique Borel subalgebra $\wt{\mk b}\subset\wt{\mk g}$. Since $G/B\cong\PP^1$, we deduce that a given principal ${\mk g}$ in $\wt{\mk g}$ determines a unique, equivariantly embedded, rational curve in the flag manifold $\wt{X}$ of $\wt{G}$,
$$
\varphi:\PP^1 \hookrightarrow \wt{X} \;.
$$
We call such a curve a principal rational curve in $\wt{X}$.

Let $\alpha$ denote the positive root of ${\mk g}$, and let $\wt{\alpha}_1,...,\wt{\alpha}_\ell$ denote the simple roots of $\wt{\mk g}$. Let $\wt{\omega}_1,...,\wt{\omega}_\ell$ be the fundamental weights, so that $2\wt{\kappa}(\wt{\alpha}_j,\wt{\omega}_k)/\wt{\kappa}(\wt{\alpha}_j,\wt{\alpha}_j)=\delta_{j,k}$.

We want to determine the possible cohomological components of $\wt{\mk g}$-modules obtained via this embedding. The line bundles on $\PP^1$ carry nonzero cohomology in degree at most 1. The pullbacks in degree 0 never vanish (provided the domain is non-zero) and give rise to cohomological components which are exactly the highest components. From now on, we restrict our considerations to line bundles whose cohomology appears in degree 1.

Let $\wt{\lambda}\in\wt{\mc P}$ be a weight of length 1, put $\lambda=\iota^*(\wt{\lambda})$, and consider the pullback
$$
\pi^{\wt{\lambda}} : H^1(\wt{X},\wt{\mc O}(\wt{\lambda})) \lw H^1(\PP^1,{\mc O}(\lambda)) \;.
$$
The Weyl group element $\wt{w}_{\wt{\lambda}}$ is one of the simple reflections in $\wt{\mc W}$. According to the interpretation of the map $\pi^{\wt{\lambda}}$ in terms of Lie algebra cohomology, we need to study the map
$$
\varphi_o^*:\wt{\mk n}^* \lw {\mk n}^* \;.
$$
Since ${\mk n}=\CC e_{\alpha}$, and the element $e_{\alpha}$ is principal nilpotent, we have a nonzero image
$$
\varphi_o^*(\wt{e}^*_{-\wt{\alpha}_j}) = a_{\wt{\alpha}_j} e^*_{-\alpha} \;, j=1,2,...,\ell \;.
$$
Thus condition (i) from Theorem \ref{Theo Necessity} is always satisfied. This implies in particular that
$$
\iota^*(\wt{\alpha}_j)=\alpha=2 \quad,\quad j=1,2,...,\ell \;.
$$
Hence the restriction of weights $\iota^*:\wt{\mc P}\lw{\mc P}$ is easily computable.
In particular, we can deduce that all fundamental weights of $\wt{\mk g}$ are mapped to strictly positive numbers in ${\mc P}=\ZZ$. This fact already follows from the regularity of $h_{\alpha}$, but now we can be more precise.

\begin{lemma}\label{Lem Princ-FundWeights}
Let $\wt{\alpha}_j$ be a simple root of a simple ideal ${\mk g}_1$ of $\wt{\mk g}$, and let $\wt{\omega}_j$ be the corresponding fundamental weight. Then one of following possibilities occurs:

{\rm (i)} If ${\mk g}_1\cong \mk{sl}_2$, then $\iota^*(\wt{\omega}_j)=1$.

{\rm (ii)} If ${\mk g}_1\cong \mk{sl}_3$, then $\iota^*(\wt{\omega}_j)=2$.

{\rm (iii)} If ${\mk g}_1$ is not isomorphic to $\mk{sl}_2$ or $\mk{sl}_3$, then $\iota^*(\wt{\omega}_j)\geq 3$.
\end{lemma}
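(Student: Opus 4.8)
The plan is to evaluate $\iota^*(\wt\omega_j)$ directly on the coroot of $\mk{sl}_2$. Under the identification $\mc P\cong\ZZ$ used in the text (the one sending $\alpha$ to $2$), the integer $\iota^*(\wt\omega_j)$ equals $\wt\omega_j(\iota(h_\alpha))$, where $h_\alpha$ is the coroot of $\mk g=\mk{sl}_2$. It was already observed, just before the lemma, that $\wt\alpha_i(\iota(h_\alpha))=2$ for every simple root $\wt\alpha_i$ of $\wt{\mk g}$; hence $\iota(h_\alpha)$ is the unique element of $\wt{\mk h}$ on which every simple root takes the value $2$. By Kostant's description of the characteristic element of a principal three-dimensional subalgebra, this element is the sum of all positive coroots of $\wt{\mk g}$, namely $\sum_{\wt\alpha\in\wt\Delta^+}\wt h_{\wt\alpha}$. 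Since the fundamental weight $\wt\omega_j$ vanishes on all simple coroots outside the ideal $\mk g_1$, only the coroots coming from the root subsystem $\wt\Delta_1\subset\wt\Delta$ of $\mk g_1$ contribute, and, using $\wt\omega_j(\wt h_{\wt\alpha_k})=\delta_{jk}$, one gets
$$
\iota^*(\wt\omega_j)=\sum_{\wt\alpha\in\wt\Delta_1^+}\wt\omega_j(\wt h_{\wt\alpha})=c_j,
$$
where $c_j$ is the coefficient of the simple coroot $\wt h_{\wt\alpha_j}$ in $\sum_{\wt\alpha\in\wt\Delta_1^+}\wt h_{\wt\alpha}$, expressed in the basis of simple coroots of the dual root system $\wt\Delta_1^\vee$ (again irreducible, of the same rank as $\mk g_1$).

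It then remains to read off $c_j$ from the type of $\mk g_1$. If $\mk g_1\cong\mk{sl}_2$ there is a single positive coroot, so $c_j=1$ and (i) follows. If $\mk g_1\cong\mk{sl}_3$ the positive coroots of $\wt\Delta_1^\vee$ are $\wt h_{\wt\alpha_j}$, the simple coroot of the other node, and their sum, so $c_j=1+0+1=2$ and (ii) follows. In the remaining cases $\wt\Delta_1^\vee$ is irreducible of rank $\geq2$ and not of type $A_2$, and I would show $c_j\geq3$ by producing three distinct positive coroots each having $\wt h_{\wt\alpha_j}$-coefficient $1$: the simple coroot $\gamma:=\wt h_{\wt\alpha_j}$ itself; the coroot $\gamma+\gamma'$ for a neighbour $\gamma'$ of $\gamma$ in the Dynkin diagram; and either $\gamma+\gamma''$ for a second neighbour $\gamma''$ of $\gamma$, if one exists, or else $\gamma+\gamma'+\gamma''$ for a further neighbour $\gamma''$ of $\gamma'$, which exists whenever the rank is $\geq3$ and $\gamma$ is an endpoint of the diagram. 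Each of these is a positive root of $\wt\Delta_1^\vee$ by elementary root-system combinatorics (the sum of two adjacent simple roots is a root, applied inductively along the diagram), and the remaining positive coroots contribute nonnegatively, so $c_j\geq3$. The only configurations not covered this way are the rank-two components with $\wt\Delta_1^\vee$ of type $B_2$ or $G_2$; these I would settle by listing the four, respectively six, positive coroots and reading off $c_j\in\{3,4\}$, respectively $c_j\in\{6,10\}$.

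The substantive content is entirely in the first paragraph: the identification $\iota^*(\wt\omega_j)=c_j$ through Kostant's characteristic element together with the already-established equalities $\iota^*(\wt\alpha_i)=2$. The only point requiring care in the second paragraph is that the combinatorics must be carried out inside the dual root system $\wt\Delta_1^\vee$, not inside $\wt\Delta_1$, because in the non-simply-laced cases the coroot of a root need not have the same coefficients as the root; counting with coroots from the outset circumvents this. No input beyond what is recalled in the preceding text is needed.
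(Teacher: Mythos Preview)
Your argument is correct, and it is genuinely different from the paper's proof, which consists of the single sentence ``The result is obtained by direct computations using the tables at the end of \cite{Bourbaki-Lie-2}.'' You instead identify $\iota^*(\wt\omega_j)$ with the $j$-th coefficient of $2\wt\rho^\vee=\sum_{\wt\alpha\in\wt\Delta^+}\wt\alpha^\vee$ in the basis of simple coroots, via Kostant's characteristic element, and then bound this coefficient by a short combinatorial argument on the dual Dynkin diagram. This is more conceptual and essentially self-contained; it also explains \emph{why} the threshold is exactly $3$ (three positive coroots through a given node once the diagram is large enough), whereas the table lookup merely verifies it. One small point of notation: you use $\wt h_{\wt\alpha}$ as the standard coroot satisfying $\wt\omega_j(\wt h_{\wt\alpha_k})=\delta_{jk}$, which is not quite the Killing-form normalization the paper fixes in Section~\ref{Sec Cartan and Borel}; this is harmless, since your argument really only uses the element of $\wt{\mk h}$ on which every simple root evaluates to $2$, and that element is unambiguous.
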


\begin{proof}
The result is obtained by direct computations using the tables at the end of \cite{Bourbaki-Lie-2}.
\end{proof}

We can now deduce the following theorem, which solves problems I, II, III for principal rational curves.

\begin{theorem}\label{Theo principal}
Let $\wt{\lambda}\in\wt{\mc P}$ be a weight of length 1, with $\wt{w}_{\wt{\lambda}}=\wt{s}_{\wt{\alpha}_j}$. Then $\wt{s}_{\wt{\alpha}_j}\cdot\wt{\lambda}=\sum_{k=1}^\ell c_k\wt{\omega}_k$ with $c_k\geq 0$. Let ${\mk g}_1$ be the simple ideal of $\wt{\mk g}$ to whose root system $\wt{\alpha}_j$ belongs. Let $\lambda=\iota^*(\wt{\lambda})$. Then $\pi^{\wt{\lambda}}:H^1(\wt{X},\wt{\mc O}(\wt{\lambda}))\rw H^1(\PP^1,{\mc O}(\lambda))$ is a nonzero map if and only if one of the following alternatives holds:

{\rm (i)} ${\mk g}_1\cong \mk{sl}_2$ and $c_j\geq \sum_{k\ne j} c_k\iota^*(\wt{\omega}_k)$. In this case we have a cohomological component $V(-\lambda-2)\subset \wt{V}(\wt{s}_{\wt{\alpha}_j}\cdot\wt{\lambda})$ generated by a highest weight vector; $\lambda$ can take any value in $\ZZ_{\leq-2}$ provided the coefficient $c_j$ is sufficiently large.

{\rm (ii)} ${\mk g}_1\cong \mk{sl}_3$ and $\wt{\lambda}=\wt{s}_{\wt{\alpha}_j}\cdot (m\wt{\omega}_j)$ where $m$ is an even positive integer. In this case $\lambda=-2$ and the cohomological component is the one dimensional space of $G$-invariants in $\wt{V}(m\wt{\omega}_j)$.

{\rm (iii)} $\wt{\lambda}=-\wt{\alpha}_j$. In this case $\lambda=-2$ and $\pi^{\wt{\lambda}}$ is a cohomological morphism from the trivial $\wt{G}$-module to the trivial $G$-module.
\end{theorem}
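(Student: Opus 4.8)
The plan is to apply Theorem \ref{Theo Necessity}. Condition (i) will be free here: since $\mk n=\CC e_\alpha$ with $e_\alpha$ principal nilpotent, one already has $\varphi_o^*(\wt e^*_{-\wt\alpha_j})=a_{\wt\alpha_j}e^*_{-\alpha}$ with $a_{\wt\alpha_j}\ne0$ for all $j$, whence $\iota^*(\wt\alpha_j)=\alpha$, and $w=w_\lambda=s_\alpha$. So the whole argument reduces to condition (ii): whether the principal $\mk{sl}_2$-submodule $M:=\mk U(\mk g)\,\wt v^{\,\wt\lambda+\wt\alpha_j}$ of $\wt V(\wt\mu)$, with $\wt\mu:=\wt s_{\wt\alpha_j}\cdot\wt\lambda=\sum_k c_k\wt\omega_k$ and $\wt v^{\,\wt\lambda+\wt\alpha_j}=\wt v^{\,\wt s_{\wt\alpha_j}(\wt\mu)}$, contains a copy of $V(w\cdot\lambda)$. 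My first step is to pin down which $\wt\mu$ can occur. Using $\iota^*(\wt\alpha_j)=2$ one computes $\lambda=\iota^*(\wt\mu)-2(c_j+1)=\sum_{k\ne j}c_k\iota^*(\wt\omega_k)+c_j(\iota^*(\wt\omega_j)-2)-2$, and the first obvious necessary condition $l(\lambda)=\wt l(\wt\lambda)=1$ reads $\lambda\le-2$. Since $\iota^*(\wt\omega_k)\ge1$ for all $k$ (Lemma \ref{Lem Princ-FundWeights}), splitting by the three cases of that lemma for the simple ideal $\mk g_1\ni\wt\alpha_j$ shows: if $\mk g_1\cong\mk{sl}_2$ then $\lambda\le-2\Leftrightarrow c_j\ge\sum_{k\ne j}c_k\iota^*(\wt\omega_k)$, the inequality of alternative (i); if $\mk g_1\cong\mk{sl}_3$ then $\lambda\le-2$ forces $\wt\mu=m\wt\omega_j$ with $m=c_j$ and $\lambda=-2$; and otherwise $\lambda\le-2$ forces $\wt\mu=0$, hence $\wt\lambda=\wt s_{\wt\alpha_j}\cdot0=-\wt\alpha_j$ and $\lambda=-2$.

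I would then dispatch the last two cases directly. For $\wt\mu=0$ one has $H(\wt X,\wt{\mc O}(\wt\lambda))\cong\CC$ and $\wt\lambda=\wt w^{-1}\cdot0$, so Corollary \ref{Coro Triv modules} gives $\pi^{\wt\lambda}\ne0$ (condition (i) holding) with $\lambda=-2$: this is alternative (iii). For $\mk g_1\cong\mk{sl}_3$ and $\wt\mu=m\wt\omega_j$, $m>0$, one has $w\cdot\lambda=s_\alpha\cdot(-2)=0$, so condition (ii) asks whether $M$ contains the trivial $\mk{sl}_2$-module. I would use that $\wt V(m\wt\omega_j)={\rm Sym}^m(\CC^3)$, that the principal $\mk{sl}_2$ of $\mk{sl}_3$ acts on $\CC^3$ as the irreducible $V(2)$, and that $\wt v^{\,\wt s_{\wt\alpha_j}(m\wt\omega_j)}$ is, up to scalar, the monomial $y^m$ in the weight-$0$ basis vector $y$ of $\CC^3=V(2)$. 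As ${\rm Sym}^mV(2)$ is multiplicity free and contains the trivial module precisely when $m$ is even --- the invariant being a power of the discriminant $\Delta=y^2-{\rm const}\cdot xz$ --- condition (ii) can hold only for $m$ even; and for $m$ even it does hold, because distinct monomials of ${\rm Sym}^m(\CC^3)$ carry distinct $\mk{sl}_3$-weights and hence are orthogonal for an $SU(3)$-invariant (a fortiori $SU(2)$-invariant) Hermitian form, so $\langle y^m,\Delta^{m/2}\rangle=\norm{y^m}^2\ne0$ and $y^m$, hence $M$, has a nonzero invariant component. This is alternative (ii), the cohomological component being the invariant line of $\wt V(m\wt\omega_j)$.

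For the $\mk{sl}_2$ case, $\wt\alpha_j$ is the simple root of an ideal $\mk g_1\cong\mk{sl}_2$, so the principal $\mk{sl}_2$ restricts to an isomorphism onto $\mk g_1$; writing $\wt X=\wt X_1\times\wt X_2$ with $\wt X_1\cong\PP^1$ the flag manifold of $\mk g_1$, the first component of $\varphi$ is an isomorphism onto $\wt X_1$ and $\wt w=\wt s_{\wt\alpha_j}$ acts only on $\wt X_1$. By K\"unneth I would factor $\pi^{\wt\lambda}$ through the isomorphism $H^1(\wt X_1,\cdot)\cong H^1(\PP^1,\cdot)$ tensored with the degree-$0$ pullback $H^0(\wt X_2,\cdot)\rw H^0(\PP^1,\cdot)$, composed with the cup product $H^1(\PP^1,{\mc O}(\lambda'))\otimes H^0(\PP^1,{\mc O}(\lambda''))\rw H^1(\PP^1,{\mc O}(\lambda))$, where $\lambda'=-c_j-2\le-2$, $\lambda''=\sum_{k\ne j}c_k\iota^*(\wt\omega_k)\ge0$ and $\lambda'+\lambda''=\lambda$. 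The degree-$0$ pullback is nonzero and (onto an irreducible module) surjective, cf.\ Remark \ref{Zab H^0->H^0}, the tensored factor is an isomorphism, and the cup product on $\PP^1$ is surjective whenever all three groups are nonzero, being by Serre duality transpose to a surjective multiplication of spaces of binary forms. Hence $\pi^{\wt\lambda}\ne0\Leftrightarrow H^1(\PP^1,{\mc O}(\lambda))\ne0\Leftrightarrow\lambda\le-2$, the inequality of alternative (i), and letting $c_j$ grow realizes every $\lambda\in\ZZ_{\le-2}$; the component is $V(-\lambda-2)=V(w\cdot\lambda)$, the highest one on the $\wt X_2$-factor (Remark \ref{Zab H^0->H^0}). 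Alternatively, condition (ii) of Theorem \ref{Theo Necessity} follows from the Kumar--Mathieu theorem used in Section \ref{Sec Diagonal}, applied with the lowest weight vector of the $\mk g_1$-factor of $\wt V(\wt\mu)$. Since conversely $\pi^{\wt\lambda}\ne0$ forces $l(\lambda)=1$, hence the inequality, collecting the three cases yields the equivalence.

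The main obstacle I anticipate is condition (ii) in the $\mk{sl}_3$-alternative: one must show the specific extreme weight vector $\wt v^{\,\wt s_{\wt\alpha_j}(m\wt\omega_j)}=y^m$ is not absorbed into the nontrivial isotypic summands of ${\rm Sym}^mV(2)$ but genuinely has nonzero invariant part; the weight-separation and orthogonality argument above is what I would rely on, and it is also the reason the final list is so short. By contrast condition (i) is free throughout, and the $\mk{sl}_2$ and $\wt\mu=0$ cases become routine once the K\"unneth splitting and Corollary \ref{Coro Triv modules} are in place.
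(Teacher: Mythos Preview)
Your proof is correct and follows essentially the same architecture as the paper's: apply Theorem~\ref{Theo Necessity}, observe that condition~(i) is automatic for a principal $\mk{sl}_2$, compute $\lambda$ in terms of the $c_k$ and $\iota^*(\wt\omega_k)$, and split into three cases via Lemma~\ref{Lem Princ-FundWeights}. For alternative~(iii) you both invoke Corollary~\ref{Coro Triv modules}; for alternative~(ii) your orthogonality argument (distinct monomials in ${\rm Sym}^m(\CC^3)$ have distinct $\mk{sl}_3$-weights, hence $\langle y^m,\Delta^{m/2}\rangle\ne0$) is exactly the paper's observation that $\kappa(h_\alpha)\ne0$ forces $\kappa^{m/2}(h_\alpha^m)\ne0$, just written in the Hermitian rather than the polynomial-evaluation picture.

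The one genuine difference is alternative~(i). The paper dispatches this case by a one-line appeal to Theorem~\ref{Theo Regular embedding}, whereas you give an explicit K\"unneth factorisation $\wt X=\wt X_1\times\wt X_2$ and reduce $\pi^{\wt\lambda}$ to a composition of an isomorphism, a degree-$0$ pullback, and a cup product on $\PP^1$. Your route is more transparent: the principal $\mk{sl}_2$ is \emph{not} a regular subalgebra of $\wt{\mk g}$ in the sense of Section~\ref{Sec Regular} (its Cartan is not contained in a single factor of $\wt{\mk h}$), so the paper's invocation of Theorem~\ref{Theo Regular embedding} is really shorthand for an argument like yours. Your Serre-duality check that the cup product $H^1(\PP^1,{\mc O}(\lambda'))\otimes H^0(\PP^1,{\mc O}(\lambda''))\to H^1(\PP^1,{\mc O}(\lambda))$ is surjective is correct (equivalently, the vector $v^{-c_j}\otimes v^{\lambda''}$ generates all of $V(c_j)\otimes V(\lambda'')$ as an $\mk{sl}_2$-module, which one sees directly by applying powers of $e$ and $f$). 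The alternative appeal to Kumar--Mathieu you mention would also work.
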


\begin{proof}
We need to check when $\lambda$ has length 1 (i.e. $\lambda\leq -2$). If it does, then, as noted above, condition (i) of Theorem \ref{Theo Necessity} is always satisfied and it remains to check condition (ii). We have
\begin{gather}\label{For Comput in princ-sl_2}
\begin{array}{rl}
\lambda & = \iota^*(\wt{\lambda}) \\
        & = \iota^*(\wt{s}_{\wt{\alpha}_j}\cdot\sum_k c_k\wt{\omega}_k) \\
        & = \iota^*(\wt{s}_{\wt{\alpha}_j}(\sum_k c_k\wt{\omega}_k)-\wt{\alpha}_j) \\
        & = \iota^*(\sum_k c_k\wt{\omega}_k-a_j\wt{\alpha}_j-\wt{\alpha}_j) \\
        & = \sum_k c_k\iota^*(\wt{\omega}_k)-(a_j+1)2 \;.
\end{array}
\end{gather}
It follows that $\lambda\leq-2$ if and only if $c_j(\iota^*(\wt{\omega}_j)-2)+\sum_{k\ne j} c_k\iota^*(\wt{\omega}_j) \leq 0$. Now we refer to Lemma \ref{Lem Princ-FundWeights}, and we get three possible cases.

First, if ${\mk g}_1\cong\mk{sl}_2$, then part (i) of the lemma applies and we can invoke Theorem \ref{Theo Regular embedding} to conclude that part (i) of the present theorem holds.

Second, if ${\mk g}_1\cong\mk{sl}_3$, then we see that $\lambda\leq-2$ if and only if $c_k=0$ for $k\ne j$. So we must have $\wt{\lambda}=\wt{s}_{\wt{\alpha}_j}\cdot (m\wt{\omega}_j)$. Then we get $\lambda=-2$ and $H^1(\PP^1,{\mc O}(\lambda))=\CC$. Notice that the principal three-dimensional subalgebra of $\mk{sl}_3$ is in fact the image of $\mk{sl}_2$ under the adjoint representation. Depending on the way in which the fundamental weights are ordered, we get that $H^1(\wt{X},{\mc O}(\wt{\lambda}))$ is isomorphic to either $S^m(\mk{sl}_2)$ or $S^m(\mk{sl}_2^*)$. The cohomological component, if it exists, is contained in the invariants of $S^m(\mk{sl}_2)$ or $S^m(\mk{sl}_2^*)$. On the other hand, the $\mk{sl}_2$-invariants in $S^{\cdot}(\mk{sl}_2)$ are generated by the Killing form, and hence occur only in even degrees. (Respectively, in $S^{\cdot}(\mk{sl}_2)$ the invariants are generated by the symmetric tensor associated to the Casimir element.) In particular, if $m$ is odd, there are no invariants and hence the pullback $\pi^{\wt{\lambda}}$ vanishes. To prove that the map is surjective for $m$ even, we need to show that the invariants are contained in the $\mk{sl}_2$-submodule generated by an appropriate extreme weight vector, as required by condition (ii) of Theorem \ref{Theo Necessity}. This can be done directly, by an elementary but somewhat tedious computation, which we shall not reproduce here. Instead one could use the following simple argument, given here for $H^1(\wt{X},{\mc O}(\wt{\lambda}))\cong S^{m}(\mk{sl}_2^*)$, the other case being analogous. The invariants in $S^{m}(\mk{sl}_2^*)$ are spanned by $\kappa^{m/2}$, where $\kappa$ is the Killing form. The extreme weight vector in $S^{m}(\mk{sl}_2)$ taking part in condition (ii) of Theorem \ref{Theo Necessity} is $\wt{v}^{\wt{s}_{\wt{\alpha}_j(m\wt{\omega}_j)}}=h_{\alpha}^m$. Since $\kappa(h_\alpha)\ne 0$ (here $\kappa$ is regarded as a quadratic form) we see that
$$
\varpi^{\wt{s}_{\wt{\alpha}_j}\cdot (m\wt{\omega}_j)}[\wt{e}^*_{-\alpha_j}\otimes (\kappa^{m/2}\otimes h_{\alpha}^m) \otimes\wt{z}] \ne 0 \;,
$$
which implies part (ii) of the present theorem.

Statement (iii) follows directly from Corollary \ref{Coro Triv modules}.

To see that parts (i), (ii) and (iii) account for all possibilities for a nonzero pullback, observe that if ${\mk g}_1$ is neither $\mk{sl}_2$ nor $\mk{sl}_3$, then part (iii) of Lemma \ref{Lem Princ-FundWeights} and the computation (\ref{For Comput in princ-sl_2}) imply that $\lambda$ has length 1 if and only if $c_k=0$ for all $k$. In this case $\wt{\lambda}=-\wt{\alpha}_j$ and $\lambda=-2$, so we are in case (iii) of the theorem.
\end{proof}

\subsubsection{More rational curves}

Combining the results from the preceding sections we can study more general embeddings. Suppose that the homogeneous embedding $\varphi:\PP^1\hookrightarrow\wt{G}/\wt{B}$ factors as the composition of two embeddings $\varphi_1:\PP^1\hookrightarrow G_1/B_1$ and $\varphi_2:G_1/B_1\hookrightarrow\wt{G}/\wt{B}$, so that $\varphi_1$ is a principal embedding and $\varphi_2$ is a regular embedding. Then we can apply Theorem \ref{Theo Regular embedding} to $\varphi_2$ and Theorem \ref{Theo principal} to $\varphi_1$ in order to find the weights $\wt{\lambda}\in\wt{\mc P}$ for which the pullback $\pi^{\wt{\lambda}}:H(\wt{G}/\wt{B},{\mc O}(\wt{\lambda}))\rw H(\PP^1,{\mc O}(\iota^*\wt{\lambda}))$ is nonzero. The above assumption is satisfied if and only if the following property holds:\\

\noindent{\bf Property (R)}: {\it A three dimensional simple subalgebra ${\mk g}$ of a semisimple Lie algebra $\wt{\mk g}$ will be said to satisfy the property {\rm (R)} if there exists a proper semisimple regular subalgebra ${\mk g}_1\subset\wt{\mk g}$, containing ${\mk g}$ as a principal three dimensional subalgebra.}\\

The three dimensional simple subalgebras of a semisimple Lie algebra were classified in a classical work of Dynkin, \cite{Dynkin-SS}. The results in chapter III therein show that (R) holds for any three dimensional simple subalgebra of $\wt{\mk g}$, if $\wt{\mk g}$ is of type $A_\ell,B_\ell,C_\ell,G_2,F_4$ or is a sum of simple algebras of these types. The property (R) is not always satisfied if $\wt{\mk g}$ has simple summands of type $D_\ell$ or $E_\ell$. The exceptions are explicitly classified in \cite{Dynkin-SS}. We refrain from further study of these exceptions. Instead, we present examples where the factorization method applies. In both examples below, we take the same inclusion $\iota:\mk{sl}_2\hookrightarrow\mk{sl}_4$, satisfying the property (R). What differs in the two examples is the choice of Borel subalgebras ${\mk b}\subset\wt{\mk b}$. Thus we get two different embeddings $\PP^1\hookrightarrow SL_4/\wt{B}$, which both factor as desired above. We observe that the respective sets of cohomological components differ substantially. This illustrates on a simple example the dependence of the set of cohomological components on the choice of Borel subalgebras.

\begin{ex} Let ${\mk g}=\mk{sl}_2$, $\wt{\mk g}=\mk{sl}_4$ and $\iota:{\mk g}\hookrightarrow\wt{\mk g}$ be the inclusion given by
\begin{gather*}
\left[\begin{array}{cc} a & b \\ c & d \end{array}\right] \;\stackrel{\iota}{\lo}\;
\left[\begin{array}{cccc} a & b & 0 & 0 \\ c & d & 0 & 0 \\ 0 & 0 & a & b \\ 0 & 0 & c & d \end{array}\right] \;.
\end{gather*}
Let ${\mk h}\subset\mk{sl}_2$ and $\wt{\mk h}\subset\mk{sl}_4$ be the diagonal Cartan subalgebras. It is clear that ${\mk g}_1=\mk{sl}_2\oplus\mk{sl}_2$ is block-diagonally included in $\mk{sl}_4$. Now, we have a diagonal inclusion $\iota_1:{\mk g}\hookrightarrow{\mk g}_1$ and a regular inclusion $\iota_2:{\mk g}_1\hookrightarrow\wt{\mk g}$, so that $\iota=\iota_2\circ\iota_1$.

The restriction of weights is
$$
(\wt{a}_1,\wt{a}_2,\wt{a}_3) \stackrel{\iota_2^*}{\lw} (\wt{a}_1,\wt{a}_3) \stackrel{\iota_1^*}{\lw} \wt{a}_1+\wt{a}_3\;.
$$
(Here the weights are written in coordinates corresponding to the respective bases of fundamental weights.) Consider the upper triangular Borel subalgebra ${\mk b}$ of $\mk{sl}_2$, and as extension take the upper triangular Borel subalgebra $\wt{\mk b}$ of $\wt{sl}_4$. Put ${\mk b}_1={\mk g}_1\cap\wt{\mk b}$. This defines embeddings
$$
\PP^1 \stackrel{\varphi_1}{\hookrightarrow} \PP^1\times\PP^1 \stackrel{\varphi_2}{\hookrightarrow} SL_4/\wt{B} \;.
$$
Let $\wt{\alpha}_1$, $\wt{\alpha}_2$, $\wt{\alpha}_3$ denote the simple roots of $\wt{\mk g}$ with respect to $\wt{\mk b}$, let $\alpha_1,\alpha_2$ denote the simple roots of ${\mk g}_1$ with respect to ${\mk b}_1$, let $\alpha$ denote the simple root of ${\mk g}$ with respect to ${\mk b}$.

We are interested in pullbacks in cohomological degree 1. The relevant inversion sets are singletons of simple roots. Since we are concerned with a regular and a diagonal embedding, we can refer to Theorems \ref{Theo Regular embedding} and \ref{Theo diagonal} respectively, and it is sufficient to consider the restrictions of roots rather than the associated elements $\wt{e}^*_{-\wt{\alpha}}$. We have
$$
\wt{\alpha}_1 \stackrel{\iota_2^*}{\lw} \alpha_1 \stackrel{\iota_1^*}{\lw} \alpha \quad,\quad \iota^*(\wt{\alpha}_2) = 0 \quad,\quad \wt{\alpha}_3 \stackrel{\iota_2^*}{\lw} \alpha_2 \stackrel{\iota_1^*}{\lw} \alpha \;.
$$
Assume $\wt{\lambda}=(\wt{a}_1,\wt{a}_2,\wt{a}_3)\in\wt{\mc P}$ has length 1. Put $\lambda_1=\iota_2^*(\wt{\lambda})$ and $\lambda=\iota^*(\lambda)=\iota_1^*(\lambda_1)$. We have $\pi^{\wt{\lambda}}=\pi_2^{\wt{\lambda}}\circ\pi_1^{\lambda_1}$. Theorems \ref{Theo Regular embedding} and \ref{Theo diagonal} imply that $\pi^{\wt{\lambda}}\ne 0$ if and only if one of the following two cases occurs.

Case 1) $s_{\wt{\alpha}_1}\cdot\wt{\lambda}\in\wt{\mc P}^+$ and $s_{\alpha}\cdot\lambda\in{\mc P}^+$. One can easily compute that in coordinates this translates to
\begin{align*}
\wt{a}_1 & \leq \; -2 \\
\wt{a}_2 & \geq \; 1-\wt{a}_1 \\
0 \leq \wt{a}_3 & \leq \; -2-\wt{a}_1 \;.
\end{align*}
When these inequalities are satisfied we have a cohomological component $V_{\mk{sl}_2}(-\wt{a}_1-\wt{a}_3-2)$ in $V_{\mk{sl}_4}(-\wt{a}_1-2,\wt{a}_1+\wt{a}_2+1,\wt{a}_3)$.

Case 2) $s_{\wt{\alpha}_3}\cdot\wt{\lambda}\in\wt{\mc P}^+$ and $s_{\alpha}\cdot\lambda\in{\mc P}^+$, which translates to
\begin{align*}
0\leq \wt{a}_1 & \leq \; -2-\wt{a}_3 \\
\wt{a}_2 & \geq \; 1-\wt{a}_3 \\
\wt{a}_3 & \leq \; -2 \;.
\end{align*}
When these inequalities are satisfied we have a cohomological component $V_{\mk{sl}_2}(-\wt{a}_1-\wt{a}_3-2)$ in $V_{\mk{sl}_4}(\wt{a}_1,\wt{a}_2+\wt{a}_3+1,-\wt{a}_3-2)$.
\end{ex}

\begin{ex} Here we consider the same inclusion $\mk{sl}_2 \stackrel{\iota_1}{\hookrightarrow} \mk{sl}_2\oplus\mk{sl}_2 \stackrel{\iota_2}{\hookrightarrow} \mk{sl}_4$ as in the previous example, but we associate to it a different embedding of flag manifolds, by choosing different Borel subalgebras. We take ${\mk b}$ and ${\mk b}_1$ as before, but $\wt{\mk b}\subset\mk{sl}_4$ is defined to consist of the matrices of the form
$$
\left[\begin{array}{cccc} * & * & 0 & * \\
                          0 & * & 0 & * \\
                          * & * & * & * \\
                          0 & 0 & 0 & *
\end{array}\right] \;.
$$
Let $\wt{\alpha}_1,\wt{\alpha}_2,\wt{\alpha}_3$ be the simple roots of $\mk{sl}_4$. Then we see that the only simple root of $\mk{sl}_2$ for which $\iota^*(\wt{\alpha}_j)=\alpha$ is $\wt{\alpha}_2$. Let $\wt{\lambda}=(\wt{a}_1,\wt{a}_2,\wt{a}_3)\in\wt{\mc P}$ have length 1. As in the previous example we see that $\pi^{\wt{\lambda}}\ne 0$ if and only if $\wt{s}_{\wt{\alpha}_2}\cdot\wt{\lambda}\in\wt{\mc P}^+$ and $s_{\alpha}\cdot\lambda\in{\mc P}^+$. These conditions yield the following inequalities:
\begin{align*}
\wt{a}_1+\wt{a}_2+1 & \geq \; 0 \\
-\wt{a}_2-2 & \geq \; 0 \\
\wt{a}_2+\wt{a}_3+1 & \geq \; 0 \\
-\wt{a}_1-\wt{a}_3-2 & \geq \; 0 \;.
\end{align*}
It is easy to see that this system is satisfied only for $\wt{\lambda}=-\wt{\alpha}_2=\wt{s}_{\wt{\alpha}_2}\cdot 0$. We can conclude that there is a single weight giving a nonzero cohomological pullback in degree 1, and this pullback is
$$
\pi^{-\wt{\alpha}_2}:\CC \lw \CC \;.
$$
\end{ex}

\subsection{The adjoint representation and invariant polynomials}\label{Sec Invariants}

Let ${\mk g}$ be a semisimple complex Lie algebra and $G$ be the associated connected simply connected complex Lie group. Let $\CC[\mk g]$ denote the coordinate ring of ${\mk g}$. It is a well-known classical result that the ad-invariant polynomials on ${\mk g}$ form a $\CC$-algebra which is isomorphic to a polynomial algebra on $\ell$ variables, where $\ell=rank(\mk g)$, see e.g. \cite{Varadarajan-Invar}. More precisely, there exist homogeneous polynomials $p_1,...,p_\ell\in\CC[\mk g]$ such that
$$
{\mk I} = \CC[\mk g]^G = \CC[p_1,...,p_\ell] \;.
$$
The polynomials $p_1,...,p_j$ are not uniquely determined, but their degrees are. Let $d_j={\rm deg}p_j$. These integers are numerical invariants of ${\mk g}$ and are important in various contexts, see e.g. \cite{Kostant-PTDS}. The numbers $d_1,...,d_j$ are all distinct if ${\mk g}$ is simple. For a semisimple ${\mk g}$ one obtains the totality of degrees for the simple factors, so some numbers may occur with multiplicity. Here is the list of the sets $D=\{d_1,...,d_\ell\}$ for the simple Lie algebras:
\begin{gather*}
\begin{array}{ll}
D(A_\ell) = \{2,3,...,\ell+1\}         & ,\qquad D(G_2) = \{ 2,6 \} \;, \\
D(B_\ell) = \{ 2,4,...,2\ell \}        & ,\qquad D(F_4) = \{ 2,6,8,12 \} \;, \\
D(C_\ell) = \{ 2,4,...,2\ell \}        & ,\qquad D(E_6) = \{ 2,5,6,8,9,12 \} \;, \\
D(D_\ell) = \{ 2,4,...,2\ell-2,\ell \} & ,\qquad D(E_7) = \{ 2,6,8,10,12,14,18 \} \;, \\
                                       & \;\qquad D(E_8) = \{ 2,8,12,14,18,20,24,30 \} \;.
\end{array}
\end{gather*}

Note that the adjoint representation is self-dual so that $\CC[\mk g]=S({\mk g}^*)\cong S(\mk g)$ as ${\mk g}$-modules, and an explicit isomorphism can be obtained using the Killing form. We choose to work with $S(\mk g)$ instead of $S({\mk g}^*)$, so we shall think of $p_1,...,p_\ell$ as elements of $S(\mk g)$ from now on. It is not hard to determine the freedom available for the choice of each generator $p_j$. Obviously scaling is possible and, for types $A_1$ and $A_2$, this is the only freedom. For all other types there are other possible alterations. For instance, for type $A_3$, i.e. ${\mk g}=\mk{sl}_4$, we have $d_1=2, d_2=3, d_3=4$. The degree components of the first two generators are one dimensional, i.e. ${\rm dim}S^2(\mk g)^G={\rm dim}S^3(\mk g)^G=1$. Thus $p_1$ and $p_2$ are determined up to scaling. However, ${\rm dim}S^4(\mk g)^G = 2$ as this space contains $p_1^2$ along with a third generator of ${\mk I}$. Thus $p_3$ is determined up to a summand proportional to $p_1^2$. The general pattern is analogous.

The goal in this section is to show that there exist generators for ${\mk J}$, which may be realized as cohomological components in the spaces $S^{d_1}(\mk g),...,S^{d_\ell}(\mk g)$ respectively. Of course, we need to specify a suitable equivariant embedding of flag manifolds.

Set $\wt{\mk g}=\mk{sl}(\mk g)$ and let $\iota={\rm ad}:{\mk g}\hookrightarrow\wt{\mk g}$ be the inclusion given by the adjoint representation. Note that the degree components $S^d(\mk g)$ are finite dimensional irreducible $\wt{\mk g}$-modules, the symmetric powers of the natural representation, and their highest weights lie along the first fundamental ray of the dominant Weyl chamber. Then $\CC p_j$ is an irreducible ${\mk g}$-submodule in $S^{d_j}(\mk g)$.

\begin{theorem}\label{Theo Invariants}
There exist Borel subalgebras ${\mk b}\subset{\mk g}$ and $\wt{\mk b}\subset\wt{\mk g}$ with ${\mk b}\subset\wt{\mk b}$ such that, for a suitable choice of $p_j$, the ${\mk g}$-submodule $\CC p_j\subset S^{d_j}(\mk g)$ is a cohomological component relative to the embedding of complete flag manifolds $G/B\hookrightarrow\wt{G}/\wt{B}$.

More precisely, the pullback $\pi^{\wt{\lambda}}$ realizing this component is in cohomology of highest possible degree, which is $r={\rm dim}(G/B)=\#\Delta^+$, the restricted weight is $\lambda=\iota^*\wt{\lambda}=\langle\Delta^+\rangle$, and the resulting sheaf ${\mc O}(\lambda)$ is the canonical sheaf on $G/B$.
\end{theorem}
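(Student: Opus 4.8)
The plan is to produce, for every $j$, a pullback $\pi^{\wt\lambda_j}$ living in top degree $r:=\#\Delta^+=\dim(G/B)$ on $G/B$, and to verify the two conditions of theorem \ref{Theo Necessity}. Recall $\wt{\mk g}=\mk{sl}(\mk g)$, $\iota={\rm ad}$, $N=\dim\mk g$, and that $S^{d}(\mk g)=\wt V(d\wt\omega_1)$ is the $d$-th symmetric power of the natural $\wt G=SL(\mk g)$-module $\CC^N=\mk g$, with $\wt\omega_1$ the first fundamental weight of $\mk{sl}_N$. First I would fix a regular element $h_0\in\mk q\cap\mk h$ (so $\ol{h_0}=h_0$), to be specialized at the end, and build an ${\rm ad}(\mk b)$-stable complete flag on $\mk g$ whose first $r$ terms span $\mk n$, whose $(r{+}1)$-st term is $\mk n\oplus\CC h_0$, whose next $\ell-1$ terms complete $\mk b$, and which afterwards adjoins negative root vectors one at a time; this is routine, each term being a sum of a closed family of root spaces together with part of $\mk h$. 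Let $\wt{\mk b}$ be the stabilizer of this flag; then $\iota(\mk b)\subset\wt{\mk b}$, $\iota(\mk h)\subset\wt{\mk h}$, so the setup of section \ref{Sec Maps of Flags} applies. Write $x_1,\dots,x_N$ for the corresponding weight basis, with $x_1,\dots,x_r$ the positive root vectors and $x_{r+1}=h_0$; the roots of $\wt{\mk g}$ are $\epsilon_p-\epsilon_q$ ($p\ne q$) and $\iota^*(\epsilon_p-\epsilon_q)$ is the difference of the $\mk h$-weights of $x_p$ and $x_q$.

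Next I would take $\wt w\in\wt{\mc W}=S_N$ to be the permutation $\wt w(i)=i+1$ for $1\le i\le r$, $\wt w(r{+}1)=1$, fixing everything else; a direct check gives $l(\wt w)=r$, $\Phi_{\wt w}=\{\epsilon_i-\epsilon_{r+1}:1\le i\le r\}$, and $\wt w^{-1}(1)=r{+}1$. For condition (i): for $1\le i\le r$ and $\xi=\sum_{\alpha\in\Delta^+}c_\alpha e_\alpha\in\mk n$, the value of $\varphi_o^*(\wt e^*_{-(\epsilon_i-\epsilon_{r+1})})$ on $\xi$ is the $(i,r{+}1)$-entry of the matrix ${\rm ad}(\xi)$, i.e. the coefficient of $x_i$ in $[\xi,h_0]=-\sum_\alpha\alpha(h_0)c_\alpha e_\alpha$; writing $x_i=e_{\alpha^{(i)}}$ this equals $-\alpha^{(i)}(h_0)c_{\alpha^{(i)}}$, a nonzero multiple of $e^*_{-\alpha^{(i)}}$ because $h_0$ is regular. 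As $i$ varies the $\alpha^{(i)}$ exhaust $\Delta^+$, so these $r$ functionals form a basis of $\mk n^*$, and hence $\varphi_o^*(\wt e^*_{-\Phi_{\wt w}})$ is a nonzero multiple of $e^*_{-\Delta^+}=e^*_{-\Phi_{w_0}}$, where $w_0\in\mc W$ is the longest element; this is condition (i). Also, since $\langle\Phi_{\wt w}\rangle=\sum_{i=1}^r(\epsilon_i-\epsilon_{r+1})$, one has $\iota^*\langle\Phi_{\wt w}\rangle=\sum_{\alpha\in\Delta^+}\alpha=\langle\Delta^+\rangle$.

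Then I would set $\wt\lambda=\wt\lambda_j:=\wt w^{-1}\cdot(d_j\wt\omega_1)$. Since $d_j\wt\omega_1+\wt\rho$ is strictly dominant, $\wt\lambda_j$ is regular with $\wt w_{\wt\lambda_j}=\wt w$, so Borel--Weil--Bott gives $H^r(\wt X,\wt{\mc O}(\wt\lambda_j))\cong S^{d_j}(\mk g)^*$. From $\wt\lambda_j=\wt w^{-1}(d_j\wt\omega_1)-\langle\Phi_{\wt w}\rangle$, together with $\iota^*\big(\wt w^{-1}(d_j\wt\omega_1)\big)=0$ (this weight is represented by the $\mk h$-weight of $x_{r+1}=h_0$, which is $0$, the trace part being killed by $\iota^*$ because the roots of $\mk g$ occur in opposite pairs) and $\iota^*\langle\Phi_{\wt w}\rangle=\langle\Delta^+\rangle$, one obtains $\lambda:=\iota^*\wt\lambda_j=w_0\cdot 0$, the weight of the canonical sheaf of $G/B$; thus $\lambda$ is regular, $w_\lambda=w_0$, $l(\lambda)=r=\dim(G/B)$, and $H^r(X,\mc O(\lambda))\cong V(0)^*=\CC$. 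Moreover the extreme weight vector $\wt v^{\wt\lambda+\langle\Phi_{\wt w}\rangle}=\wt v^{\wt w^{-1}(d_j\wt\omega_1)}$ spans the one-dimensional weight space of weight $d_j\epsilon_{r+1}$ in $S^{d_j}(\mk g)$, namely $x_{r+1}^{d_j}=h_0^{d_j}$. Condition (ii) therefore asks that $\mk U(\mk g)h_0^{d_j}$ contain the trivial module. By the orthogonality argument in the proof of theorem \ref{Theo Necessity}, using the $\wt K$-invariant Hermitian form on $S^{d_j}(\mk g)$ induced from $\{x,y\}=\kappa(x,\ol y)$, this holds iff $h_0^{d_j}$ is not orthogonal to $S^{d_j}(\mk g)^{\mk g}$; via the Killing identification $S^{d}(\mk g)\cong S^{d}(\mk g^*)$ and $\ol{h_0}=h_0$ this pairing is proportional to evaluation of an invariant polynomial at $h_0$, so (ii) says some degree-$d_j$ invariant is nonzero at $h_0$, which holds off a proper subvariety of $\mk q\cap\mk h$ by Chevalley's restriction theorem (no nonzero $\mk g$-invariant polynomial vanishes on $\mk h$). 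Then $\pi^{\wt\lambda_j}\ne0$ and its dual realizes the cohomological component $\CC q_j(h_0)\subset S^{d_j}(\mk g)$, where $q_j(h_0)$ is the orthogonal projection of $h_0^{d_j}$ onto $S^{d_j}(\mk g)^{\mk g}$; Chevalley's theorem also shows no nonzero invariant is orthogonal to all the $h_0^{d_j}$, so the $q_j(h_0)$ span $S^{d_j}(\mk g)^{\mk g}$, whence $q_j(h_0)$ lies outside the proper subspace of decomposable invariants for generic $h_0$ and may be taken as $p_j$. Finally I would pick $h_0$ in the dense open locus of $\mk q\cap\mk h$ where it is regular and these genericity conditions hold simultaneously for all $j$, so that a single pair $\mk b\subset\wt{\mk b}$ serves for all $j$.

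The hard part is the bookkeeping in theorem \ref{Theo Necessity}: one must place $h_0$ immediately after the positive-root part of the flag and take $\wt w$ to be precisely the cycle above, so that the extreme weight vector attached to $\wt w$ is $h_0^{d_j}$ (a power of a Cartan vector) and not some $x_p^{d_j}$ with $x_p$ a root vector --- it is exactly this that forces $\iota^*\wt\lambda_j$ to be the canonical weight and the target $G$-module to be trivial, and naive orderings of the Cartan block, or a non-regular $h_0$, break either condition (i) or this identification. Once the flag and $\wt w$ are fixed correctly, condition (i) is immediate from the regularity of $h_0$, and condition (ii), as well as the identification of the component as a bona fide generator of $\CC[\mk g]^G$, follows from Chevalley's restriction theorem.
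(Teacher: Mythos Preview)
Your proposal is correct and follows essentially the same route as the paper: the same $\mk b$-stable flag (positive root vectors, then a distinguished Cartan vector $h_0$, then the rest of $\mk h$, then negative root vectors), the same Weyl group element $\wt w$ (the $(r{+}1)$-cycle), the same verification of condition (i) via the matrix entries of ${\rm ad}(\xi)$ in the $(r{+}1)$-st column, and the same identification of the extreme weight vector as $h_0^{d_j}$, reducing condition (ii) to non\-vanishing of an invariant polynomial at $h_0$.

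Two minor differences are worth noting. First, you work with the Hermitian form and therefore take $h_0\in\mk q\cap\mk h$ so that $\ol{h_0}=h_0$; the paper instead uses the dual-pairing formulation of condition (ii) (an invariant $f\in S^{d_j}(\mk g)^*$ with $f(h_1^{d_j})=d_j!\,f(h_1)\ne0$), which avoids any reality constraint on $h_1$. Both are equivalent. Second, your argument that the resulting cohomological component $\CC q_j(h_0)$ can actually be taken as a \emph{generator} $p_j$ --- via the observation that the projections $q_j(h_0)$ span $S^{d_j}(\mk g)^{\mk g}$ as $h_0$ varies, hence generically avoid the subspace of decomposable invariants --- is more explicit than the paper's, which simply asserts that the images generate $\mk I$. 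Your extra care here is warranted (and for semisimple $\mk g$ with repeated exponents one should phrase the genericity a bit more carefully, but the idea is sound). Conversely, the paper includes a computation you omit: it shows that for this pair $(w_o,\wt w)$ the \emph{only} dominant $\wt\mu$ with $w_o(\iota^*(\wt w^{-1}\wt\mu))\in\mc P^+$ are the multiples of $\wt\omega_1$, so the trivial components in $S^k(\mk g)$ are exactly the cohomological components attached to this pair.
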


\begin{proof}
We start with the definition of the nested Borel subalgebras. We fix a Cartan and a Borel subalgebras ${\mk h}\subset{\mk b}\subset{\mk g}$, and refer to Section \ref{Sec Preliminar} for the notions and notation related to such a pair. The choice of ${\mk h}$ and ${\mk b}$ is not restricted since all such pairs are conjugate under $G$. The subtlety is to find a suitable $\wt{\mk b}$. In fact we shall consider a family of Borel subalgebras of $\wt{\mk g}$ and choose the appropriate member at the end of the proof. The Borel subalgebras of $\wt{\mk g}$ are in a bijection with the complete flags in the natural representation of $\wt{\mk g}$. In turn the natural representation of $\wt{\mk g}$ coincides, as a vector space, with ${\mk g}$. The condition that ${\mk b}\subset\wt{\mk b}$ means that the flag ${\mc Fl}_{\wt{\mk b}}$ corresponding to $\wt{\mk b}$ must be stable under the (adjoint) action of ${\mk b}$. We shall consider a family of flags in ${\mk g}$ parametrized by the manifold ${\mc Fl}({\mk h})$ of complete flag in the Cartan subalgebra ${\mk h}$. In our result we shall care mainly about the projection of ${\mc Fl}(\mk h)$ onto the projective space $\PP(\mk h)$ of one-dimensional subspaces of ${\mk h}$.

{\it Definition of the flag ${\mc Fl}(y)$ for $y\in{\mc Fl}(\mk h)$}. We shall define the flag ${\mc Fl}(y)$ by an ordered basis of ${\mk g}$ consisting of root vectors and a basis of ${\mk h}$. The flag ${\mc Fl}(y)$ shall be a completion of the partial flag in ${\mk g}$ given by the triangular decomposition ${\mk g}={\mk m}\oplus{\mk h}\oplus{\mk m}^-$. {\it Fix} {\it any} order $\beta_1,...,\beta_r$ on the set of positive roots $\Delta^+$ such that the corresponding flag in ${\mk m}$ is fixed by the action of ${\mk b}$ (one such order can be obtained by first fixing a partial order according to the heights of the roots and then ordering the sets of equal heights arbitrarily). Let $h_1,...,h_\ell$ be {\it any} ordered basis of ${\mk h}$, and denote the corresponding flag in ${\mk h}$ by $y=y(h_1,...,h_\ell)$. Now consider the ordered basis
\begin{gather}\label{For basis in invar}
e_{\beta_1},...,e_{\beta_r},h_1,...,h_\ell,e_{-\beta_r},...,e_{-\beta_1} \;,
\end{gather}
to be denoted by $v_1,...,v_n$ (with $n=\ell+2r$). Let ${\mc Fl}(y)$ be the corresponding complete flag in ${\mk g}$, and let $\wt{\mk b}=\wt{\mk b}(y)$ be the Borel subalgebra of $\wt{\mk g}$ stabilizing ${\mc Fl}(y)$. It is clear that ${\mk b}\subset\wt{\mk b}$.

The basis (\ref{For basis in invar}) defines a Cartan subalgebra $\wt{\mk h}\subset\wt{\mk g}$ consisting of the elements which are diagonal in this basis, as well as a set of simple roots corresponding to the order. Fix a compact real form ${\mk k}\subset{\mk g}$. Since the adjoint representation is irreducible, there is a unique compact real form $\wt{\mk k}\subset\wt{\mk g}$ containing ${\mk k}$. We now make the additional assumption that the basis $h_1,...,h_\ell$ is orthogonal with respect to the unique Hermitian form on ${\mk g}$ which is ${\mk k}$-invariant. Note that this assumption does not limit the choice of $y\in{\mc Fl}(\mk h)$, because every flag can be obtained from an orthogonal ordered basis. Furthermore, we may assume that the root vectors $e_\alpha$ satisfy the relations (\ref{For OrthRel e_alf}), and with this assumption the basis (\ref{For basis in invar}) is orthogonal. This ensures that we have $\wt{\mk h}=\wt{\mk t}\oplus i\wt{\mk t}$, where $\wt{\mk t}=\wt{\mk k}\cap\wt{\mk b}$. We now have everything in place, ready to apply Theorem \ref{Theo Necessity}.

So far we have fixed a basis for the natural representation of $\wt{\mk g}$, as well as the corresponding Cartan subalgebra $\wt{\mk h}$ consisting of diagonal matrices and Borel subalgebra $\wt{\mk b}$ consisting of upper-triangular matrices. Before proceeding further with the proof let us recall some commonly used notation for the roots and Weyl group elements of $\mk{sl}_n$. For $j,k\in\{1,...,n\}$ let $E_{j,k}$ denote the elementary matrix having 1 at the $(j,k)$-th entry and zeros elsewhere. Then $\{ E_{j,k}:j\ne k \}$ is a complete set of root vectors for $\wt{\mk g}$ with respect to $\wt{\mk h}$. Let $\wt{\alpha}_{j,k}$ denote the root corresponding to $E_{j,k}$ for $j\ne k$. Then the simple roots of $\wt{\mk b}$ are $\wt{\alpha}_{j,j+1}$ for $j\in\{1,...,n-1\}$. Let $\wt{s}_1,...,\wt{s}_{n-1}$ denote the corresponding simple reflections generating $\wt{\mc W}$.

Let us exhibit an element $\wt{w}\in\wt{\mc W}$ such that $\varphi_o^*(\wt{e}^*_{-\Phi_{\wt{w}}}) = a e^*_{-\Delta^+}$. Recall that, since we are considering an embedding of complete flag manifolds, we have $\iota=\varphi_o$. The image of $e_{\Delta^+}$ under $\iota$ can be written as a linear combination of simple tensors:
\begin{gather*}
\iota(e_{\Delta^+}) = \sum\limits_{\wt{\Phi}\subset\wt{\Delta}^+} c_{\wt{\Phi}} \wt{e}_{\wt{\Phi}} \;.
\end{gather*}
We are looking for $\wt{w}$ such that $c_{\Phi_{\wt{w}}} \ne 0$. Notice that, for every $j\in\{1,...,r\}$ we have
$$
\iota(e_{\beta_j})v_{r+1} = [e_{\beta_j},h_1] = -\beta_j(h_1)e_{\beta_j} = -\beta_j(h_1) v_j \;.
$$
Put $\wt{\Phi} = \{ \wt{\alpha}_{j,r+1} : 1\leq j \leq r \} \subset\wt{\Delta}^+$. Then we have
\begin{gather*}
c_{\wt{\Phi}} = \prod\limits_{j=1}^r -\beta_j(h_1) \;.
\end{gather*}
Assume now that $h_1$ is regular, so that $c_{\wt{\Phi}}\ne 0$. The set $\wt{\Phi}$ is in fact the inversion set $\Phi_{\wt{w}}$ of the Weyl group element
$$
\wt{w} = \wt{s}_1\wt{s}_2 \dots \wt{s}_r \;.
$$
Hence we have
\begin{gather}\label{For i for invar}
\varphi_o^*(\wt{e}^*_{-\Phi_{\wt{w}}}) = c_{\wt{\Phi}} e^*_{-\Delta^+} \;.
\end{gather}
The second step is to determine all dominant weights $\wt{\mu}\in\wt{\mc P}^+$ such that $w_o\cdot(\iota^*(\wt{w}^{-1}\cdot\wt{\mu}))\in{\mc P}^+$. In the presence of (\ref{For i for invar}) it is equivalent to working with the linear action of the Weyl groups and considering $w_o(\iota^*(\wt{w}^{-1}(\wt{\mu})))$. For the computations it is more convenient to write the weights of $\wt{\mk g}$ as ``$gl$-weights'', i.e. $\wt{\lambda}=(\wt{\lambda}_1,...,\wt{\lambda}_n)$ (then the dominant ones are those whose components are non-increasing integers). In these terms, the restriction of weights is written as
$$
\iota^*(\wt{\lambda}_1,...,\wt{\lambda}_n) = \wt{\lambda}_1\beta_1 + \dots + \wt{\lambda}_r\beta_r - \wt{\lambda}_{n-r+1}\beta_r - \dots - \wt{\lambda}_n\beta_1 \;.
$$
Now, we take $\wt{\mu}=(\wt{\mu}_1,...,\wt{\mu}_n)\in\wt{\mc P}^+$ and compute
\begin{align*}
\iota^*(\wt{w}^{-1}(\wt{\mu})) & = \iota^* (\wt{\mu}_2,\wt{\mu}_3,...,\wt{\mu}_{r+1},\wt{\mu}_1,\wt{\mu}_{r+2},...,\wt{\mu}_n) \\
                               & = \wt{\mu}_2\beta_1 + \dots + \wt{\mu}_{r+1}\beta_r - \wt{\mu}_{n-r+1}\beta_r - \dots - \wt{\mu}_n\beta_1 \;.
\end{align*}
We are interested to know when $\mu=w_o(\iota^*(\wt{w}^{-1}(\wt{\mu})))$ is dominant. It is dominant if and only if $w_o\mu=\iota^*(\wt{w}^{-1}(\wt{\mu}))$ is anti-dominant; thus we do not need to know the exact form of $w_o$. For $w_o\mu$ to be anti-dominant, it is necessary that its inner product with any strictly dominant weight $\nu$ be non-positive. Let $\nu$ be strictly dominant. We have
\begin{gather*}
(\nu,w_o\mu) = (\wt{\mu}_2-\wt{\mu}_n)(\nu,\beta_1) + \dots + (\wt{\mu}_{r+1}-\wt{\mu}_{n-r+1})(\nu,\beta_r) \;.
\end{gather*}
Since $(\nu,\beta_j)>0$ and $\wt{\mu}_{j+1}-\wt{\mu}_{n-j+1} \geq 0$ for all $j\in\{1,...,r\}$, it follows that $(\nu,w_o\mu)\leq 0$ if and only if $(\nu,w_o\mu)=0$ if and only if $\wt{\mu}_2=\dots=\wt{\mu}_n$ if and only if $\wt{\mu}=k\wt{\omega}_1$. (Here $\wt{\omega}_1$ is the first of the fundamental weights $\wt{\omega}_1,...,\wt{\omega}_{n-1}$ of $\wt{\mk g}$.) In such a case, clearly $\mu=0$. To summarize, for $\wt{\mu}\in\wt{\mc P}^+$, we have
\begin{gather*}
\mu=w_o\iota^*(\wt{w}^{-1}(\wt{\mu})) \in {\mc P}^+ \tst \wt{\mu}=k\wt{\omega}_1 \; for \; some \; k\geq 0 \;,
\end{gather*}
and
\begin{gather*}
\mu=w_o\iota^*(\wt{w}^{-1}(\wt{\mu})) \in {\mc P}^+ \;\Lw\; \mu=0 \;.
\end{gather*}

We can conclude that the only possible cohomological components associated with the fixed embedding $\varphi$ and pair of Weyl group elements $\wt{w},w_o$ are trivial $G$-modules included in symmetric powers of the natural representation of $\wt{G}$, i.e. ad-invariant polynomials on ${\mk g}^*$. Specifically, for $k\geq 0$ and $\wt{\lambda}=\wt{w}^{-1}\cdot(k\wt{\omega}_1)$, we have
$$
\pi^{\wt{\lambda}} : S^k(\mk g)^* \lw \CC \;.
$$
Now we have come to verifying condition (ii) of Theorem \ref{Theo Necessity}. The relevant extreme weight vector $\wt{v}=\wt{v}^{\wt{w}^{-1}(\wt{\mu})}$ in this case equals $\wt{v}=v_{r+1}^k=h_1^k\in S^k(\mk g)$. We are looking for a trivial $G$-submodule of $S^{k}(\mk g)^*$ whose non-zero elements do not vanish at $\wt{v}$ as linear functionals. Notice that, for $f\in S^k(\mk g)^*$, we have
$$
f(\wt{v}) = k! f(h_1) \;,
$$
where on the right hand side we consider $f$ as a polynomial function on ${\mk g}$. Thus $f(\wt{v})\ne 0$ if and only of $f(h_1)\ne 0$. Recall that $h_1$ is already restricted to be outside the root hyperplanes. Now, let $Z\subset{\mk h}$ be the Zariski closed set defined by the vanishing of the roots and all nonconstant ad-invariant polynomials. Then, if we choose $h_1 \notin Z$ we get $\pi^{\wt{\lambda}} \ne 0$ for all $k$ such that $S^k(\mk g)^G\ne 0$. Furthermore, the images of $(\pi^{\wt{\lambda}})^*$ for $k=d_1,...,d_\ell$ generate ${\mk I}$. This completes the proof of the theorem.
\end{proof}

\begin{zab}\label{Zab Invar paramet}
It is evident from the proof of Theorem \ref{Theo Invariants} that the resulting cohomological components vary with $h_1$. Several questions arise. For instance: is it true that ${\rm every}$ set of generators for ${\mk J}$ can be obtained for a suitable $h_1$? Or, since $h_1$ may be chosen so that so the the generators vanish on it and the associated cohomological components generate a proper subalgebra of ${\mk J}$, one could ask: which subalgebras can be obtained in this way? To answer these questions one needs relevant knowledge about the zero-loci of generators of ${\mk J}$.
\end{zab}

\begin{zab}\label{Zab Invar i&ii indep}
The construction described in the proof of Theorem \ref{Theo Invariants} can be used to show that conditions {\rm (i)} and {\rm (ii)} of Theorem \ref{Theo Necessity} are independent in general. Namely, if the element $h_1$ is chosen in the zero locus of $p_2$ but not on a root hyperplane, we obtain a map $\pi^{\wt{w}^{-1}\cdot 2\wt{\omega}_1}$ for which {\rm (i)} is satisfied while {\rm (ii)} isn't. If $h_1$ is chosen on a root hiperplane, but not in the zero locus of $p_2$, then $\pi^{\wt{w}^{-1}\cdot 2\wt{\omega}_1}$ satisfies {\rm (ii)} but not {\rm (i)}.
\end{zab}

\begin{zab}\label{Zab Invar&Monoid}
Along the lines of the preceding remarks we obtain diverse examples for the monoid ${\mc C}_{w,\wt{w}}$ of cohomological pairs of dominant weights studied in Section \ref{Sec PropCohCom} (see also Remark \ref{Zab Semigroups}). Here $w=w_o$ and $\wt{w}$ are as in the proof of the above theorem. We see that ${\mc D}_{w_o,\wt{w}}=\{ (0,m\wt{\omega}_1) : m\in\ZZ_{\geq 0} \}$. Since $S^{1}(\mk g)$ does not contain invariants, $(0,\wt{\omega}_1)\notin {\mc C}_{w_o,\wt{w}}$, and hence ${\mc D}_{w_o,\wt{w}}\setminus {\mc C}_{w_o,\wt{w}}\ne\varnothing$. This complement can be finite or infinite. Indeed, let ${\mk g}=\mk{sl}_3$. If $h_1$ is in a generic position, we obtain ${\mc C}_{w_o,\wt{w}} = {\mc D}_{w_o,\wt{w}}\setminus(0,\wt{\omega}_1)$. If $h_1$ is not on a root hyperplane but in the zero locus of $p_2$, respectively $p_3$, we obtain ${\mc C}_{w_o,\wt{w}}=\{(0,m\wt{\omega}_1) : m\equiv 0\; mod\; 2\}$, respectively ${\mc C}_{w_o,\wt{w}}=\{(0,m\wt{\omega}_1) : m\equiv 0\; mod\; 3\}$. In the last two cases, we observe that the number $k$ from Theorem \ref{Theo C_w,wt(w) monoid}, for which $k{\mc D}_{w_o,\wt{w}}\subset{\mc C}_{w_o,\wt{w}}$, equals respectively $2$ and $3$. Hence this number is not determined by the embedding $\iota$ of Lie algebras, but really depends on the choice of Borel subalgebras, i.e. depends on the embedding $\varphi$ of flag manifolds.
\end{zab}

\section*{Acknowledgements}

The work presented here is part of my thesis written under the supervision of Ivan Dimitrov, to whom I use this opportunity to express my deep gratitude and admiration. I am also grateful to Michael Roth, for his insightful lectures and for many useful discussions. I would like to thank Vera Serganova for pointing out a mistake in an earlier version of the text. The research and exposition were done in the Graduate School of Queen's University, and partially supported by an OGS scholarship. The final revision of the text was done at Ruhr-Universit\"at, Bochum, with the support of the Deutsche Forschungsgemeinschaft, SFB/TR12.

\end{document}